\numberwithin{equation}{section}
\numberwithin{figure}{section}
\newlength{\lyxlistindent}      
\theoremstyle{plain}
\newtheorem{thm}{\protect\theoremname}
  \theoremstyle{definition}
  \newtheorem{defn}[thm]{\protect\definitionname}
  \theoremstyle{remark}
  \newtheorem{rem}[thm]{\protect\remarkname}
  \theoremstyle{definition}
  \newtheorem{example}[thm]{\protect\examplename}
  \theoremstyle{plain}
  \newtheorem{cor}[thm]{\protect\corollaryname}
  \theoremstyle{plain}
  \newtheorem{lem}[thm]{\protect\lemmaname}
  \theoremstyle{plain}
  \newtheorem*{fact*}{\protect\factname}
\newcommand{\N}{\ensuremath{\mathbb{N}}}
\newcommand{\Z}{\ensuremath{\mathbb{Z}}}
\newcommand{\Q}{\ensuremath{\mathbb{Q}}}
\newcommand{\R}{\ensuremath{\mathbb{R}}}
\newcommand{\Co}{\ensuremath{\mathbb{C}}}
\DeclareMathOperator*{\colim}{colim}
\newcommand{\ra}{\longrightarrow}
\newcommand{\F}{\mathcal{F}}
\newcommand{\D}{\mathcal{D}}
\newcommand{\M}{\mathcal{M}}
\newcommand{\mC}{\mathcal{C}}
\newcommand{\supp}{\text{supp}}
\newcommand{\Coo}{\mbox{\ensuremath{\mathcal{C}}}^{\infty}}
\newcommand{\Diff}{{\bf Dlg}}
\newcommand{\Fr}{{\bf Fr}}
\newcommand{\HFr}{{\bf HFr}}
\newcommand{\LCS}{\text{\bf LCS}}
\newcommand{\Con}{\text{\bf Con}}
\newcommand{\ConInf}{\Con{}^\infty}
\newcommand{\CSh}{{\mathfrak{C}\mathrm{Sh}}}
\newcommand{\CPre}{{\mathfrak{C}\mathrm{Pre}}}
\newcommand{\Pre}{{\mathfrak{P}\mathrm{re}}}
\newcommand{\Sh}{{\mathfrak{S}\mathrm{h}}}
\newcommand{\xyR}[1]{ \makeatletter
\xydef@\xymatrixrowsep@{#1} \makeatother} 
\newcommand{\xyC}[1]{ \makeatletter
\xydef@\xymatrixcolsep@{#1} \makeatother} 
\newcommand{\OR}{{\mathcal{O}}\mbox{$\R^\infty$}}
\newcommand{\In}[1]{\in_{_{\scriptscriptstyle{#1}}}} % \in for generalized elements
\newcommand{\Man}{{\bf Man}}
\newcommand{\Set}{{\bf Set}}
\newcommand{\Top}{{\bf Top}}
\newcommand{\Fgen}{{\bf F}\Diff}
\newcommand{\TD}{\text{\rm T}_\text{\rm D}} % functor \Diff -> \Top "topological from diffeol"
\newcommand{\DT}{\text{\rm D}_\text{\rm T}} % functor \Top -> \Diff "diffeol from topological"
\newcommand{\DF}{\text{\rm D}_\text{\rm F}} % functor \Fr -> \Diff "diffeol from Frolicher"
\newcommand{\FD}{\text{\rm F}_\text{\rm D}} % functor \Diff -> \Fr "Frolicher from diffeol"
\newcommand{\FGF}{\text{\rm FG}_\text{\rm F}} % functor \Fr -> \Fgen "Fgen from Frolicher"
\newcommand{\DFG}{\text{\rm D}_\text{\rm FG}} % functor \Fgen -> \Diff "diffeol from Fgen"
\newcommand{\FGD}{\text{\rm FG}_\text{\rm D}} % functor \Diff -> \Fgen "Fgen from diffeol"
\newcommand{\FGM}{\text{\rm FG}_\text{\rm M}} % functor \Man -> \Fgen "Fgen from man"
\newcommand{\ext}[1]{{}^\bullet #1} % extension functor
\newcommand{\Fer}{\ext{\Diff}} %category of Fermat spaces
\newcommand{\FR}{{^\bullet\R}} % Fermat reals
\newcommand{\Rtil}{\widetilde \R} % real Colombeau generalized number
\newcommand{\otilc}{\widetilde \Omega_c}
\newcommand{\eps}{\varepsilon}
\renewcommand{\phi}{\varphi} 
\newcommand{\gs}{\mathcal{G}^s}
\newcommand{\Gs}{\boldsymbol{\mathcal{G}}^s}
\newcommand{\gf}{\mathcal{G}^e}
\newcommand{\Gf}{\boldsymbol{\mathcal{G}}^e}
\newcommand{\ems}{\mathcal{E}^s_M}
\newcommand{\Ems}{\boldsymbol{\mathcal{E}}^s_M}
\newcommand{\ns}{\mathcal{N}^s}
\newcommand{\emf}{\mathcal{E}^e_M}
\newcommand{\Emf}{\boldsymbol{\mathcal{E}}^e_M}
\newcommand{\nf}{\mathcal{N}^e}
\newcommand{\Sball}{B^{{\scriptscriptstyle \text{\rm S}}}}
\newcommand{\diff}[1]{\,\hbox{\rm d}#1}               % dt,dx,... for integrals
\newcommand{\Dsmooth}{\D^\text{\rm s}}
\newcommand{\Dlc}{\D^{{\scriptscriptstyle \text{\rm LC}}}}
\newcommand{\ptind}{\displaystyle \mathop {\ldots\ldots\,}} % marks with smth over. Usage: \ptind^{...}
\providecommand{\definitionname}{Definition}
\providecommand{\examplename}{Example}
\providecommand{\remarkname}{Remark}
\providecommand{\theoremname}{Theorem}
\providecommand{\corollaryname}{Corollary}
  \providecommand{\corollaryname}{Corollary}
  \providecommand{\definitionname}{Definition}
  \providecommand{\examplename}{Example}
  \providecommand{\lemmaname}{Lemma}
  \providecommand{\remarkname}{Remark}
\providecommand{\theoremname}{Theorem}
  \providecommand{\corollaryname}{Corollary}
  \providecommand{\definitionname}{Definition}
  \providecommand{\examplename}{Example}
  \providecommand{\factname}{Fact}
  \providecommand{\lemmaname}{Lemma}
  \providecommand{\remarkname}{Remark}
\providecommand{\theoremname}{Theorem}
\begin{document}

\title{Categorical frameworks for generalized functions}

\author{Paolo Giordano \and Enxin Wu}

\thanks{P.\ Giordano has been supported by grant P25116-N25 of the Austrian
Science Fund FWF}

\address{\textsc{University of Vienna, Austria}}

\email{paolo.giordano@univie.ac.at }

\thanks{E.\ Wu has been partially supported by grant P25311-N25 of the Austrian
Science Fund FWF}

\address{\textsc{University of Vienna, Austria}}

\email{enxin.wu@univie.ac.at}

\subjclass[2000]{46T30, 46F25, 46F30, 58Dxx.}

\keywords{Distributions, generalized functions, infinite-dimensional spaces,
diffeological spaces.}
\begin{abstract}
We tackle the problem of finding a suitable categorical framework
for generalized functions used in mathematical physics for linear
and non-linear PDEs. We are looking for a Cartesian closed category
which contains both Schwartz distributions and Colombeau generalized
functions as natural objects. We study Frölicher spaces, diffeological
spaces and functionally generated spaces as frameworks for generalized
functions. The latter are similar to Frölicher spaces, but starting
from locally defined functionals. Functionally generated spaces strictly
lie between Frölicher spaces and diffeological spaces, and they form
a complete and cocomplete Cartesian closed category. We deeply study
functionally generated spaces (and Frölicher spaces) as a framework
for Schwartz distributions, and prove that in the category of diffeological
spaces, both the special and the full Colombeau algebras are smooth
differential algebras, with a smooth embedding of Schwartz distributions
and smooth pointwise evaluations of Colombeau generalized functions.
\end{abstract}
\maketitle
\tableofcontents{}

\section{Introduction: finding a categorical framework for generalized functions}

The problem of considering (generalized) derivatives of locally integrable
functions arises frequently in Physics, e.g.\ in idealized models
like in shock Mechanics, material points Mechanics, charged particles
in Electrodynamics, gravitational waves in General Relativity, etc.\ (see
e.g.\ \cite{Col92,GKOS,MO01}). Therefore, the need to perform calculations
with discontinuous functions like one deals with smooth functions
motivated the introduction of generalized functions (GF) as objects
extending, in some sense, the notion of function. As such, generalized
functions find deep applications in solutions of singular differential
equations (\cite{Hor,MO92}) and are naturally framed in (several)
theories of infinite dimensional spaces, from locally convex vector
spaces (\cite{KN}) and convenient vector spaces (\cite{KM}) up to
diffeological (\cite{I,KR06}) and Frölicher spaces (\cite{FK}).

The foundation of a rigorous linear theory of generalized functions
has been pioneered by L.\ Schwartz with a deep use of locally convex
vector space theory (\cite{Sch,Hor}), but heuristic multiplications
of distributions early appeared e.g.\ in quantum electrodynamics,
elasticity, elastoplasticity, acoustics and other fields (\cite{Col92,MO01}).
Despite the impossibility of a straightforward extension of Schwartz
linear theory (\cite{Sch54}) to an algebra extending pointwise product
of continuous functions, the theory of Colombeau algebras (see e.g.\ \cite{C1,C2,Col92,MO92,GKOS,MO01})
permits to bypass this impossibility in a very simple way by considering
an algebra of generalized functions which extends the pointwise product
of smooth functions.

The main aim of the present work is to study different categories
as frameworks for generalized functions. In particular, we introduce
the category $\Fgen$ of \emph{functionally generated spaces}. This
category has very nice properties and strictly lies between Frölicher
and diffeological spaces.

We start by defining the algebras from Colombeau theory that we will
consider in this work. Henceforth, we will use the notations of \cite{GKOS,Hor}
for the well-known Schwartz distribution theory.

\subsection{\label{sub:Special-and-fullCA}The special and full Colombeau algebras}

\subsubsection*{The special Colombeau algebra}

In this section, we fix some basic notations and terminology from
Colombeau theory. For details we refer to \cite{GKOS}. We include
zero in the natural numbers $\N=\{0,1,2,\ldots\}$. Henceforth, $\Omega$
will always be an open subset of $\R^{n}$ and we denote by $I$ the
interval $(0,1]$. The (special) Colombeau algebra on $\Omega$ is
defined as the quotient $\gs(\Omega):=\mathcal{E}_{M}^{s}(\Omega)/\mathcal{N}^{s}(\Omega)$
of \emph{moderate nets} over \emph{negligible nets}, where the former
is 
\begin{multline*}
\ems(\Omega):=\{(u_{\eps})\in\Coo(\Omega)^{I}\mid\forall K\Subset\Omega\,\forall\alpha\in\N^{n}\,\exists N\in\N:\\
\sup_{x\in K}|\partial^{\alpha}u_{\eps}(x)|=O(\eps^{-N})\}
\end{multline*}
 and the latter is 
\begin{multline*}
\ns(\Omega):=\{(u_{\eps})\in\Coo(\Omega)^{I}\mid\forall K\Subset\Omega\,\forall\alpha\in\N^{n}\,\forall m\in\N:\\
\sup_{x\in K}|\partial^{\alpha}u_{\eps}(x)|=O(\eps^{m})\}.
\end{multline*}

Throughout this paper, every asymptotic relation is for $\eps\to0^{+}$.
Nets in $\ems(\Omega)$ are written as $(u_{\eps})$, and we use $u=[u_{\eps}]$
to denote the corresponding equivalence class in $\gs(\Omega)$. For
$(u_{\eps})\in\ns(\Omega)$ we also write $(u_{\eps})\sim0$. Then
$\Omega\mapsto\gs(\Omega)$ is a fine and supple sheaf of differential
algebras and there exist sheaf embeddings of the space of Schwartz
distributions $\D'$ into $\gs$ (cf.\ \cite{GKOS}). A very simple
way to embed $\D'$ into $\gs$ is given by the following result (\cite{SteVic09}):
\begin{thm}
\label{thm:magicMollifier}There exists a net $\left(\psi_{\eps}\right)\in\D(\R^{n})^{I}$
with the properties: 
\begin{enumerate}[leftmargin={*},label=(\roman*),align=left]
\item \label{enu:emb_supp}$\forall\eps\in I\,\forall x\in\supp(\psi_{\eps}):\ |x|<1$; 
\item \label{enu:emb_int1}$\int\psi_{\eps}=1\forall\eps\in I$, where the
implicit integration is over the whole $\R^{n}$; 
\item \label{enu:emb_moderate}$\forall\alpha\in\N^{n}\,\exists N\in\N:\ \sup_{x\in\R^{n}}\left|\partial^{\alpha}\psi_{\eps}(x)\right|=O(\eps^{-N})$; 
\item \label{enu:emb_moments}$\forall j\in\N\,\exists\eps_{0}\in I\,\forall\eps\in(0,\eps_{0}]\,\forall\alpha\in\N^{n}:\ 1\le|\alpha|\le j\Rightarrow\int x^{\alpha}\psi_{\eps}(x)\diff{x}=0$; 
\item \textup{\label{enu:emb_intAbs}$\forall\eta\in\R_{>0}\,\exists\eps_{0}\in I\,\forall\eps\in(0,\eps_{0}]:\ \int|\psi_{\eps}|\le1+\eta$.} 
\end{enumerate}

In particular, if we set 
\[
\eps\odot\psi_{\eps}:x\in\R^{n}\mapsto\frac{1}{\eps^{n}}\psi_{\eps}\left(\frac{x}{\eps}\right)\in\R\qquad\forall\eps\in I
\]
 
\begin{equation}
\iota_{\Omega}(u):=[u*(\eps\odot\psi_{\eps}|_{\Omega})]\qquad\forall u\in\D'(\Omega)\label{eq:embeddingSpecial}
\end{equation}
 then we have: 
\begin{enumerate}[leftmargin={*},label=(\roman*),align=left,start=6]
\item \label{enu:embLin}$\iota_{\Omega}:\D'(\Omega)\ra\gs(\Omega)$ is
a linear embedding; 
\item \label{enu:embDer}$\partial^{\alpha}(\iota_{\Omega}(u))=\iota_{\Omega}(D^{\alpha}u)$
for all $u\in\D'(\Omega)$ and all $\alpha\in\N^{n}$, where $D^{\alpha}$
and $\partial^{\alpha}$ are the $\alpha$-partial differential operators
on $\D'(\Omega)$ and $\gs(\Omega)$, respectively; 
\item \label{enu:embSmooth}$\iota_{\Omega}(f)=[f]$ for all $f\in\Coo(\Omega)$.
\end{enumerate}
\end{thm}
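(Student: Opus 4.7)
The plan is to separate the statement into two parts: (I) the existence of the net $(\psi_\eps)$ satisfying (i)--(v), which is the technical heart, and (II) the deduction of the embedding properties (vi)--(viii) from (i)--(v) via classical convolution calculus.

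For (I), I would fix a non-negative bump $\phi \in \D(B_1(0))$ with $\int \phi = 1$, so that the rescalings $\phi^{(\delta)}(x) := \delta^{-n}\phi(x/\delta)$ satisfy $\int x^\alpha \phi^{(\delta)}(x) \diff{x} = \delta^{|\alpha|} \int y^\alpha \phi(y) \diff{y}$, which is small for $|\alpha| \geq 1$ when $\delta \to 0^+$. For each $j \in \N$, pick $\tau_\alpha^{(j)} \in \D(B_1(0))$, $|\alpha| \leq j$, with $\int x^\beta \tau_\alpha^{(j)}(x) \diff{x} = \delta_{\alpha,\beta}$ for $|\beta| \leq j$, by solving the (invertible) finite-dimensional linear system on $\mathrm{span}\{x^\gamma \phi : |\gamma| \leq j\}$. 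Then define
\[
\psi_\eps(x) := \phi^{(\delta(\eps))}(x) - \sum_{1 \leq |\alpha| \leq j(\eps)} \Bigl(\int y^\alpha \phi^{(\delta(\eps))}(y) \diff{y}\Bigr)\,\tau_\alpha^{(j(\eps))}(x),
\]
for step functions $\delta(\eps) \to 0^+$ decaying polynomially in $\eps$ (to ensure (iii)) and $j(\eps) \to \infty$ slowly enough that the correction sum is $L^1$-small. Then (i), (ii), (iv) are immediate: the correction lives in $\D(B_1(0))$; since $\int \tau_\alpha^{(j)} = \delta_{\alpha,0}$ for $|\alpha| \geq 1$ the correction contributes nothing to $\int \psi_\eps$; and the defining dual relations force the moments of order $1 \leq |\beta| \leq j(\eps)$ to cancel exactly. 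Item (v) follows from $\|\psi_\eps - \phi^{(\delta(\eps))}\|_{L^1} \leq \sum_{|\alpha| \geq 1} \delta(\eps)^{|\alpha|}\,|m_\alpha(\phi)|\,\|\tau_\alpha^{(j(\eps))}\|_{L^1} \to 0$ together with $\|\phi^{(\delta(\eps))}\|_{L^1} = 1$.

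For (II), moderateness of $u * (\eps \odot \psi_\eps)$ on each compact $K \Subset \Omega$ follows from the local finite order of $u$ and the bounds (iii) after substituting $y = \eps z$ in the convolution integral; this makes $\iota_\Omega$ well-defined and linearity (vi) trivial. Injectivity uses the fact that $\eps \odot \psi_\eps$ is an approximate identity (support in $B_\eps$, integral one, $L^1$-norm $\leq 2$ by (v)), so the classical mollifier theorem gives $u * (\eps \odot \psi_\eps) \to u$ in $\D'(\Omega)$; combined with uniform-on-compacta convergence to zero of any net in $\ns(\Omega)$, this forces $u = 0$. Property (vii) follows from the convolution identity $\partial^\alpha(u * \phi) = (D^\alpha u) * \phi$ descended to equivalence classes. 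For (viii), Taylor expanding $f(x - \eps z) = \sum_{|\alpha| \leq j} \frac{(-\eps z)^\alpha}{\alpha!}\partial^\alpha f(x) + R_j(x, \eps z)$ and substituting $y = \eps z$ gives
\[
(f*(\eps \odot \psi_\eps))(x) - f(x) = \sum_{1 \leq |\alpha| \leq j} \frac{(-\eps)^{|\alpha|}}{\alpha!}\,\partial^\alpha f(x) \int z^\alpha \psi_\eps(z) \diff{z} + \int R_j(x, \eps z)\,\psi_\eps(z) \diff{z};
\]
the first sum vanishes for $\eps$ small by (iv), and the remainder is $O(\eps^{j+1})$ uniformly on $K$ using the $C^{j+1}$-bound on $f$ near $K$, the support condition $|z| < 1$, and (v). Since $j$ is arbitrary the difference is in $\ns(\Omega)$, and applying the same argument to $\partial^\beta f$ handles the higher derivative seminorms.

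The main obstacle I expect is (v). Any compactly supported $\rho \in L^1(\R^n)$ with $\int \rho = 1$ and vanishing first and second moments must change sign (otherwise $\rho(x)\diff{x}$ would be a probability measure of zero variance, hence a Dirac mass), so $\|\rho\|_{L^1} > 1$ strictly for any fixed such $\rho$; thus (v) genuinely forces a varying-$\eps$ construction in which the sign-changing corrections vanish in $L^1$-mass. The delicate quantitative point is balancing the decay of $\delta(\eps)$, the growth of $j(\eps)$, the growth of $\|\tau_\alpha^{(j)}\|_{L^1}$, and the seminorm blow-up $\|\partial^\beta \phi^{(\delta(\eps))}\|_\infty = \delta(\eps)^{-n-|\beta|}\|\partial^\beta \phi\|_\infty$ so that (iii), (iv), and (v) hold simultaneously.
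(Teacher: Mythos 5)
Your construction is correct in outline, but note that the paper itself does not prove this theorem at all: it is imported verbatim from Steinbauer--Vickers \cite{SteVic09}, so there is no in-paper argument to compare with, and your proposal amounts to an independent, self-contained proof. Your route --- correcting the moments of a scaled bump $\phi^{(\delta(\eps))}$ by subtracting a combination of ``dual'' test functions $\tau_\alpha^{(j)}$ with $\int x^\beta\tau_\alpha^{(j)}\diff{x}=\delta_{\alpha\beta}$, with $\delta(\eps)\to0$ polynomially and $j(\eps)\to\infty$ slowly --- does work: the $\tau_\alpha^{(j)}$ exist because the matrix $\bigl(\int x^{\beta+\gamma}\phi(x)\diff{x}\bigr)_{|\beta|,|\gamma|\le j}$ is the Gram matrix of the monomials in $L^{2}(\phi\,\diff{x})$, hence invertible; properties (i), (ii), (iv) are then exact as you say, and the coefficients $\delta(\eps)^{|\alpha|}m_\alpha(\phi)$ give the $L^{1}$-smallness needed for (v). The only point you leave genuinely implicit is that the balancing must be done \emph{simultaneously} for (v) and for (iii) over all derivative orders $\beta$: this is handled by a single diagonal choice, e.g.\ picking $\eps_j\downarrow0$ with $\delta(\eps_j)\max_{|\beta|\le j}\sum_{1\le|\alpha|\le j}\bigl(\|\partial^{\beta}\tau_\alpha^{(j)}\|_\infty+\|\tau_\alpha^{(j)}\|_{L^{1}}\bigr)\le 2^{-j}$ and setting $j(\eps):=j$ on $(\eps_{j+1},\eps_j]$; then for each fixed $\beta$ the correction term is bounded for small $\eps$ and the first term contributes the moderate bound $\delta(\eps)^{-n-|\beta|}\|\partial^{\beta}\phi\|_\infty$. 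Your observation that (v) can only hold asymptotically (a nonnegative density cannot have vanishing second moments) correctly identifies why the $\eps$-dependent correction is unavoidable. Part (II) is the standard argument and is fine; the one caveat --- that $u*(\eps\odot\psi_{\eps})$ for $u\in\D'(\Omega)$ is literally defined only at points of $\Omega$ at distance greater than $\eps$ from the boundary, so one works on compact sets for small $\eps$ (or inserts cutoffs) --- is a gloss already present in the statement as quoted, so it is not a gap attributable to you.
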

The ring of constants in $\gs$ is denoted by $\Rtil$
and is called the \emph{ring of Colombeau generalized numbers} (CGN).
It is an ordered ring with respect to the order defined by $[x_{\eps}]\le[y_{\eps}]$
iff $\exists[z_{\eps}]\in\Rtil$ such that $(z_{\eps})\sim0$ and
$x_{\eps}\le y_{\eps}+z_{\eps}$ for $\eps$ sufficiently small. Even
if this order is not total, we can still define the infimum $[x_{\eps}]\wedge[y_{\eps}]:=[\min(x_{\eps},y_{\eps})]$,
and analogously the supremum of two elements. More generally, the
space of generalized points in $\Omega$ is $\widetilde{\Omega}=\Omega_{M}/\sim$,
where $\Omega_{M}=\{(x_{\eps})\in\Omega^{I}\mid\exists N\in\N:|x_{\eps}|=O(\eps^{-N})\}$
is called the \emph{set of moderate nets} and $(x_{\eps})\sim(y_{\eps})$
if $|x_{\eps}-y_{\eps}|=O(\eps^{m})$ for every $m\in\N$. By $\mathcal{N}$
we will denote the set of all negligible nets of real numbers $(x_{\eps})\in\R^{I}$,
i.e., such that $(x_{\eps})\sim0$.

The space of compactly supported generalized points $\otilc$ is defined
by $\Omega_{c}/\!\sim$, where $\Omega_{c}:=\{(x_{\eps})\in\Omega^{I}\mid\exists K\Subset\Omega\,\exists\eps_{0}\,\forall\eps<\eps_{0}:\ x_{\eps}\in K\}$
and $\sim$ is the same equivalence relation as in the case of $\widetilde{\Omega}$.
Any Colombeau generalized function (CGF) $u\in\gs(\Omega)$ acts on
generalized points from $\otilc$ by $u(x):=[u_{\eps}(x_{\eps})]$
and is uniquely determined by its point values (in $\Rtil$) on compactly
supported generalized points (\cite{GKOS}), but not on standard points.
A CGF $[u_{\eps}]$ is called \emph{compactly-bounded} (c-bounded)
from $\Omega$ into $\Omega'$ if for any $K\Subset\Omega$ there
exists $K'\Subset\Omega'$ such that $u_{\eps}(K)\subseteq K'$ for
$\eps$ small. This type of CGF is closed with respect to composition.
Moreover, if $u\in\gs(\Omega)$ is c-bounded from $\Omega$ into $\Omega'$
and $v\in\gs(\Omega')$, then $[v_{\eps}\circ u_{\eps}]\in\gs(\Omega)$.
For $x,y\in\Rtil^{n}$ we will write $x\approx y$ if $x-y$ is infinitesimal,
i.e., if $|x-y|\le r$ for all $r\in\R_{>0}$.

Topological methods in Colombeau theory are usually based on the so-called
\emph{sharp topology} (see e.g.\ \cite{AJOS} and references therein),
which is the topology generated by the balls $\Sball_{\rho}(x)=\{y\in\Rtil^{n}\mid|y-x|<\rho\}$,
where $|-|$ is the natural extension of the Euclidean norm on $\Rtil^{n}$,
i.e., $|[x_{\eps}]|:=[|x_{\eps}|]\in\Rtil$, and $\rho\in\Rtil_{>0}$
is positive invertible. Henceforth, we will also use the notation
$\Rtil^{*}:=\{x\in\Rtil\mid x\text{ is invertible}\}$. Finally, Garetto
in \cite{Gtop,Gtop2} extended the above construction to arbitrary
locally convex spaces by functorially assigning a space of CGF $\gs_{E}$
to any given locally convex space $E$. The seminorms of $E$ can
then be used to define pseudovaluations which in turn induce a generalized
locally convex topology on the $\widetilde{\Co}$-module $\gs_{E}$,
again called \emph{sharp topology}.

\subsubsection*{The full Colombeau algebra}

Clearly, the embedding $\iota_{\Omega}$ defined in \eqref{eq:embeddingSpecial}
depends on the net of maps $(\psi_{\eps})\in\D(\R^{n})^{I}$ whose
existence is given by Thm.~\ref{thm:magicMollifier}. This shall
not be considered only in a negative way: e.g.\ it is not difficult
to choose $(\psi_{\eps})$ so that the embedding satisfies the properties
that $H(0)=\iota_{\R}(H)(0)=[\int_{-\infty}^{0}\psi{}_{\eps}]=0$
and $\delta(0)=\iota_{\R}(\delta)(0)=[\psi_{\eps}(0)]$ is an infinite
number of $\Rtil$ (here $H$ is the Heaviside function and $\delta$
is the Dirac delta function). These properties are informally used
in several applications.

The main idea of the full Colombeau algebra is to consider a different
set of indices, instead of $I=(0,1]$, so as to obtain an intrinsic
embedding.
\begin{defn}
\label{def:fullCA}
\begin{enumerate}[leftmargin={*},label=(\roman*),align=left]
\item $\mathcal{A}_{0}(\Omega):=\left\{ \phi\in\D(\Omega)\mid\int\phi=1\right\} $,
$\mathcal{A}_{0}:=\mathcal{A}_{0}(\R^{n})$;
\item $\mathcal{A}_{q}(\Omega):=\left\{ \phi\in\mathcal{A}_{0}(\Omega)\mid\forall\alpha\in\N^{n}:\ 1\le|\alpha|\le q\Rightarrow\int x^{\alpha}\phi(x)\diff{x}=0\right\} $; 
\item $\mathcal{A}_{q}:=\mathcal{A}_{q}(\R^{n})$; 
\item $U(\Omega):=\left\{ (\phi,x)\in\mathcal{A}_{0}\times\Omega\mid\supp(\phi)\subseteq\Omega-x\right\} $; 
\item We say that $R\in\mathcal{E}^{e}(\Omega)$ iff $R:U(\Omega)\ra\R$
and 
\[
\forall\phi\in\mathcal{A}_{0}:\ R(\phi,-)\text{ is smooth on }\Omega\cap\{x\in\R^{n}\mid\supp(\phi)\subseteq\Omega-x\};
\]

\item We say that $R\in\emf(\Omega)$ iff $R\in\mathcal{E}^{e}(\Omega)$
and 
\[
\forall K\Subset\Omega\,\forall\alpha\in\N^{n}\,\exists N\in\N\,\forall\phi\in\mathcal{A}_{N}:\ \sup_{x\in K}\left|\partial^{\alpha}R(\eps\odot\phi,x)\right|=O(\eps^{-N});
\]

\item We say that $R\in\nf(\Omega)$ iff $R\in\mathcal{E}^{e}(\Omega)$
and 
\[
\forall K\Subset\Omega\,\forall\alpha\in\N^{n}\,\forall m\in\N\,\exists q\in\N\,\forall\phi\in\mathcal{A}_{q}:\ \sup_{x\in K}\left|\partial^{\alpha}R(\eps\odot\phi,x)\right|=O(\eps^{m});
\]

\item $\gf(\Omega):=\emf(\Omega)/\nf(\Omega)$ is called the \emph{full
Colombeau algebra}; 
\item The above mentioned intrinsic embedding $\iota_{\Omega}:\D'(\Omega)\ra\gf(\Omega)$
is defined by $\left(\iota_{\Omega}u\right)(\phi,x):=\langle u,\phi(?-x)\rangle$.
It verifies properties like \ref{enu:embLin}, \ref{enu:embDer} and
\ref{enu:embSmooth} in Thm.~\ref{thm:magicMollifier}. 
\end{enumerate}
\end{defn}
For motivations and details, see \cite{GKOS}.

\section{Functionally generated diffeologies}

\subsection{Preliminaries on diffeological spaces and Frölicher spaces}

Both diffeological spaces and Frölicher spaces are generalizations
of smooth manifolds, introduced by J.M. Souriau and A. Frölicher,
respectively, in the 1980's. The smooth structure (called the \emph{diffeology})
on a diffeological space is defined by some testing functions from
all open subsets of all Euclidean spaces to the given set, subject
to a covering condition, a presheaf condition and a sheaf condition
(see Def. \ref{def:diffeologicalSpace}). A possible intuitive description
of this structure on a diffeological space $X$ is that a diffeology
is the specification not only of a particular family of smooth functions
(like charts on manifolds) but of all the possible smooth maps of
the type $d:U\longrightarrow X$ for all open subsets $U\subseteq\R^{n}$
and for all $n\in\N$. We can roughly say that we have to specify
what are smooth curves, surfaces, etc. on the space $X$.

On a Frölicher space $X$ we consider only $U=\R$, i.e., the smooth
structure on the space is given by a set of smooth curves; moreover,
these curves are determined by (and they determine) a given set of
functionals, i.e., of smooth functions of the type $l:X\longrightarrow\R$
(see Def. \ref{def:FrolicherSpace}). The category $\Fr$ of all Frölicher
spaces is a full subcategory of the category $\Diff$ of all diffeological
spaces.

In the following subsections, we are going to focus on a family of
diffeological spaces called \emph{functionally generated (diffeological)
spaces}, where the diffeological structure is determined by a given
family of locally defined smooth functionals. As we will see in the
present work, these spaces frequently appear in functional analysis,
strictly lie between diffeological spaces and Frölicher spaces, and
the category $\Fgen$ of all these spaces behaves nicely \textendash{}
it is complete, cocomplete and Cartesian closed.

To simplify the notation, we write $\OR$ for the category of open
sets in Euclidean spaces and ordinary smooth functions.
\begin{defn}
\label{def:diffeologicalSpace}A \emph{diffeological space} $X=(|X|,\D)$
is a set $|X|$ together with a specified family of functions 
\[
\D=\cup_{U\in\OR}\D_{U}\text{ with }\D_{U}\subseteq\Set(U,|X|)
\]
 such that for any $U,V\in\OR$, the following three axioms hold: 
\begin{enumerate}[leftmargin={*},label=(\roman*),align=left]
\item Every constant function $d:U\ra|X|$ is in $\D_{U}$ (Covering condition); 
\item $d\circ f\in\D_{V}$ for any $d:U\ra|X|\in\D_{U}$ and any $f\in\mC^{\infty}(V,U)$
(Presheaf condition); 
\item Let $d\in\Set(U,|X|)$, and let $\{U_{i}\}_{i\in I}$ be an open covering
of $U$. Then $d\in\D_{U}$ if $d|_{U_{i}}\in\D_{U_{i}}$ for each
$i\in I$ (Sheaf condition).
\end{enumerate}
\end{defn}
For a diffeological space $X=(|X|,\D)$, every element in $\D$ is
called a \emph{plot} of $X$. We write $d\In{U}X$ to denote that
$d\in\D_{U}$, which will also be called a \emph{figure} of type \emph{$U$}
of the space \emph{$X$.}
\begin{defn}
\label{def:diffeoMorph}A morphism (also called \emph{smooth map})
$f:X\ra Y$ between two diffeological spaces $X=(|X|,\D^{X})$ and
$Y=(|Y|,\D^{Y})$ is a function $|f|:|X|\ra|Y|$ such that $f\circ d\in\D_{U}^{Y}$
for any $d\in\D_{U}^{X}$ and $U\in\OR$.
\end{defn}
If we write $f(d):=f\circ d$, by the covering condition of Def. \ref{def:diffeologicalSpace}
we have a generalization of the usual evaluation; moreover, $f:X\ra Y$
is smooth if and only if for all $U\in\OR$ and $d\In{U}X$, we have
$f(d)\In{U}Y$, i.e., $f$ take figures of type $U$ on the domain
to figures of the same type in the codomain. Moreover, $X=Y$ as diffeological
spaces if and only if for all $d$ and $U$, $d\In{U}X$ if and only
if $d\In{U}Y$. These and several other generalization of set-theoretical
properties justify the use of the symbol $\In{U}$.

All diffeological spaces with smooth maps form a category, which will
be denoted by $\Diff$. Given two diffeological spaces $X$ and $Y$,
we write $\Coo(X,Y)$ for the set of all smooth maps $X\ra Y$.

\medskip{}

Here is a list of basic properties of diffeological spaces. We refer
readers to the standard textbook~\cite{I} for more details.
\begin{rem}
\label{rem:propDiffSpaces}
\begin{enumerate}[leftmargin={*},label=(\roman*),align=left]
\item By a smooth manifold, we always assume it is Hausdorff and finite-dimensional.
Every smooth manifold $M$ is automatically a diffeological space
$\textbf{M}=(M,\D)$ with $d\In{U}\textbf{M}$ if and only if $d:U\ra M$
is smooth in the usual sense. We call this $\D$ the \emph{standard
diffeology} on $M$, and without specification, we always assume a
smooth manifold with this diffeology when viewed as a diffeological
space. Moreover, given two smooth manifolds $M$ and $N$, $f:\textbf{M}\ra\textbf{N}$
is smooth if and only if $f:M\ra N$ is smooth in the usual sense.
In other words, the category $\Man$ of all smooth manifolds and smooth
maps is fully embedded in $\Diff$. This justifies our notation $\Coo(X,Y)$
for the hom-set $\Diff(X,Y)$. Limits of smooth manifolds that already
exist in $\Man$ are preserved by this embedding (see Thm.~\ref{thm:limitsPreserv}).
Generally speaking the same property does not hold for colimits of
smooth manifolds that already exist in $\Man$. 
\item \label{enu:discreteDiff}Given a set $X$, the set of all diffeologies
on $X$ forms a complete lattice. The smallest diffeology is called
the \emph{discrete diffeology}, which consists of all locally constant
functions, and the largest diffeology is called the \emph{indiscrete
diffeology}, which consists of all set functions. Let $A=(X,\D_{A})$
and $B=(X,\D_{B})$ be two diffeological spaces with the same underlying
set. We simply write $A\subseteq B$ iff $1_{X}:A\ra B$ is smooth,
i.e., iff $\D_{A}\subseteq\D_{B}$.

Therefore, given a family of functions $\left\{ \iota_{i}:|X_{i}|\ra Y\right\} _{i\in I}$
from the underlying sets of the diffeological spaces $X_{i}$ to a
fixed set $Y$, there exists a smallest diffeology on $Y$ making
all these maps $\iota_{i}$ smooth. We call this diffeology the \emph{final
diffeology associated to $I$}. In more detail, 
\[
d\In{U}Y\text{ iff }\forall u\in U\,\exists V\text{ neigh. of }u\,\exists i\in I\,\exists\delta\In{V}X_{i}:\ \iota_{i}\circ\delta=d|_{V}.
\]
 Dually, given a family of functions $\left\{ p_{j}:X\ra|Y_{j}|\right\} _{j\in J}$
from a given set $X$ to the underlying sets of the diffeological
spaces $Y_{j}$, there exists a largest diffeology on $X$ making
all these maps $p_{j}$ smooth. We call this diffeology the \emph{initial
diffeology associated to $J$}. In more detail, 
\[
d\In{U}X\text{ iff }p_{j}\circ d\In{U}Y_{j}\ \forall j\in J.
\]

In particular, if $Y$ is a quotient set of $|X|$, then the final
diffeology on $Y$ associated to the quotient map $|X|\ra Y$ is called
the \emph{quotient diffeology}, and $Y$ with the quotient diffeology
is called a \emph{quotient diffeological space} of $X$. Dually, if
$X$ is a subset of $|Y|$, then the initial diffeology on $X$ associated
to the inclusion map $X\ra|Y|$ is called the \emph{sub-diffeology},
and we write $(X\prec Y)$ to denote this new diffeological space.
We call $(X\prec Y)$ the \emph{diffeological subspace} of $Y$. Finally,
the initial diffeology associated to the projection maps $p_{i}:\prod_{i\in I}|X_{i}|\ra|X_{i}|$
of an arbitrary product is called the \emph{product diffeology}, and
dually the final diffeology associated to the inclusion maps $|X_{j}|\ra\coprod_{j\in J}|X_{j}|$
of an arbitrary coproduct is called the \emph{coproduct diffeology}.

\item \label{enu:limitsColimitsDiff}The category $\Diff$ is complete and
cocomplete. In more detail, let $G:\mathcal{I}\ra\Diff$ be a functor
from a small category $\mathcal{I}$. Write $|-|:\Diff\ra\Set$ for
the forgetful functor. Then both $\lim G$ and $\colim G$ exist in
$\Diff$ as lifting and co-lifting of limits and colimits in $\Set$.
In more detail, $|\lim G|=\lim|G|$ and the diffeology of $\lim G$
is the initial diffeology associated to the universal cone $\{\lim|G|\ra|G(i)|\}_{i\in\mathcal{I}}$
in $\Set$; dually $|\colim G|=\colim|G|$ and the diffeology of $\colim G$
is the final diffeology associated to the universal co-cone $\{|G(i)|\ra\colim|G|\}_{i\in\mathcal{I}}$
in $\Set$.
\item The category $\Diff$ is Cartesian closed. In more detail, given three
diffeological spaces $X$, $Y$ and $Z$, there is a canonical diffeology
(called the \emph{functional diffeology}) on $\Coo(X,Y)$ defined
by 
\[
d\In{U}\Coo(X,Y)\text{ iff }d^{\vee}\in\Coo(U\times X,Y),
\]
 with $d^{\vee}(u,x)=d(u)(x)$ (in the present work, we use the notations
of\ \cite{AHS}). Without specification, the set $\Coo(X,Y)$ is
always equipped with the functional diffeology when viewed as a diffeological
space. Then Cartesian closedness means that $f\in\Coo(X,\Coo(Y,Z))$
if and only if $f^{\vee}\in\Coo(X\times Y,Z)$ (or, equivalently that
$g\in\Coo(X\times Y,Z)$ if and only if $g^{\wedge}\in\Coo(X,\Coo(Y,Z))$,
where $g^{\wedge}(x)(y):=g(x,y)$). Therefore, Cartesian closedness
permits to equivalently translate an infinite dimensional problem
like $f\in\Coo(X,\Coo(Y,Z))$ into a finite dimensional one $f^{\vee}\in\Coo(X\times Y,Z)$
and vice versa.
\item Every diffeological space can be extended with infinitely near points
$X\in\Diff\mapsto\ext{X}\in\Fer$, $X\subseteq\ext{X}$, obtaining
a nonArchimedean framework similar to Synthetic Differential Geometry
(see e.g.\ \cite{MoeRey} and references therein) but compatible
with classical logic. The category $\Fer$ of \emph{Fermat spaces}
is defined by generalizing the category of diffeological spaces, but
taking suitable smooth functions defined on the extension $\ext{U}\subseteq\FR^{n}$
of open sets $U\in\OR$. It is remarkable to note that the so called
\emph{Fermat functor} $\ext{(-)}:\Diff\ra\Fer$ has very good preservation
properties strictly related to intuitionistic logic. See \cite{Gio10a,Gio10b,Gio10c,Gio10d,GK}
for more details. 
\item $\Diff$ is a quasi-topos and hence is locally Cartesian closed (\cite{Ba-Ho11}). 
\end{enumerate}
\end{rem}
Every diffeological space has an interesting canonical topology:
\begin{defn}
Let $X=(|X|,\D)$ be a diffeological space. The final topology $\tau_{X}$
induced by $\D$ is called the \emph{$D$-topology}.
\end{defn}
Without specification, every diffeological space $X$ is equipped
with the $D$-topology $\tau_{X}$. Elements in $\tau_{X}$ are called
\emph{$D$-open subsets}.
\begin{example}
\ 
\begin{enumerate}[leftmargin={*},label=(\roman*),align=left]
\item The $D$-topology on any smooth manifold is the usual topology. 
\item The $D$-topology on any discrete (indiscrete) diffeological space
is the discrete (indiscrete) topology. 
\end{enumerate}
\end{example}
\begin{thm}
\cite{SYH} $\TD:\Diff\ra\Top$ defined by $\TD(X)=(|X|,\tau_{X})$
is a functor%
\footnote{We can recall the symbol $\TD$ by saying ``topological space from
diffeological space''. Analogously we can recall the plenty of symbols
for the other functors related to our categories in this paper.%
}, which has a right adjoint $\DT:\Top\ra\Diff$ defined by $|\DT(X)|:=|X|$
and $d\In{U}\DT(X)$ iff $d\in\Top(U,X)$ (both functors act as identity
on arrows).
\end{thm}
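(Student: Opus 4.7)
The plan is to prove four things in sequence: (a)~$\DT(Y)$ is a genuine diffeological space for every topological space $Y$, (b)~both assignments are functorial on arrows, (c)~every plot of $X\in\Diff$ is continuous as a map into $\TD(X)$, and (d)~the hom-set bijection $\Diff(X,\DT(Y))\cong\Top(\TD(X),Y)$ holds and is natural. Everything reduces to the fact that $\tau_X$ is by construction the final topology with respect to the plots, so it satisfies exactly the universal property needed.

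For (a), the plots of $\DT(Y)$ are, by definition, the continuous maps from objects of $\OR$ into $Y$. The covering axiom holds since constant maps are continuous; the presheaf axiom holds because the composition of a continuous map with a smooth (in particular continuous) reparameterization is continuous; the sheaf axiom holds because continuity is a local property. For (b), if $f\colon X\to Y$ is smooth in $\Diff$ and $V\in\tau_Y$, then for every plot $d\In{U}X$ the composite $f\circ d\In{U}Y$, so $d^{-1}(f^{-1}(V))=(f\circ d)^{-1}(V)$ is open in $U$; hence $f^{-1}(V)\in\tau_X$ and $\TD(f)$ is continuous. Dually, if $g\colon Y\to Y'$ is continuous and $d\In{U}\DT(Y)$ (i.e.\ $d\in\Top(U,Y)$), then $g\circ d\in\Top(U,Y')$, so $g\circ d\In{U}\DT(Y')$ and $\DT(g)$ is smooth. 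Functoriality on identities and compositions is immediate because both functors are the identity on underlying sets and functions.

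For (c), note that if $V\in\tau_X$ and $d\In{U}X$, the definition of the $D$-topology as the final topology gives $d^{-1}(V)$ open in $U$; this is the content we need: plots of $X$ are continuous maps $U\to\TD(X)$. With this in hand, step (d) becomes formal. Given a smooth $f\colon X\to\DT(Y)$ and an open $V\subseteq Y$, for any plot $d\In{U}X$ the map $f\circ d\In{U}\DT(Y)$ is continuous, so $d^{-1}(f^{-1}(V))$ is open in $U$; by definition of $\tau_X$ this shows $f\colon\TD(X)\to Y$ is continuous. Conversely, given continuous $f\colon\TD(X)\to Y$ and a plot $d\In{U}X$, step (c) says $d\colon U\to\TD(X)$ is continuous, hence $f\circ d\colon U\to Y$ is continuous, i.e.\ $f\circ d\In{U}\DT(Y)$; this shows $f\colon X\to\DT(Y)$ is smooth. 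These two assignments are mutually inverse because both are the identity on underlying set maps, and naturality in $X$ and $Y$ is immediate for the same reason.

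The only step that is not a pure unravelling of definitions is (c), and even it is essentially the defining property of a final topology; it is what forces the choice ``plots of $\DT(Y)=$ continuous maps'' to produce a right adjoint rather than merely a functor. I expect no genuine obstacle — the result is a clean instance of the general pattern whereby a concrete category whose plots come from testing against objects of $\OR$ admits a topological ``realization/underlying'' adjunction determined by the final topology on the image of the testing family.
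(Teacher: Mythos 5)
Your proof is correct and complete: the paper itself gives no argument for this theorem (it is cited to \cite{SYH}), and your four steps are exactly the standard verification, with the key point correctly identified as the defining universal property of the final ($D$-)topology, which makes plots continuous into $\TD(X)$ and yields the identity-on-underlying-maps bijection $\Top(\TD(X),Y)\cong\Diff(X,\DT(Y))$. Nothing is missing; naturality is indeed automatic since both functors act as the identity on underlying set maps.
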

As a consequence, the $D$-topology of a quotient diffeological space
of $X$ is same as the quotient topology of $\TD(X)$. However, the
$D$-topology of a diffeological subspace of $X$ may be different
from the sub-topology of $\TD(X)$.

For more detailed discussion of the $D$-topology of diffeological
spaces, see~\cite[Chapter~2]{I} and~\cite{CSW}.

\bigskip{}

Now, let's turn to Frölicher spaces. In several spaces of functional
analysis (like all those listed in Section \ref{sub:Categorical-frameworks}),
smooth figures are ``generated by smooth functionals''. Therefore,
smoothness can also be tested using smooth functionals, similarly
as using projections in finite dimensional Euclidean spaces. In Frölicher
spaces, we focus our attention also to smooth functions of the type
$X\ra\R$.
\begin{defn}
\label{def:FrolicherSpace}A Frölicher space $(\mathcal{C},X,\F)$
is a set $X$ together with two specified families of functions 
\[
\mathcal{C}\subseteq\Set(\R,X)\text{ and }\F\subseteq\Set(X,\R)
\]
 with the following smooth compatibility conditions: 
\[
c:\R\ra X\in\mathcal{C}\text{ iff }l\circ c\in\Coo(\R,\R)\ \forall l\in\F,
\]
 and 
\[
f:X\ra\R\in\F\text{ iff }l\circ c\in\Coo(\R,\R)\ \forall c\in\mathcal{C}.
\]

\end{defn}
\medskip{}

\begin{defn}
A morphism $f:(\mathcal{C}_{X},X,\F_{X})\ra(\mathcal{C}_{Y},Y,\F_{Y})$
between two Frölicher spaces is a function $f:X\ra Y$ such that one
of the following equivalent conditions hold: 
\begin{enumerate}[leftmargin={*},label=(\roman*),align=left]
\item $f\circ c\in\mathcal{C}_{Y}\ \forall c\in\mathcal{C}_{X}$, 
\item $l\circ f\in\F_{X}\ \forall l\in\F_{Y}$, 
\item $l\circ f\circ c\in\Coo(\R,\R)\ \forall c\in\mathcal{C}_{X}\text{ and }\forall l\in\F_{Y}$.
\end{enumerate}
\end{defn}
All Frölicher spaces and their morphisms form a category, which will
be denoted by $\Fr$. Here is a list of basic properties for Frölicher
spaces. For details, we refer readers to \cite{FK}.
\begin{rem}
\label{rem:propFrSpaces}
\begin{enumerate}[leftmargin={*},label=(\roman*),align=left]
\item Every smooth manifold $M$ is automatically a Frölicher space with
$\mathcal{C}=\Coo(\R,M)$ and $\F=\Coo(M,\R)$. Without specification,
we always assume a smooth manifold with this Frölicher structure when
viewed as a Frölicher space. Moreover, this gives a full embedding
of $\Man$ in $\Fr$.
\item Let $\{\iota_{i}:X_{i}\ra Y\}_{i\in I}$ be a family of functions
from the underlying sets of the Frölicher spaces $(\mathcal{C}_{i},X_{i},\F_{i})$
to a fixed set $Y$. Let $\F_{Y}=\{l:Y\ra\R\mid l\circ\iota_{i}\in\F_{i}\ \forall i\}$
and let $\mathcal{C}_{Y}=\{c:\R\ra Y\mid l\circ c\in\Coo(\R,\R)\ \forall l\in\F_{Y}\}$.
Then $(\mathcal{C}_{Y},Y,\F_{Y})$ is a Frölicher space and all these
maps $\iota_{i}$ are morphisms between Frölicher spaces. We call
this Frölicher structure on $Y$ the \emph{final Frölicher structure
associated to }$\{\iota_{i}:X_{i}\ra Y\}_{i\in I}$.\\
 Dually, let $\{p_{j}:X\ra Y_{j}\}_{j\in J}$ be a family of functions
from a fixed set $X$ to the underlying sets of the Frölicher spaces
$(\mathcal{C}_{j},Y_{j},\F_{j})$. Let $\mathcal{C}_{X}=\{c:\R\ra X\mid p_{j}\circ c\in\mathcal{C}_{j}\ \forall j\}$
and let $\F_{X}=\{l:X\ra\R\mid l\circ c\in\Coo(\R,\R)\ \forall c\in\mathcal{C}_{X}\}$.
Then $(\mathcal{C}_{X},X,\F_{X})$ is a Frölicher space and all these
maps $p_{j}$ are morphisms between Frölicher spaces. We call this
Frölicher structure on $X$ the \emph{initial Frölicher structure
associated to }$\{p_{j}:X\ra Y_{j}\}_{j\in J}$. 
\item The category $\Fr$ is complete and cocomplete. In more detail, let
$G:\mathcal{I}\ra\Fr$ be a functor from a small category $\mathcal{I}$.
Write $|-|:\Fr\ra\Set$ for the forgetful functor. Then both $\lim G$
and $\colim G$ exist in $\Fr$ as lifting and co-lifting of limits
and colimits in $\Set$. In more detail, $|\lim G|=\lim|G|$ and the
Frölicher structure of $\lim G$ is the initial Frölicher structure
associated to the universal cone $\{\lim|G|\ra|G(i)|\}_{i\in\mathcal{I}}$
in $\Set$; dually $|\colim G|=\colim|G|$ and the Frölicher structure
of $\colim G$ is the final Frölicher structure associated to the
universal co-cone $\{|G(i)|\ra\colim|G|\}_{i\in\mathcal{I}}$. In
the category of Hausdorff Frölicher spaces, limits and colimits of
smooth manifolds that already exist in $\Man$ are preserved by the
embedding $\Man\ra\HFr$ (see Thm.~\ref{thm:preservecolimits}). 
\item The category $\Fr$ is Cartesian closed. In more detail, given Frölicher
spaces $X$ and $Y$, set $\mC=\left\{ c:\R\ra\Fr(X,Y)\mid c^{\vee}\in\Fr(\R\times X,Y)\right\} $,
and $\F=\{l:\Fr(X,Y)\ra\R\mid l\circ c\in\Coo(\R,\R)\ \forall c\in\mathcal{C}\}$.
Then one can show that $(\mathcal{C},\Fr(X,Y),\F)$ is a Frölicher
space. Without specification, $\Fr(X,Y)$ is always equipped with
this Frölicher structure when viewed as a Frölicher space. 
\item \label{enu:functorsFrDiff}Given a Frölicher space $(\mathcal{C},Y,\F)$,
let $\D_{U}=\{d:U\ra Y\mid l\circ d\in\Coo(U,\R)\ \forall l\in\F\}$.
Then $\DF(\mC,Y,\F):=(Y,\D=\cup_{U\in\OR}\D_{U})$ is a diffeological
space. This defines a full embedding $\DF:\Fr\ra\Diff$. So, there
will be no confusion to call smooth maps also the morphisms between
Frölicher spaces. Moreover, one can show that $\DF(\Fr(A,B))=\Coo(\DF(A),\DF(B))$
as diffeological spaces. This embedding functor has a left adjoint
given as follows. For a diffeological space $X=(|X|,\D)$, let $\F=\Coo(X,\R)$
and let $\mathcal{C}=\{c:\R\ra X\mid l\circ c\in\Coo(\R,\R)\ \forall l\in\F\}$.
Then $\FD(X):=(\mathcal{C},|X|,\F)$ is a Frölicher space. Both functors
$\DF$ and $\FD$ are identities on the morphisms. For more discussion
on the relationship between diffeological spaces and Frölicher spaces,
see~\cite{S,BIKW}.
\end{enumerate}
\end{rem}

\subsection{Definition and examples of functionally generated diffeologies}

Now, let's introduce a special class of diffeological spaces called
\emph{functionally generated spaces}, which are like Frölicher spaces
but with locally defined smooth functionals. The idea is that, in
this type of spaces, we can know whether a continuous function $d\in\Top(U,\TD(X))$
is a figure by testing for smoothness of the composition with a given
family of locally defined smooth functionals $l:(A\prec X)\ra\R$.
\begin{defn}
\label{def:functionallyGeneratedSpace} Let $X=(|X|,\D)$ be a diffeological
space. Let $\F=\left\{ \F_{A}\right\} _{A\in\tau_{X}}$ be a $\tau_{X}$-family
of smooth functions, that is, for each $A\in\tau_{X}$ 
\[
\F_{A}\subseteq\Coo(A\prec X,\R).
\]
 We say that $\F$ \emph{generates} $\D$ if for any open set $U\in\OR$
and any continuous map $d\in\Top(U,\TD(X))$, the condition 
\begin{equation}
\forall A\in\tau_{X}\,\forall l\in\F_{A}:\ l\circ d|_{d^{-1}(A)}\in\Coo(d^{-1}(A),\R)\label{eq:compFunctionals}
\end{equation}

\noindent implies $d\In{U}X$, i.e., $d$ is a plot of $X$. Any map
$l\in\Coo(A\prec X,\R)$ is called a \emph{smooth functional} of the
space $X$. Finally, we say that the diffeological space $X$ is \emph{functionally
generated} if its diffeology can be generated by some family $\F$,
and we denote with $\Fgen$ the full subcategory of $\Diff$ of all
functionally generated diffeological spaces.
\end{defn}
\noindent If the codomain of a continuous map $f:X\ra Y$ is functionally
generated, then we can also test the smoothness of $f$ by smooth
functionals of $Y$:
\begin{thm}
\noindent \label{thm:codomFgenTestFunctionals}Let $f:|X|\ra|Y|$
be a map with $X\in\Diff$ and $Y\in\Fgen$. Assume that the diffeology
of $Y$ is generated by the family $\left\{ \F_{A}\right\} _{A\in\tau_{Y}}$.
Then the following are equivalent 
\begin{enumerate}[leftmargin={*},label=(\roman*),align=left]
\item \label{enu:codomFgenTestFunct_old}$f\in\Coo(X,Y)$ 
\item \label{enu:codomFgenTestFunct_new}$f\in\Top(\TD(X),\TD(Y))$ and
\begin{equation}
\forall A\in\tau_{Y}\,\forall l\in\F_{A}:\ l\circ f|_{f^{-1}(A)}\in\Coo(f^{-1}(A)\prec X,\R).\label{eq:codomFgenTestFunct_cond}
\end{equation}

\end{enumerate}
\end{thm}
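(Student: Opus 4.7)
The plan is to prove both implications by unwinding the definition of ``generates'' from Def.~\ref{def:functionallyGeneratedSpace} and using the interaction between plots, the $D$-topology, and subspace diffeologies on $D$-open subsets.

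For the direction \ref{enu:codomFgenTestFunct_old}$\Rightarrow$\ref{enu:codomFgenTestFunct_new}: assuming $f\in\Coo(X,Y)$, functoriality of $\TD$ gives $f\in\Top(\TD(X),\TD(Y))$. Fix $A\in\tau_Y$ and $l\in\F_A$. Since $f$ is $D$-continuous, $f^{-1}(A)\in\tau_X$, so the subspace $(f^{-1}(A)\prec X)$ makes sense. I would then observe that the restriction $f|_{f^{-1}(A)}:(f^{-1}(A)\prec X)\ra(A\prec Y)$ is smooth: any plot of $(f^{-1}(A)\prec X)$ is a plot of $X$ whose image lies in $f^{-1}(A)$, so $f$ sends it to a plot of $Y$ with image in $A$, which is precisely a plot of $(A\prec Y)$ by definition of the sub-diffeology. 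Post-composing with $l\in\Coo(A\prec Y,\R)$ yields \eqref{eq:codomFgenTestFunct_cond}.

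For the direction \ref{enu:codomFgenTestFunct_new}$\Rightarrow$\ref{enu:codomFgenTestFunct_old}: pick an arbitrary plot $d\In{U}X$; the goal is $f\circ d\In{U}Y$. Since $d$ is $D$-continuous (functoriality of $\TD$) and $f\in\Top(\TD(X),\TD(Y))$ by hypothesis, $f\circ d\in\Top(U,\TD(Y))$. By Def.~\ref{def:functionallyGeneratedSpace} applied to $Y$, it now suffices to check that for every $A\in\tau_Y$ and every $l\in\F_A$, the restriction $l\circ(f\circ d)|_B$ is smooth on $B:=(f\circ d)^{-1}(A)=d^{-1}(f^{-1}(A))$, which is open in $U$ by continuity. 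The key observation is that $d|_B$ factors as a plot of the subspace $(f^{-1}(A)\prec X)$: indeed, $f^{-1}(A)\in\tau_X$ by the continuity hypothesis on $f$, the presheaf axiom gives $d|_B\In{B}X$, and the image lies in $f^{-1}(A)$, which characterizes the plots of the sub-diffeology. Composing $d|_B$ with the smooth functional $l\circ f|_{f^{-1}(A)}$ supplied by \eqref{eq:codomFgenTestFunct_cond} yields the required smoothness.

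The only mildly delicate point, and the main bookkeeping obstacle, is managing the interplay between $D$-openness of $f^{-1}(A)$, the presheaf/sheaf axioms that allow us to restrict the plot $d$ to $B$, and the characterization of plots of the sub-diffeology $(f^{-1}(A)\prec X)$ as precisely those plots of $X$ landing in $f^{-1}(A)$. Once these three ingredients are assembled, both directions reduce to direct substitution into the generating condition.
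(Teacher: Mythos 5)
Your proposal is correct and follows essentially the same route as the paper: the forward direction is the routine one (the paper dismisses it as clear), and for the converse you take an arbitrary plot $d\In{U}X$, use continuity to get $f\circ d\in\Top(U,\TD(Y))$, restrict $d$ to $d^{-1}(f^{-1}(A))$ as a plot of $(f^{-1}(A)\prec X)$, compose with the functional $l\circ f|_{f^{-1}(A)}$ given by \eqref{eq:codomFgenTestFunct_cond}, and invoke the generating condition for $Y$ — exactly the composition $l\circ f|_{f^{-1}(A)}\circ d|_{d^{-1}(f^{-1}(A))}=l\circ(f\circ d)|_{(f\circ d)^{-1}(A)}$ used in the paper's proof.
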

\begin{proof}
Since the implication \ref{enu:codomFgenTestFunct_old} $\Rightarrow$
\ref{enu:codomFgenTestFunct_new} is clear, we only prove the opposite
one. For any $d\In{U}X$, since $d\in\Top(U,\TD(X))$, $f\circ d\in\Top(U,\TD(Y))$.
Then for any $A\in\tau_{Y}$ and any $l\in\F_{A}$, by \eqref{eq:codomFgenTestFunct_cond}
we get $l\circ f|_{f^{-1}(A)}\in\Coo(f^{-1}(A)\prec X,\R)$ and hence
$l\circ f|_{f^{-1}(A)}\circ d|_{d^{-1}(f^{-1}(A))}=l\circ\left(f\circ d\right)|_{(f\circ d)^{-1}(A)}\in\Coo((f\circ d)^{-1}(A),\R)$.
Since the diffeology of $Y$ is generated by $\left\{ \F_{A}\right\} _{A\in\tau_{Y}}$,
the conclusion $f\circ d\In{U}Y$ follows.
\end{proof}
Here is a list of basic properties and examples of functionally generated
spaces:
\begin{rem}
\label{rem:equiv}\ 
\begin{enumerate}[leftmargin={*},label=(\roman*),align=left]
\item \label{enu:local}The notion of functionally generated space is of
local nature, i.e., we can equivalently say that $\F$ generates $\D$
if for any $U\in\OR$ and any $d\in\Set(U,|X|)$, the condition 
\begin{multline*}
\hspace{10mm}\forall u\in U\,\forall A\in\tau_{X}\,\forall l\in\F_{A}:\ d(u)\in A\Rightarrow\\
\exists V\text{ neigh. of }u:\ d(V)\subseteq A\ ,\ l\circ d|_{V}\in\Coo(V,\R)
\end{multline*}
 implies $d\In{U}X$. 
\item We can also equivalently ask that $\F_{A}\subseteq\Set(A,\R)$ and
for all continuous $d\in\Top(U,\TD(X))$ and any open set $U\in\OR$
we have $d\in_{U}X$ if and only if \eqref{eq:compFunctionals} holds.
Therefore, smooth functionals determine completely the figures (plots)
of the underlying diffeological space, i.e., if $\D_{1}$, $\D_{2}$
are diffeologies on $|X|$ and $\F$ generates both $\D_{1}$ and
$\D_{2}$, then $\D_{1}=\D_{2}$. 
\item \label{enu:FMax}Let $\F$ generate $\D$. Define $\M_{A}^{X}:=\Coo(A\prec X,\R)$
for any $A\in\tau_{X}$. Then $\M^{X}$ also generates $\D$. Of course,
$\M^{X}$ is the maximum family of smooth functionals which can be
used to test whether a continuous map $d\in\Top(U,\TD(X))$ is a figure
or not, and the interesting problem is to find a smaller family $\F\subseteq\M^{X}$
generating the same set of plots of $X$.
\item \label{enu:FrisFgen}The diffeology generated by a Frölicher space
$(\mathcal{C},X,\F)$ is functionally generated by globally defined
smooth functionals. That is, it suffices to consider $\tilde{\F}$
defined by $l\in\tilde{\F}_{A}$ if and only if $A=X$ and $l\in\F$.
Therefore, the functor $\FD:\Fr\ra\Diff$ has values in $\Fgen$.
In particular, every smooth manifold and every discrete diffeological
space is functionally generated. However, there are functionally generated
spaces which do not come from Frölicher spaces; see Ex.~\ref{eg:nonfgen}.
\end{enumerate}
\end{rem}
In a functionally generated space, besides the usual $D$-topology
$\tau_{X}$, we can consider the initial topology $\tau_{\F}$ with
respect to all smooth functionals $\bigcup_{A\in\tau_{X}}\F_{A}$
(which is analogous to the weak topology, see e.g.\ \cite{KN}).
In particular, the topology $\tau_{\M^{X}}$ is called the \emph{functional
topology} on \emph{$X$. }In general, $\tau_{\F}$ is coarser than
the $D$-topology, see Ex.~\ref{eg:nonfgen}, but in every functionally
generated space the functional topology and the $D$-topology coincide,
as stated in the following theorem. 
\begin{thm}
\label{thm:functionaltopology} Let $\F$ generate the diffeology
of the space $X\in\Diff$. Then $\tau_{\F}\subseteq\tau_{X}$. If
$\F_{A}\neq\emptyset$ for all $A\in\tau_{X}$, then $\tau_{\F}=\tau_{X}$.
In particular, $\tau_{\M^{X}}=\tau_{X}$.\end{thm}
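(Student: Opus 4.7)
The plan is to prove $\tau_\F\subseteq\tau_X$ unconditionally, then establish the reverse inclusion under the nonemptiness hypothesis, and finally verify the ``in particular'' clause by exhibiting a constant functional in each $\M^X_A$.

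For the forward inclusion I would work with the natural subbasis of $\tau_\F$ consisting of sets $l^{-1}(V)$ where $l\in\F_A$ for some $A\in\tau_X$ and $V\subseteq\R$ is open, and show each such set lies in $\tau_X$. The argument has two steps: first, since $l\in\Coo(A\prec X,\R)$ is smooth, it is continuous for the $D$-topology of $A\prec X$, so $l^{-1}(V)$ is $D$-open in $A\prec X$; second, every $D$-open subset of $A\prec X$ is $D$-open in $X$ whenever $A$ itself is $D$-open in $X$. The second step reduces to a plot-localization argument: given a plot $d\colon U\ra X$ and a point $u\in d^{-1}(l^{-1}(V))$, the $D$-openness of $A$ gives a neighborhood $W\subseteq U$ of $u$ with $d(W)\subseteq A$; the restriction $d|_W$ is then a plot of $A\prec X$, and the $D$-openness of $l^{-1}(V)$ there produces the desired open subneighborhood of $u$ inside $d^{-1}(l^{-1}(V))$.

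For the reverse inclusion, the decisive observation is that any $B\in\tau_X$ with $\F_B\neq\emptyset$ is itself a subbasic open set of $\tau_\F$: picking any $l\in\F_B$, one has $B=l^{-1}(\R)\in\tau_\F$ by definition. Thus under the hypothesis $\F_A\neq\emptyset$ for all $A\in\tau_X$ one obtains $\tau_X\subseteq\tau_\F$ at once. The ``in particular'' clause then follows because the zero (or any real) constant functional always belongs to $\Coo(A\prec X,\R)=\M^X_A$, making the nonemptiness hypothesis automatic for the maximal family.

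The only delicate step in the whole argument is the reduction from $D$-openness in the sub-diffeological space $A\prec X$ to $D$-openness in $X$, since these topologies can differ in general (as noted in the paper immediately after the $\TD\dashv\DT$ adjunction). However, the hypothesis $A\in\tau_X$ is precisely what makes the plot-localization argument go through, so this obstacle is conceptual rather than technical.
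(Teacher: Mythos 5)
Your proposal is correct and follows essentially the same route as the paper: test the subbasic sets $l^{-1}(V)$ against plots of $X$ (your two-step factorization through $D$-continuity of $l$ on $(A\prec X)$ plus the observation that $D$-open subsets of a $D$-open subspace are $D$-open in $X$ is just a repackaging of the paper's direct computation that $d^{-1}(l^{-1}(V))=(l\circ d|_{d^{-1}(A)})^{-1}(V)$ is open in $U$), and then use $A=l^{-1}(\R)$ for the reverse inclusion. Your explicit remark that constant functionals make $\M^{X}_{A}\neq\emptyset$ automatic is a small but welcome clarification the paper leaves implicit.
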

\begin{proof}
The topology $\tau_{\F}$ has the set 
\[
\{l^{-1}(V)\mid l\in\F_{A},V\in\tau_{\R},A\in\tau_{X}\}
\]
 as a subbase. For any $l^{-1}(V)$ in this subbase and any plot $d\In{U}X$
the set 
\[
d^{-1}(l^{-1}(V))=(l\circ d|_{d^{-1}(A)})^{-1}(V)
\]
 is open in $U$ since $l\circ d|_{d^{-1}(A)}$ is smooth by Def.
\ref{def:functionallyGeneratedSpace}. Hence $\tau_{\F}\subseteq\tau_{X}$.
Vice versa, if $A\in\tau_{X}$ and $l\in\F_{A}\ne\emptyset$ is any
functional, then $A=l^{-1}(\R)\in\tau_{\F}$. So $\tau_{X}\subseteq\tau_{\F}$
if $\F_{A}\neq\emptyset$ for all $A\in\tau_{X}$.
\end{proof}
Here are some examples of diffeological spaces which are not functionally
generated.
\begin{example}
\label{ex:nonfgen}\ 
\begin{enumerate}[leftmargin={*},label=(\roman*),align=left]
\item \label{ex:irrtorus} Let $(X,\D)$ be the irrational torus $\R/(\Z+\theta\Z)$,
for some $\theta\in\R\setminus\Q$, with the quotient diffeology $\D$;
see \cite{I}. Then $(X,\D)$ is not functionally generated. Indeed,
since $\Z+\theta\Z$ is dense in $\R$, the $D$-topology $\tau_{X}$
is indiscrete, that is, $\tau_{X}=\{\emptyset,X\}$. Hence, every
smooth map $X\ra\R$ is constant. Therefore, for any function $d:U\ra X$,
for any $l\in\M^{X}$, the composition $l\circ d$ is constant, hence
smooth. Therefore, there does not exist a family $\F$ that generates
$\D$. 
\item For any $n\geq2$, let $\R_{\text{w}}^{n}=(\R^{n},\D^{\text{w}})$
be $\R^{n}$ with the wire diffeology $\D^{\text{w}}$; see \cite{I}.
Then the $D$-topology of $\R_{\text{w}}^{n}$ is the usual Euclidean
topology, and by Boman's theorem \cite{Bom}, $\Coo(A\prec\R_{\text{w}}^{n},\R)=\Coo(A\prec\R^{n},\R)=\M_{A}^{\R_{\text{w}}^{n}}$
so that if $\R_{\text{w}}^{n}$ is functionally generated, we would
have $1_{\R^{n}}\In{\R^{n}}\R_{\text{w}}^{n}$, which is false for
the wire diffeology. Therefore, $\R_{\text{w}}^{n}$ is not functionally
generated.
\end{enumerate}
\end{example}

\subsection{Categorical properties of functionally generated spaces}

In this subsection, we are going to prove some nice categorical properties
for the category $\Fgen$ of all functionally generated spaces with
smooth maps, that is, $\Fgen$ is complete, cocomplete and Cartesian
closed.

\medskip{}

Although the family $\F$ that generates a diffeology is a $\tau_{X}$-family
of smooth functions, in practice, we usually only need a $\mathcal{B}$-family
with $\mathcal{B}\subseteq\tau_{X}$. In other words, $\F_{A}$ can
be any subset (in particular, the empty set) of $\Coo(A\prec X,\R)$
if $A\in\tau_{X}\setminus\mathcal{B}$. We have already met such examples
in~\ref{enu:FrisFgen} of Rem. \ref{rem:equiv}. Here is another
big class of examples:
\begin{thm}
\label{thm:initial}Let $\{p_{i}:X\ra|X_{i}|\}_{i\in I}$ be a family
of functions from a given set $X$ to the underlying sets of the diffeological
spaces $X_{i}=(|X_{i}|,\mathcal{D}^{i})$. Assume that each $\D^{i}$
is generated by $\F^{i}$, and let $\D$ be the initial diffeology
on $X$ associated to this family (i.e., $d\in_{U}\D\text{ iff }p_{i}\circ d\in_{U}\D^{i}\ \forall i$).
For all $A\in\tau_{(X,\D)}$ set $l\in\F_{A}$ iff $l\in\Coo(A\prec X,\R)$
and 
\[
\exists i\in I\,\exists B\in\tau_{X_{i}}\,\exists\lambda\in\F_{B}^{i}:\ A=p_{i}^{-1}(B)\ ,\ l=\lambda\circ p_{i}|_{A}.
\]
 Then the diffeology $\D$ is generated by $\F$.
\end{thm}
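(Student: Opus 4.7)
The plan is to use the defining characterization of the initial diffeology: $d\In{U}X$ if and only if $p_i\circ d\In{U}X_i$ for every $i\in I$. Given a continuous $d\in\Top(U,\TD(X))$ satisfying the generating condition \eqref{eq:compFunctionals} with respect to the prescribed $\F$, the goal is to reduce smoothness of $d$ in $X$ to smoothness of each composition $p_i\circ d$ in $X_i$, and then invoke the hypothesis that $\F^i$ generates $\D^i$.

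First I would verify that $\F$ is actually a $\tau_X$-family of smooth functionals on $X$, i.e.\ that every $l=\lambda\circ p_i|_A\in\F_A$ (with $\lambda\in\F_B^i$ and $A=p_i^{-1}(B)$) really satisfies $A\in\tau_X$ and $l\in\Coo(A\prec X,\R)$. Since each $p_i$ is smooth by the very definition of the initial diffeology, $p_i:\TD(X)\ra\TD(X_i)$ is continuous, whence $A\in\tau_X$. Moreover, for any plot $e\In{V}(A\prec X)$ the composition $p_i\circ e$ is a plot of $X_i$ landing in $B$, hence a plot of $B\prec X_i$; since $\lambda\in\F_B^i\subseteq\Coo(B\prec X_i,\R)$, the composition $l\circ e=\lambda\circ(p_i\circ e)$ is smooth on $V$, so $l\in\Coo(A\prec X,\R)$.

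For the main argument, fix $d\in\Top(U,\TD(X))$ satisfying \eqref{eq:compFunctionals} for $\F$. For every $i\in I$, continuity of $p_i:\TD(X)\ra\TD(X_i)$ gives $p_i\circ d\in\Top(U,\TD(X_i))$. To deduce $p_i\circ d\In{U}X_i$ from the fact that $\F^i$ generates $\D^i$, it suffices to check that, for each $B\in\tau_{X_i}$ and each $\lambda\in\F_B^i$, the restriction $\lambda\circ(p_i\circ d)|_{(p_i\circ d)^{-1}(B)}$ is smooth. Setting $A:=p_i^{-1}(B)\in\tau_X$ and $l:=\lambda\circ p_i|_A\in\F_A$, one has $d^{-1}(A)=(p_i\circ d)^{-1}(B)$ and
\[
\lambda\circ(p_i\circ d)|_{(p_i\circ d)^{-1}(B)}=l\circ d|_{d^{-1}(A)},
\]
which is smooth by the hypothesis \eqref{eq:compFunctionals}. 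Therefore $p_i\circ d\In{U}X_i$ for every $i\in I$, and the initial-diffeology characterization yields $d\In{U}X$, as required.

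The proof presents no serious obstacle; the only points needing care are the bookkeeping that each $\F_A$ really sits inside $\Coo(A\prec X,\R)$ and that $D$-topology continuity transfers through each $p_i$, both of which follow from the functoriality of $\TD$ applied to the smooth projections $p_i$.
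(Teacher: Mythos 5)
Your proposal is correct and is exactly the argument the paper has in mind: the paper only remarks that the theorem ``follows directly from Def.~\ref{def:functionallyGeneratedSpace}'', and your write-up fills in precisely those routine steps (continuity of the $p_i$ via the functor $\TD$, smoothness of each $\lambda\circ p_i|_{p_i^{-1}(B)}$, and the identification $\lambda\circ(p_i\circ d)|_{(p_i\circ d)^{-1}(B)}=l\circ d|_{d^{-1}(A)}$ before invoking that $\F^{i}$ generates $\D^{i}$ and the initial-diffeology characterization). No gaps; this matches the intended proof.
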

The proof follows directly from Def. \ref{def:functionallyGeneratedSpace}.
Note that $\F_{A}=\emptyset$ if $A$ is not of the form $A=p_{i}^{-1}(B)$
for some $i\in I$ and $B\in\tau_{X_{i}}$, so that we are essentially
considering only smooth functionals defined on $D$-open subsets in
$\mathcal{B}=\{p_{i}^{-1}(B)\mid B\in\tau_{X_{i}},i\in I\}\subseteq\tau_{(X,\D)}$.
In this sense, $\F$ is also the smallest family of smooth functionals
generating $(X,\D)$ and containing all the smooth functionals of
the form $\lambda\circ p_{i}|_{p_{i}^{-1}(B)}$.

\noindent In particular, every subset of a functionally generated
space with the sub-diffeology is again functionally generated. Analogously,
every product of functionally generated spaces with the product diffeology
is functionally generated, so that
\begin{cor}
\noindent \label{cor:FgenComplete}The category $\Fgen$ is complete.
\end{cor}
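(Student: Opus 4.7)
The plan is to reduce completeness of $\Fgen$ to the completeness of $\Diff$ already recalled in Rem.~\ref{rem:propDiffSpaces}\ref{enu:limitsColimitsDiff}, and then to check that the resulting limit object is functionally generated using Thm.~\ref{thm:initial}. Since $\Fgen$ is a full subcategory of $\Diff$ by definition, it is enough to show that for every small diagram $G:\mathcal{I}\ra\Fgen$ the limit computed in $\Diff$ already lies in $\Fgen$: once this is established, any cone from an object $Y\in\Fgen$ to $G$ is automatically a cone in $\Diff$, factors uniquely through $\lim G$ in $\Diff$, and the mediating map is in $\Fgen$ simply because $Y$ and $\lim G$ are.

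First I would recall that, by Rem.~\ref{rem:propDiffSpaces}\ref{enu:limitsColimitsDiff}, the underlying set of $\lim G$ is $\lim |G|$ and its diffeology is the initial diffeology associated to the universal cone $\{p_i:\lim |G|\ra|G(i)|\}_{i\in\mathcal{I}}$. Now each $G(i)\in\Fgen$ carries a generating family $\F^{i}=\{\F^{i}_{B}\}_{B\in\tau_{G(i)}}$, so the hypotheses of Thm.~\ref{thm:initial} are satisfied for the family $\{p_i\}_{i\in\mathcal{I}}$. Applying that theorem produces an explicit family
\[
\F_{A}=\bigl\{\lambda\circ p_i|_{A}\mid i\in\mathcal{I},\ B\in\tau_{G(i)},\ \lambda\in\F^{i}_{B},\ A=p_i^{-1}(B)\bigr\}
\]
of smooth functionals on $\lim G$ which generates its diffeology. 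Hence $\lim G\in\Fgen$.

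Combining the two steps above gives the corollary: every small diagram in $\Fgen$ admits a limit, realized by the same underlying set and the same diffeology as the corresponding limit in $\Diff$. The whole argument is essentially a one-line invocation of Thm.~\ref{thm:initial}, so there is no real obstacle; the only thing to double-check is the full-subcategory argument which guarantees the universal property lifts from $\Diff$ to $\Fgen$. One could alternatively give a more concrete proof via products and equalizers, using the two special cases stated just before the corollary (products of functionally generated spaces and sub-diffeologies of functionally generated spaces are functionally generated), but the initial-diffeology approach is cleaner and handles all small limits uniformly.
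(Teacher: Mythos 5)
Your proposal is correct and follows essentially the paper's route: the paper also deduces the corollary from Thm.~\ref{thm:initial} together with the fact that limits in $\Diff$ carry the initial diffeology of the universal cone, merely phrasing it through the special cases of products and sub-diffeologies (i.e.\ products and equalizers) rather than applying the theorem to the whole limit cone at once. Your explicit check that the universal property lifts along the full embedding $\Fgen\hookrightarrow\Diff$ is a point the paper leaves implicit, and it is handled correctly.
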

\noindent Similarly, one can show that every coproduct of functionally
generated spaces with the coproduct diffeology is functionally generated.

Let $f,g:X\ra Y$ be smooth maps between functionally generated spaces.
In general, the coequalizer in $\Diff$ may not be functionally generated.
For example, let $X=\R$ be equipped with the discrete diffeology,
and let $Y=\R$ be equipped with the standard diffeology. Let $\theta$
be some irrational number. Fix a representative in $\R$ for each
element in $\R/(\Z+\theta\Z)$, i.e., a function $\rho:\R/(\Z+\theta\Z)\ra\R$
such that $\rho(c)\in c$ for all $c\in\R/(\Z+\theta\Z)$. Let $f:X\ra Y$
be the identity function and let $g:X\ra Y$ be the function defined
by $g(r):=\rho(c)$ for all $r\in c\in\R/(\Z+\theta\Z)$, i.e., sending
every point in the subset $\rho(c)+\Z+\theta\Z$ to the fixed representative
$\rho(c)\in\R$. It is clear that both $f$ and $g$ are smooth because
we equip $X$ with locally constant figures, and the coequalizer in
$\Diff$ is the irrational torus because the equivalence relation
of $\R/(\Z+\theta\Z)$ is the smallest one where $f(r)=r$ is equivalent
to $g(r)=\rho(c)$ for $r\in c$. We already know from~\ref{ex:irrtorus}
of Ex.~\ref{ex:nonfgen} that the irrational torus is not functionally
generated. However, we will show below that the category $\Fgen$
is cocomplete, and the coequalizer of the above diagram in $\Fgen$
is the underlying set of the irrational torus with the indiscrete
diffeology.

\medskip{}

Now, we want to see how to define a new functionally generated diffeology
starting from a diffeological space and a $\tau_{X}$-family of smooth
functionals.
\begin{defn}
\label{def:DF}Let $X=(|X|,\sigma)\in\Top$ be a topological space,
and let $\F=\left\{ \F_{B}\right\} _{B\in\mathcal{B}}$ with $\mathcal{B}\subseteq\sigma$
be a $\mathcal{B}$-family of functions, i.e., $\F_{B}\subseteq\Set(B,\R)$.
For $U\in\OR$, write $d\in\D\F_{U}$ (or $\D^{X}\F_{U}$ if we need
to show the dependence from $X$) if and only if $d\in\Top(U,X)$
and 
\[
\forall B\in\mathcal{B}\,\forall l\in\F_{B}:\ l\circ d|_{d^{-1}(B)}\in\Coo(d^{-1}(B),\R).
\]
 We set $\D\F:=\cup_{U\in\OR}\D\F_{U}$ and call $\hat{X}_{\F}:=(|X|,\D\F)$
the \emph{diffeological space generated by $X$ and $\F$}. If $Y\in\Diff$
is a diffeological space, we will always apply the above construction
with respect to the $D$-topology, i.e., with $X=\TD(Y)$ and considering
only smooth functions: $\F_{B}\subseteq\Coo(B\prec X,\R)$ for all
$B\in\mathcal{B}$.
\end{defn}
One can show directly from the definitions that
\begin{rem}
\label{rem:propDF}In the hypotheses of the previous Def. \ref{def:DF},
the following properties hold: 
\begin{enumerate}[leftmargin={*},label=(\roman*),align=left]
\item We can trivially extend the $\mathcal{B}$-family $\F$ to the whole
$\sigma$-family by setting $\F_{A}:=\emptyset$ if $A\notin\mathcal{B}$.
We will always assume to have extended $\F$ in this way; 
\item $\D\F$ is a diffeology on $|X|$; 
\item \label{enu:inclusion} For all $A\in\sigma$ and $l\in\F_{A}$, we
have $l\in\Coo(A\prec\hat{X}_{\F},\R)$; 
\item The diffeology $\D\F$ of $\hat{X}_{\F}$ is functionally generated
by $\F$. 
\end{enumerate}

Moreover, if $X=(|X|,\D)\in\Diff$ is a diffeological space, then 
\begin{enumerate}[leftmargin={*},label=(\roman*),align=left,start=5]
\item \label{enu:ext}$\D_{U}\subseteq\D\F_{U}$. Hence, $\Coo(Y\prec X,\R)\supseteq\Coo(Y\prec\hat{X}_{\M^{X}},\R)\,\forall Y\subseteq|X|$.
Together with the above Property \ref{enu:inclusion}, we have $\Coo(A\prec X,\R)=\Coo(A\prec\hat{X}_{\M^{X}},\R)\,\forall A\in\tau_{X}$; 
\item \label{enu:FgeneratesDequiv}$\F$ generates $\D$ if and only if
$\D_{U}=\D\F_{U}$; 
\item \label{enu:topOfFunctExtension}the $D$-topology on $\hat{X}_{\F}$
coincides with the $D$-topology $\tau_{X}$ on $X$; 
\item \label{enu:XisXhatF}If $\F$ generates $\D$, then $X=\hat{X}_{\F}$.
So $\widehat{\hat{X}_{\F}}_{\F}=\hat{X}_{\F}$ for any $\tau_{X}$-family
$\F$. And if $X$ is functionally generated, then $X=\hat{X}_{\M^{X}}$; 
\item $\D\M^{X}$ is the smallest functionally generated diffeology on $|X|$
containing $\D$. 
\end{enumerate}
\end{rem}
In particular, if we take $\F$ to be the empty $\tau_{X}$-family,
that is, $\F_{A}=\emptyset$ for all $A\in\tau_{X}$, then $\D\F_{U}=\Top(U,T_D(X))$.
\begin{thm}
\label{thm:fgtfulInDiffIsRightAdjoint}The inclusion functor $\xymatrix{\DFG:\Fgen\ar@{^{(}->}[r] & \Diff}
$ is a right adjoint of the functor $\FGD:X\in\Diff\mapsto\hat{X}_{\M^{X}}\in\Fgen$
(both functors act as identity on arrows). Therefore, for all $X\in\Diff$
and $Y\in\Fgen$, we have 
\[
\Coo(X,Y)=\Coo\left(\hat{X}_{\M^{X}},Y\right).
\]
 We call $\FGD(X)=(|X|,\D^{X}\M^{X})$ the \emph{functional extension}
of $X$.\end{thm}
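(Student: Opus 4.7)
The plan is to establish the adjunction $\FGD \dashv \DFG$ by proving the stated equality of hom-sets $\Coo(\FGD(X),Y) = \Coo(X,Y)$ (as subsets of $\Set(|X|,|Y|)$) for every $X\in\Diff$ and every $Y\in\Fgen$. Since both $\FGD$ and $\DFG$ act as identities on arrows, once this equality of hom-sets is established the natural bijection of the adjunction is the identity map on underlying functions, so naturality in $X$ and $Y$ is immediate (unit and counit are both identity maps on underlying sets). Functoriality of $\FGD$ on arrows will likewise be a corollary: given smooth $f:X\to X'$ in $\Diff$, the composition $X\xrightarrow{f}X'\xrightarrow{1}\hat{X'}_{\M^{X'}}$ is smooth in $\Diff$ (using that $1:X'\to\hat{X'}_{\M^{X'}}$ is smooth by Rem.~\ref{rem:propDF}\ref{enu:ext}), and then the main hom-set equality applied with $Y=\hat{X'}_{\M^{X'}}\in\Fgen$ upgrades this to smoothness of $f:\hat{X}_{\M^X}\to\hat{X'}_{\M^{X'}}$.

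For the inclusion $\Coo(\hat{X}_{\M^X},Y)\subseteq\Coo(X,Y)$, I use Rem.~\ref{rem:propDF}\ref{enu:ext}, which gives $\D_U\subseteq\D\M^X_U$ and hence the smoothness of $1_{|X|}:X\to\hat{X}_{\M^X}$. Precomposition with this identity sends any smooth $f:\hat{X}_{\M^X}\to Y$ to a smooth map $f:X\to Y$.

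The main step is the reverse inclusion $\Coo(X,Y)\subseteq\Coo(\hat{X}_{\M^X},Y)$. Let $f:X\to Y$ be smooth, and let $\{\F^Y_B\}_{B\in\tau_Y}$ be a $\tau_Y$-family generating the diffeology of $Y$ (which exists since $Y\in\Fgen$). I would apply Thm.~\ref{thm:codomFgenTestFunctionals} to $f$ regarded as a function $\hat{X}_{\M^X}\to Y$, which requires verifying two conditions. First, continuity of $f:\TD(\hat{X}_{\M^X})\to\TD(Y)$: by Rem.~\ref{rem:propDF}\ref{enu:topOfFunctExtension} we have $\TD(\hat{X}_{\M^X})=\TD(X)$, and $f:X\to Y$ smooth in $\Diff$ is continuous with respect to the $D$-topologies. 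Second, for each $B\in\tau_Y$ and each $l\in\F^Y_B$, I must show
\[
l\circ f|_{f^{-1}(B)}\in\Coo\bigl(f^{-1}(B)\prec\hat{X}_{\M^X},\R\bigr).
\]
By the continuity just established, $f^{-1}(B)\in\tau_X$. Since $f:X\to Y$ is smooth and $l\in\Coo(B\prec Y,\R)$, the composition $l\circ f|_{f^{-1}(B)}$ lies in $\Coo(f^{-1}(B)\prec X,\R)=\M^X_{f^{-1}(B)}$. Finally, the equality $\Coo(A\prec X,\R)=\Coo(A\prec\hat{X}_{\M^X},\R)$ for $A\in\tau_X$ from Rem.~\ref{rem:propDF}\ref{enu:ext} promotes this to smoothness on the subspace of $\hat{X}_{\M^X}$, concluding the hypothesis check of Thm.~\ref{thm:codomFgenTestFunctionals} and hence giving $f\in\Coo(\hat{X}_{\M^X},Y)$.

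The subtle point, and where one must take care, is the compatibility between topology and diffeology in applying Thm.~\ref{thm:codomFgenTestFunctionals}: one needs simultaneously that the $D$-topology is preserved under the functional-extension construction (so preimages of $D$-open sets are $D$-open in $\hat{X}_{\M^X}$) and that the notion of ``smooth functional on a $D$-open subset'' is invariant under the same construction. Both are exactly what Rem.~\ref{rem:propDF} packages, so the proof reduces to a clean bookkeeping argument with no remaining computational obstacle.
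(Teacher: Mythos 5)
Your proof is correct and follows essentially the route the paper intends: the paper's own proof is just the one-line remark that the theorem ``follows by applying Def.~\ref{def:functionallyGeneratedSpace}, Def.~\ref{def:DF} and Rem.~\ref{rem:propDF}'', and your argument is exactly the detailed unfolding of that sketch, using the smoothness criterion of Thm.~\ref{thm:codomFgenTestFunctionals} together with items \ref{enu:ext} and \ref{enu:topOfFunctExtension} of Rem.~\ref{rem:propDF} to get the hom-set equality, plus the routine observations on functoriality and naturality. No gaps.
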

\begin{proof}
It follows by applying Def. \ref{def:functionallyGeneratedSpace},
Def. \ref{def:DF} and Rem.~\ref{rem:propDF}.\end{proof}
\begin{cor}
\label{cor:FgenCocomplete}Let $G:\mathcal{I}\ra\Fgen$ be a functor
from a small category $\mathcal{I}$. Then 
\[
\FGD\left(\colim_{i\in\mathcal{I}}\DFG(G_{i})\right)\simeq\colim_{i\in\mathcal{I}}G_{i}.
\]
 Therefore, the category $\Fgen$ is cocomplete.\end{cor}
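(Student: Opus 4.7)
The plan is to exploit the adjunction $\FGD \dashv \DFG$ established in Thm.~\ref{thm:fgtfulInDiffIsRightAdjoint} together with the cocompleteness of $\Diff$ (Rem.~\ref{rem:propDiffSpaces}~\ref{enu:limitsColimitsDiff}). Since the construction $\colim_{i\in\mathcal{I}}\DFG(G_i)$ exists in $\Diff$, one may apply the functor $\FGD$ to obtain a candidate object $C:=\FGD\left(\colim_{i\in\mathcal{I}}\DFG(G_i)\right)$ in $\Fgen$. The goal is then to exhibit $C$ as the colimit of $G$ in $\Fgen$ by verifying its universal property against arbitrary $Y\in\Fgen$.

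The verification is essentially a sequence of natural bijections: for any $Y\in\Fgen$,
\[
\Coo(C,Y)=\Coo\!\left(\colim_{i}\DFG(G_i),\,\DFG(Y)\right)=\lim_i\Coo(\DFG(G_i),\DFG(Y))=\lim_i\Coo(G_i,Y).
\]
Here the first equality uses the adjunction from Thm.~\ref{thm:fgtfulInDiffIsRightAdjoint} (equivalently, the hom-set identity $\Coo(X,Y)=\Coo(\hat{X}_{\M^X},Y)$ applied to $X=\colim_i\DFG(G_i)$); the second equality is the universal property of the colimit in $\Diff$; and the third uses that $\DFG$ is a full and faithful embedding of $\Fgen$ into $\Diff$, which is immediate since $\Fgen$ is by construction a full subcategory of $\Diff$. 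Naturality in $Y$ and a standard Yoneda argument give the claimed isomorphism $C\simeq\colim_{i\in\mathcal{I}}G_i$ in $\Fgen$, and cocompleteness of $\Fgen$ follows.

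The only step requiring care is the first identification: one must make sure the adjunction of Thm.~\ref{thm:fgtfulInDiffIsRightAdjoint} is actually applied in the correct direction, i.e.\ that we are using $\Coo_{\Diff}(X,\DFG(Y))=\Coo_{\Fgen}(\FGD(X),Y)$ with $Y$ already functionally generated so that $\DFG(Y)$ has the same underlying set and diffeology as $Y$. This is where the Rem.~\ref{rem:propDF}~\ref{enu:XisXhatF} identity $\FGD(\DFG(G_i))=G_i$ implicitly enters when one wants to describe the colimit cocone: the structural maps $G_i\to C$ are obtained by applying $\FGD$ to the $\Diff$-cocone $\DFG(G_i)\to\colim_j\DFG(G_j)$ and then using $\FGD(\DFG(G_i))=G_i$. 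Everything else is formal category theory, so the main (mild) obstacle is simply keeping track of which functor is acting on which side of a hom-set.
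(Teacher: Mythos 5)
Your proposal is correct and follows essentially the same route as the paper: the paper simply quotes that the left adjoint $\FGD$ preserves colimits and then uses $\FGD(\DFG(G_i))=G_i$, while you unpack that preservation statement into the explicit chain of natural hom-set bijections plus a Yoneda argument. This is the standard proof of the same fact, so there is nothing substantively different to add.
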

\begin{proof}
Since $\FGD$ is a left adjoint, it preserves colimits 
\[
\FGD\left(\colim_{i\in\mathcal{I}}\DFG(G_{i})\right)\simeq\colim_{i\in\mathcal{I}}\FGD\left(\DFG(G_{i})\right)=\colim_{i\in\mathcal{I}}\FGD\left(G_{i}\right).
\]
 But $G_{i}\in\Fgen$ is functionally generated, so $\FGD(G_{i})=G_{i}$
from \ref{enu:XisXhatF} of Rem. \ref{rem:propDF}.\end{proof}
\begin{cor}
\label{cor:FGDPreservesSub} Let $X\in\Diff$ and let $S\in\tau_{X}$
be a $D$-open subset. Then $\FGD(S\prec X)=(S\prec\FGD(X))$.\end{cor}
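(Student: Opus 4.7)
The plan is to verify that the two diffeological spaces $\FGD(S\prec X)$ and $(S\prec\FGD(X))$, which clearly share the underlying set $S$, carry the same plots. Before comparing, I would isolate two preliminary facts whose content is exactly where the openness hypothesis $S\in\tau_X$ is used.

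\textbf{Preliminary (a):} $\tau_{S\prec X}=\{V\in\tau_X\mid V\subseteq S\}$, i.e., the $D$-topology on $S\prec X$ coincides with the subspace topology induced from $\TD(X)$. The inclusion $\supseteq$ is automatic. For $\subseteq$, suppose $V\subseteq S$ is $D$-open in $S\prec X$; for any plot $\alpha\In{W}X$, the set $\alpha^{-1}(S)$ is open in $W$ (since $S\in\tau_X$), and by the sheaf axiom the restriction $\alpha|_{\alpha^{-1}(S)}$ is a plot of $S\prec X$. Hence $\alpha^{-1}(V)=(\alpha|_{\alpha^{-1}(S)})^{-1}(V)$ is open in $\alpha^{-1}(S)$ and thus in $W$, proving $V\in\tau_X$. \textbf{Preliminary (b):} for $A\subseteq S\subseteq|X|$, the iterated sub-diffeology $A\prec S\prec X$ equals $A\prec X$, directly from the plot description of the sub-diffeology. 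Consequently $\M^{S\prec X}_A=\Coo(A\prec S\prec X,\R)=\Coo(A\prec X,\R)=\M^X_A$ whenever $A\in\tau_X$ with $A\subseteq S$.

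With (a) and (b) in hand, I unwind the two diffeologies. A plot of $(S\prec\FGD(X))$ on $U\in\OR$ is a map $d\colon U\to S$ such that, viewed into $X$, it lies in $\D^X\M^X_U$: this means $d\in\Top(U,\TD(X))$ with $d(U)\subseteq S$ and $l\circ d|_{d^{-1}(A)}\in\Coo(d^{-1}(A),\R)$ for every $A\in\tau_X$ and $l\in\M^X_A$. A plot of $\FGD(S\prec X)$ is the analogous condition with $X$ replaced by $S\prec X$ throughout. By (a) the two continuity requirements are literally the same. For the functional condition, the direction $(S\prec\FGD(X))\Rightarrow\FGD(S\prec X)$ is immediate from (a)--(b), since open sets of $S\prec X$ are in particular elements of $\tau_X$ contained in $S$ and the corresponding spaces of smooth functionals agree. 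Conversely, given a plot of $\FGD(S\prec X)$, $A\in\tau_X$ and $l\in\M^X_A$, I set $A':=A\cap S\in\tau_{S\prec X}$; since $d(U)\subseteq S$ we have $d^{-1}(A)=d^{-1}(A')$, and $l\circ d|_{d^{-1}(A)}=l|_{A'}\circ d|_{d^{-1}(A')}$ with $l|_{A'}\in\M^X_{A'}=\M^{S\prec X}_{A'}$ by (b), so smoothness follows from the LHS.

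The main obstacle is really only Preliminary (a): as the paper explicitly warns after the $\TD\dashv\DT$ adjunction, the $D$-topology on a general diffeological subspace can be strictly finer than the subspace topology of $\TD(X)$, and the $D$-openness of $S$ is precisely the hypothesis that forces equality via the sheaf axiom. Once (a) is secured, (b) and the final comparison are routine unwinding of Def.~\ref{def:functionallyGeneratedSpace} and Def.~\ref{def:DF}.
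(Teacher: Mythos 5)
Your proof is correct, and it follows the same skeleton as the paper's (identify the $D$-topology of the open subspace with the subspace topology, then establish smoothness of the identity in both directions), but you fill in the two key ingredients by hand where the paper delegates them. For the topology statement your Preliminary~(a) is exactly the content of the cited result \cite[Lem.~3.17]{CSW}, which the paper simply invokes; your direct argument is fine, except that the step showing $\alpha|_{\alpha^{-1}(S)}$ is a plot of $(S\prec X)$ uses the \emph{presheaf} condition (precomposition with the smooth inclusion $\alpha^{-1}(S)\hookrightarrow W$), not the sheaf condition. For the diffeology comparison the paper argues categorically: one direction via the adjunction $\FGD\dashv\DFG$ (Thm.~\ref{thm:fgtfulInDiffIsRightAdjoint}), the other via \ref{enu:ext} of Rem.~\ref{rem:propDF}; you instead unwind Def.~\ref{def:DF} at the level of plots, using the iterated-subspace identity $(A\prec(S\prec X))=(A\prec X)$ to match the maximal families $\M^{S\prec X}_{A'}=\M^{X}_{A'}$ for $A'$ $D$-open and contained in $S$, and the trick $A':=A\cap S$, $d^{-1}(A)=d^{-1}(A')$ in the harder direction. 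What your route buys is a self-contained, purely definitional verification that makes visible exactly where the hypothesis $S\in\tau_{X}$ enters (only in~(a)); what the paper's route buys is brevity and reuse of already established general machinery.
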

\begin{proof}
By~\cite[Lem.~3.17]{CSW}, $\tau_{(S\prec X)}=\{A\cap S\mid A\in\tau_{X}\}$
since $S$ is open. By \ref{enu:topOfFunctExtension} of Rem. \ref{rem:propDF}
we have $\tau_{\FGD(X)}=\tau_{X}$ and hence $\TD(\FGD(S\prec X))=\TD(S\prec X)=\TD(S\prec\FGD(X))$.
The smoothness of the identity set map $\FGD(S\prec X)\ra(S\prec\FGD(X))$
follows from Thm. \ref{thm:fgtfulInDiffIsRightAdjoint}, and the smoothness
of the inverse set map essentially follows from \ref{enu:ext} of
Rem. \ref{rem:propDF}.
\end{proof}
Since the coequalizer in $\Fgen$ in general is different from the
coequalizer in $\Diff$, the forgetful functor $\xymatrix{\DFG:\Fgen\ar@{^{(}->}[r] & \Diff}
$ has no right adjoint. Here is another interesting example that colimit
in $\Fgen$ is different from the corresponding colimit in $\Diff$:
\begin{example}
\label{exa:pushout}Let $X$ be the pushout of 
\begin{equation}
\xymatrix{\R & \R^{0}\ar[r]^{0}\ar[l]_{0} & \R}
\label{eq:pushoutCounterEx}
\end{equation}
 in $\Diff$. Then we have a commutative diagram 
\[
\xymatrix{\R^{0}\ar[r]^{0}\ar[d]_{0} & \R\ar[d]^{i}\\
\R\ar[r]_{j} & \R^{2}
}
\]
 in $\Diff$ with $i(x)=(x,0)$ and $j(y)=(0,y)$. This induces a
smooth injective map $X\ra\R^{2}$. Write $Y\in\Diff$ for the image
of this map with the sub-diffeology of $\R^{2}$. One can show that 
\begin{enumerate}[leftmargin={*},label=(\roman*),align=left]
\item the induced smooth map $X\ra Y$ is not a diffeomorphism; 
\item the $D$-topology on both $X$ and $Y$ coincide with the sub-topology
of $\R^{2}$; 
\item for any open subset $A$ of $\R^{2}$, $C^{\infty}(A\cap X\prec X,\R)=C^{\infty}(A\cap Y\prec Y,\R)$,
which implies that $X$ is not functionally generated; 
\item $Y$ is Frölicher because $\Fr$ is closed with respect to subobjects,
so $Y\in\Fgen$. 
\end{enumerate}
\end{example}
Hence, by Cor. \ref{cor:FgenCocomplete}, the pushout of \eqref{eq:pushoutCounterEx}
in $\Fgen$ is $\FGD(X)\simeq Y\not{\!\!\simeq\,\,}X$.

Now we show that the embedding $\FGF:\Fr\ra\Fgen$ is not essentially
surjective, that is, there are functionally generated spaces which
are not from Frölicher spaces: 
\begin{example}
\label{eg:nonfgen} Let $Y=(-\infty,0)\cup(0,\infty)$, and let $X$
be the pushout of 
\[
\xymatrix{\R & ~Y\ \ar@{^{(}->}[r]\ar@{_{(}->}[l] & \R}
\]
 in $\Diff$. Then $C^{\infty}(X,\R)\simeq C^{\infty}(\R,\R)$. Since
no element in $C^{\infty}(X,\R)$ can detect the double points at
origin, there is no Frölicher space such that its image under the
embedding $\DF:\Fr\ra\Diff$ is $X$. But since the two colimit maps
$\R\ra X$ are injective and open, $X$ is functionally generated.
In other words, for any $U\in\OR$, $C^{\infty}(X,\R)$ \emph{can
not} detect whether an arbitrary function $U\ra X$ is smooth, but
it \emph{can} detect whether a continuous function $U\ra X$ is smooth.
Moreover, the initial topology on $X$ with respect to $C^{\infty}(X,\R)$
is strictly coarser than the $D$-topology.\end{example}
\begin{thm}
\label{thm:FgenIsCartClosed}The category $\Fgen$ is Cartesian closed.\end{thm}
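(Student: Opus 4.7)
The plan is to take as exponential the reflected $\Diff$-exponential: define $Z^Y := \FGD(\Coo(Y,Z)) \in \Fgen$, where $\Coo(Y,Z)$ carries the usual $\Diff$-functional diffeology. I then verify the natural bijection $\Coo(W \times Y, Z) \cong \Coo(W, Z^Y)$ for $W, Y, Z \in \Fgen$. Since products of functionally generated spaces coincide with products in $\Diff$ (Cor.~\ref{cor:FgenComplete} and Thm.~\ref{thm:initial}), the left-hand side equals $\Coo(W \times Y, Z)$ computed in $\Diff$, and by Cartesian closedness of $\Diff$ this is naturally bijective with $\Coo(W, \Coo(Y,Z))$. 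Using that $\Fgen$ is a full subcategory of $\Diff$ on the right-hand side, the whole thing reduces to the equality
\[
\Coo(W, \Coo(Y,Z)) = \Coo(W, \FGD(\Coo(Y,Z)))
\]
of $\Diff$-hom sets, for every $W \in \Fgen$.

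The forward inclusion is immediate from Rem.~\ref{rem:propDF}\ref{enu:ext}, which gives that the set identity $\Coo(Y,Z) \to \FGD(\Coo(Y,Z))$ is smooth, so post-composition sends a smooth map $W \to \Coo(Y,Z)$ to a smooth map $W \to \FGD(\Coo(Y,Z))$. For the reverse direction, given a smooth $f: W \to \FGD(\Coo(Y,Z))$ in $\Diff$, I must show that for every plot $d: U \to W$ the composite $f \circ d: U \to \Coo(Y,Z)$ is a plot of $\Coo(Y,Z)$, equivalently that $(f \circ d)^{\vee}: U \times Y \to Z$ is smooth in $\Diff$. Since $Z \in \Fgen$, Thm.~\ref{thm:codomFgenTestFunctionals} reduces this to continuity (routine from continuity of $f$, $d$ and of evaluation) together with smoothness of $\lambda \circ (f \circ d)^{\vee}$ along every plot $(e_1, e_2): V \to U \times Y$ landing in the relevant preimage, for each $\lambda \in \F^{Z}_{B}$ from a generating family of $Z$.

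For the latter step, the key input is that for each plot $e_2: V \to Y$ the map $\Coo(Y,Z) \to \Coo(V,Z)$ sending $g \mapsto g \circ e_2$ is smooth (by Cartesian closedness of $\Diff$ applied to the evaluation $\Coo(Y,Z) \times V \to \Coo(Y,Z) \times Y \to Z$); composing with point evaluations at $v \in V$ and with $\lambda$ produces a family of smooth functionals on $D$-open subsets of $\Coo(Y,Z)$. By hypothesis $f$ is smooth into $\FGD(\Coo(Y,Z))$ and therefore composes smoothly with every such functional (by the very definition of the functional extension, which coincides with its own smooth functionals on $D$-opens by Rem.~\ref{rem:propDF}\ref{enu:ext}). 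Combining this with $Y \in \Fgen$ (so that plots of $Y$ are completely controlled by its smooth functionals) and a further application of Thm.~\ref{thm:codomFgenTestFunctionals} to the relevant sub-diffeologies of $U \times Y$ gives the required joint smoothness. The main obstacle is precisely this final patching: the functionals of the form $g \mapsto \lambda(g(e_2(v)))$ at a fixed $v$ only directly encode ``separate'' smoothness in the $U$-direction, and converting this into joint smoothness of $v \mapsto \lambda(f(d(e_1(v)))(e_2(v)))$ as $e_1(v)$ and $e_2(v)$ vary simultaneously is where the fgen hypothesis on $Y$ must be used in full, via Thm.~\ref{thm:codomFgenTestFunctionals}, to bootstrap separate smoothness into joint smoothness.
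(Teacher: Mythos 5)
Your overall reduction is sound and is, in substance, the same as the paper's: taking $Z^{Y}=\FGD(\Coo(Y,Z))$ and requiring $\Coo(W,\Coo(Y,Z))=\Coo(W,\FGD(\Coo(Y,Z)))$ for all $W\in\Fgen$ amounts (by testing on plots $U\ra W$ with $U\in\OR$) to the statement that every plot of $\FGD(\Coo(Y,Z))$ is already a plot of $\Coo(Y,Z)$, i.e.\ that $\Coo(Y,Z)$ is functionally generated; this is exactly what the paper proves (even for arbitrary $Y\in\Diff$), so the reflection $\FGD$ is redundant and the exponential is the one of $\Diff$. The problem is that the two analytic steps you declare ``routine'' or leave as an ``obstacle'' are precisely where all the work lies, and neither is closed in your sketch.

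First, the continuity of $(f\circ d)^{\vee}:U\times Y\ra Z$ is not routine. Smoothness of $f\circ d$ into $\FGD(\Coo(Y,Z))$ only gives continuity $U\ra\TD(\Coo(Y,Z))$, and the composite argument $(f\circ d)\times 1_{Y}$ followed by evaluation breaks down because the $D$-topology of a product, here $\tau_{\Coo(Y,Z)\times Y}$, is in general strictly finer than the product of the $D$-topologies, while joint continuity of evaluation with respect to the product topology needs something like local compactness of the exponent. This is exactly why the paper first reduces (Step 1, via the adjunction $\FGD\dashv\DFG$ and precomposition with plots $p:\R^{n}\ra X$) to Euclidean exponents, where Step 2 can use a relatively compact neighbourhood in $\R^{n}$ together with the fact that the $D$-topology on $\Coo(\R^{n},Y)$ contains the compact-open topology (\cite[Prop.~4.2]{CSW}). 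Second, and more seriously, your final step is not an argument: functionals of the form $g\mapsto\lambda(g(e_{2}(v)))$ at fixed $v$ only yield smoothness separately in the $U$-direction, and separate smoothness does not imply joint smoothness; neither the hypothesis $Y\in\Fgen$ nor Thm.~\ref{thm:codomFgenTestFunctionals} provides such a bootstrap. The paper resolves this point by a different mechanism: locally it factors the problem through $l_{*}\circ\mathrm{Res}:(\tilde{A}\prec\Coo(\R^{n},Y))\ra\Coo(V,\R)$ with $V$ relatively compact open in $\R^{n}$ and $\tilde{A}$ $D$-open (again the compact-open fact), uses that $\Coo(V,\R)$ is functionally generated (a Fr\"olicher function space), applies the adjunction to get smoothness of $l_{*}\circ\mathrm{Res}$ on $\FGD(\tilde{A}\prec\Coo(\R^{n},Y))=(\tilde{A}\prec\FGD(\Coo(\R^{n},Y)))$ (Cor.~\ref{cor:FGDPreservesSub}), and only then recovers \emph{joint} smoothness on $W\times V$ by Cartesian closedness of $\Diff$ applied to $\Coo(V,\R)$. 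Without these ingredients --- the reduction to $\R^{n}$, the compact-open topology fact, the functional generation of $\Coo(V,\R)$, and Cor.~\ref{cor:FGDPreservesSub} --- your proof does not close.
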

\begin{proof}
Since $\Diff$ is Cartesian closed and the product in $\Fgen$ is
the same as the product in $\Diff$, it suffices to show that if $X$
is a diffeological space and $Y$ is a functionally generated space,
then the functional diffeology of the space $\Coo(X,Y)$ is functionally
generated.

We split the proof of the claim into three steps.

\textbf{Step 1}: We prove that if $\Coo(\R^{n},Y)$ is functionally
generated for all $n\in\N$, then $\Coo(X,Y)$ is functionally generated.

To prove that $\Coo(X,Y)$ is functionally generated, by \ref{enu:FgeneratesDequiv}
of Rem. \ref{rem:propDF}, for any $d\In{U}\FGD(\Coo(X,Y))$ we need
to show that $d\In{U}\Coo(X,Y)$, i.e., that $d^{\vee}:U\times X\ra Y$
is smooth. This is equivalent to show that for any plot $p:\R^{n}\ra X$,
the composition 
\[
\xymatrix@C=10ex{U\times\R^{n}\ar[r]^{1_{U}\times p} & U\times X\ar[r]^{d^{\vee}} & Y}
\]
 is a plot of $Y$. This is again equivalent to show that the composition
\[
\xymatrix@C=10ex{U\ar[r]\sp(0.38){d} & \Coo(X,Y)\ar[r]^{p^{*}} & \Coo(\R^{n},Y)}
\]
 is smooth. By assumption $\Coo(\R^{n},Y)$ is functionally generated,
and the map $p^{*}$ is smooth, so the adjunction $\FGD\dashv\DFG$
(Thm. \ref{thm:fgtfulInDiffIsRightAdjoint}) implies that $p^{*}:\FGD(\Coo(X,Y))\ra\Coo(\R^{n},Y)$
is smooth. But $d\In{U}\FGD(\Coo(X,Y))$. So $p^{*}\circ d:U\ra\Coo(\R^{n},Y)$
is smooth, which prove our first claim.

\textbf{Step 2}: We prove below that if $d:U\ra\Coo(\R^{n},Y)$ is
a continuous map, then the induced function $d^{\vee}:U\times\R^{n}\ra Y$
is continuous.

Let $A$ be a $D$-open subset of $Y$, and let $(u,x)\in\left(d^{\vee}\right)^{-1}(A)$.
Since $d(u)\in\Coo(\R^{n},Y)$, $(d(u))^{-1}(A)$ is an open neighborhood
of $x\in\R^{n}$. Take a relatively compact open neighborhood $V$
of $x\in\R^{n}$ such that its closure $\bar{V}\subseteq(d(u))^{-1}(A)$.
Write $\tilde{A}=\{f\in\Coo(\R^{n},Y)\mid f(\bar{V})\subseteq A\}$.
Since the $D$-topology on $\Coo(\R^{n},Y)$ contains the compact-open
topology (\cite[Prop.~4.2]{CSW}), $\tilde{A}$ is $D$-open in $\Coo(\R^{n},Y)$.
Hence, $W:=d^{-1}(\tilde{A})$ is an open neighborhood of $u\in U$.
Therefore, $W\times V$ is an open neighborhood of $(u,x)\in\left(d^{\vee}\right)^{-1}(A)$,
which implies that the map $d^{\vee}$ is continuous.

\textbf{Step 3}: We prove below that $\Coo(\R^{n},Y)$ is functionally
generated.

Let $d\In{U}\FGD(\Coo(\R^{n},Y))$.We need to show that the induced
function $d^{\vee}:U\times\R^{n}\ra Y$ is smooth. From Step 2, we
know that $d^{\vee}$ is continuous. Since $Y$ is functionally generated,
it is enough to show that for any $D$-open subset $A$ of $Y$ and
any $l\in\Coo(A\prec Y,\R)$, the composition 
\[
\xymatrix@C=10ex{\left(d^{\vee}\right)^{-1}(A)\ar[r]\sp(0.57){d^{\vee}|_{\left(d^{\vee}\right)^{-1}(A)}} & A\ar[r]^{l} & \R}
\]
 is smooth. For any $(u,x)\in\left(d^{\vee}\right)^{-1}(A)$, use
the notations $\tilde{A}$, $V$ and $W$ defined in Step 2. Since
smoothness is a local condition, it is enough to show that the composition
\[
\xymatrix@C=10ex{W\times V\ \ar@{^{(}->}[r] & \left(d^{\vee}\right)^{-1}(A)\ar[r]\sp(0.57){d^{\vee}|_{\left(d^{\vee}\right)^{-1}(A)}} & A\ar[r]^{l} & \R}
\]
 is smooth. Equivalently, we need to show that the composition 
\[
\xymatrix{W\ar[r]\sp(0.3){d|_{W}} & (\tilde{A}\prec\Coo(\R^{n},Y))\ar[r]\sp(0.58){\text{Res}} & \Coo(V,A)\ar[r]^{l_{*}} & \Coo(V,\R)}
\]
 is smooth, where $\text{Res}$ is the restriction map. It is easy
to see that the map $\text{Res}:(\tilde{A}\prec\Coo(\R^{n},Y))\ra\Coo(V,A)$
is smooth, so $l_{*}\circ\text{Res}:(\tilde{A}\prec\Coo(\R^{n},Y))\ra\Coo(V,\R)$
is smooth. Since $d\In{U}\FGD(\Coo(\R^{n},Y))$, we get $d|_{W}\In{W}(\tilde{A}\prec\FGD(\Coo(\R^{n},Y)))$.
But both $V$ and $\R$ are Frölicher spaces, so $\Coo(V,\R)$ is
functionally generated, and the adjunction $\FGD\dashv\DFG$ (Thm.
\ref{thm:fgtfulInDiffIsRightAdjoint}) implies that the map $l_{*}\circ\text{Res}:\FGD(\tilde{A}\prec\Coo(\R^{n},Y))\ra\Coo(V,\R)$
is smooth. By Cor. \ref{cor:FGDPreservesSub} we have $\FGD(\tilde{A}\prec\Coo(\R^{n},Y))=(\tilde{A}\prec\FGD(\Coo(\R^{n},Y)))$,
so the conclusion follows.
\end{proof}

\subsection{\label{sub:Preservation-of-limits}Preservation of limits and (suitable)
colimits of manifolds}

In this subsection, we are going to discuss the question that if a
limit (or colimit) exists in $\Man$, the category of smooth manifolds
and smooth maps, then is it the same as the corresponding limit (or
colimit) in $\Fgen$? The statements of the main results and the idea
of proofs mainly come from \cite{ncatlab}.
\begin{thm}
\cite{ncatlab}\label{thm:limitsPreserv}Let $F:\mathcal{I}\ra\Man$
be a functor. Assume that $\lim F$ exists in $\Man$. Write $\FGM:\Man\ra\Fgen$
for the embedding functor. Then $\FGM(\lim F)\simeq\lim(\FGM\circ F)$.\end{thm}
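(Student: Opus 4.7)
\medskip
\noindent\textbf{Proof plan.}
The plan is to verify the universal property of $\FGM(\lim F)$ as a limit of $\FGM\circ F$ in $\Fgen$, pulling back the corresponding universal property from $\Man$. Write $M:=\lim F$ in $\Man$ with canonical cone $(\pi_i:M\ra F_i)_{i\in\mathcal{I}}$. The key input, recalled from Rem.~\ref{rem:propDiffSpaces}, is that the full embedding $\Man\hookrightarrow\Diff$ endows each manifold with its standard diffeology, so that the plots of $\FGM(M)$ and of each $\FGM(F_i)$ with domain $U\in\OR$ coincide with ordinary smooth maps of manifolds $U\ra M$ and $U\ra F_i$ respectively; note that each such $U$ is itself a smooth manifold.

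To test the universal property, let $X\in\Fgen$ together with a cone $(\phi_i:X\ra\FGM(F_i))_{i\in\mathcal{I}}$ in $\Fgen$ be given. Since any limit in the concrete category $\Man$ is also the underlying set-theoretic limit, there exists a unique set map $\phi:|X|\ra|M|$ with $\pi_i\circ\phi=\phi_i$ for every $i$. It remains to show that $\phi$ is smooth in $\Fgen$, i.e.\ that $\phi\in\Coo(X,\FGM(M))$. To that end, I fix an arbitrary plot $d\In{U}X$. For each $i$, the composition $\phi_i\circ d:U\ra F_i$ is a plot of $\FGM(F_i)$, hence a smooth manifold map by the first step. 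The family $(\phi_i\circ d)_i$ is then a cone in $\Man$ over $F$ with vertex the manifold $U$, so the universal property of $\lim F$ in $\Man$ yields a unique smooth manifold map $\psi\in\Coo(U,M)$ with $\pi_i\circ\psi=\phi_i\circ d$. Since $\pi_i\circ(\phi\circ d)=\phi_i\circ d$ as set maps, uniqueness of the induced set map forces $\phi\circ d=\psi$, whence $\phi\circ d$ is a smooth manifold map and therefore a plot of $\FGM(M)$. Thus $\phi$ is smooth, and its uniqueness in $\Fgen$ reduces to its uniqueness in $\Set$.

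The only nontrivial ingredient is the identification of plots of a manifold with ordinary smooth maps, which is built into the full embedding $\Man\hookrightarrow\Diff$; the rest of the argument is a routine translation between universal properties in $\Set$, $\Man$, $\Diff$ and $\Fgen$. I do not foresee a serious obstacle: the theorem ultimately expresses the fact that because every plot domain $U\in\OR$ is already a manifold, the universal property of $\lim F$ in $\Man$ is strong enough to verify the corresponding universal property against every functionally generated vertex $X$.
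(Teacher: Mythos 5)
Your argument is correct, and it reaches the conclusion by a slightly different organization than the paper: you verify directly that $\FGM(\lim F)$, with the cone $\left(\FGM(\pi_i)\right)_{i\in\mathcal{I}}$, has the universal property of $\lim(\FGM\circ F)$ in $\Fgen$, whereas the paper first uses completeness of $\Fgen$ (Cor.~\ref{cor:FgenComplete}) to form $\lim(\FGM\circ F)$, takes the canonical comparison map $\eta:\FGM(\lim F)\ra\lim(\FGM\circ F)$, and then shows that $|\eta|$ is bijective and that $\eta^{-1}$ is smooth. The substance is the same in both routes: since every plot domain $U\in\OR$ is itself a manifold and plots of $\FGM(M)$ are exactly the ordinary smooth maps $U\ra M$, the universal property of $\lim F$ in $\Man$ can be tested against plot domains; your smoothness check of the mediating map $\phi$ (composing with an arbitrary $d\In{U}X$ and factoring through the manifold $U$) is precisely the paper's argument that $\eta^{-1}$ is smooth. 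What your version buys is that it does not presuppose the existence of $\lim(\FGM\circ F)$ in $\Fgen$; what the paper's version buys is that the set-level part of the argument is carried out explicitly via maps from $\R^{0}$.

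One justification you should tighten: you assert that a limit in $\Man$ is computed on underlying sets ``since $\Man$ is concrete''. Concreteness (faithfulness of the underlying-set functor) does not by itself imply preservation of limits by that functor \textendash{} e.g.\ in the concrete category of torsion abelian groups, infinite products exist but are not set-theoretic products. The correct reason, which is also exactly how the paper proves injectivity and surjectivity of $|\eta|$, is that the underlying-set functor of $\Man$ is representable by $\R^{0}$: points of a manifold are smooth maps $\R^{0}\ra M$, so applying the universal property of $\lim F$ in $\Man$ to cones with vertex $\R^{0}$ yields, pointwise, both the existence and the uniqueness of your set map $\phi$ (and the uniqueness you invoke to conclude $\phi\circ d=\psi$). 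With this substitution your proof is complete and needs no further changes.
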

\begin{proof}
By the universal property of limit in $\Fgen$, there is a canonical
smooth map $\eta:\FGM(\lim F)\ra\lim(\FGM\circ F)$.

First we prove that $|\eta|$ is surjective. Note that any $x\in|\lim(\FGM\circ F)|$
corresponds to a smooth map $x:\R^{0}\ra\lim(\FGM\circ F)$. So we
have a cone $x\ra F$. Since $\R^{0}$ is a smooth manifold and $\lim F$
exists in $\Man$, by the universal property of limit in $\Man$ and
$\Fgen$, there exists $y:\R^{0}\ra\lim F$ such that $x=\eta\circ\FGM(y)$,
which implies that $|\eta|$ is surjective.

Next we prove that $|\eta|$ is injective. If $a,a'\in|\FGM(\lim F)|$
such that $|\eta|(a)=|\eta|(a')$, then the two cones $a\ra F$ and
$a'\ra F$ have the same image in the target. By the universal property
of limit in $\Man$, $a=a'$.

Finally, we prove that $\eta^{-1}$ is smooth. Let $d\in_{U}\lim(\FGM\circ F)$.
Since the functor $\FGM$ is fully faithful, we get a cone $U\ra F$.
Note that $\FGM(U)=U$. By the universal property of limit in $\Man$
and $\Fgen$, we get a smooth map $f:U\ra\lim F$ such that $\eta^{-1}\circ d=\FGM(f)$.
Hence, $\eta^{-1}$ is smooth.

Therefore, $\FGM(\lim F)\simeq\lim(\FGM\circ F)$.\end{proof}
\begin{rem}
Note that the category $\OR$ with the usual open coverings is a site.
\cite{Ba-Ho11} showed that $\OR$ is a concrete site, and the category
$\Diff$ is equivalent to the category $\CSh(\OR)$ of concrete sheaves
over $\OR$.

We write $\CPre(\OR)$, $\Pre(\OR)$ and $\Sh(\OR)$ for the category
of concrete presheaves over $\OR$, the category of presheaves over
$\OR$ and the category of sheaves over $\OR$, respectively. There
are embedding functors 
\[
\small{\xymatrix{\Man\ar[r] & \Fr\ar[r] & \Fgen\ar[r] & \Diff\ar[r] & \CPre(\OR)\ar[r] & \Pre(\OR)}
}
\]
 and 
\[
\small{\xymatrix{\Man\ar[r] & \Diff\ar[r] & \Sh(\OR).}
}
\]
 Moreover, if a limit exists in $\Man$, then the corresponding limits
in all the other categories listed above are isomorphic to that limit
in the corresponding categories.
\end{rem}
In general, if a colimit in $\Man$ exists, when viewed as a functionally
generated space it may be different from the corresponding colimit
in $\Fgen$.
\begin{example}
Recall that in Ex.~\ref{eg:nonfgen}, we showed that the underlying
set of the pushout $X$ of 
\[
\xymatrix{\R & ~\R\setminus\{0\}\ \ar@{^{(}->}[r]\ar@{_{(}->}[l] & \R}
\]
 in $\Fgen$ has double points at origin. Moreover, the $D$-topology
$\tau_{X}$ is not Hausdorff. One can also show by continuity that
the pushout of this diagram in $\Man$ exists, and it is $\R$. Therefore,
$X\not{\!\!\simeq\,\,}\R$ in $\Fgen$.\end{example}
\begin{thm}
\cite{ncatlab}\label{thm:surjective} Let $G:\mathcal{J}\ra\Man$
be a functor such that $\colim G$ exists in $\Man$. Then the canonical
smooth map 
\[
\eta:\colim(\FGM\circ G)\ra\FGM(\colim G)
\]
 induces a surjective map 
\[
|\eta|:|\colim(\FGM\circ G)|\ra|\FGM(\colim G)|.
\]
\end{thm}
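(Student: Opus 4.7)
The plan is to identify the source of $|\eta|$ with the set-theoretic colimit $\colim|G|$, read off the map $|\eta|$ explicitly, and then force its image to be all of $|\colim G|$ by invoking the universal property of $M:=\colim G$ in $\Man$. By Cor.~\ref{cor:FgenCocomplete} we have $\colim(\FGM\circ G)\simeq\FGD(\colim(\DFG\circ\FGM\circ G))$, with the inner colimit taken in $\Diff$. Since $\FGD$ acts as the identity on underlying sets, and since diffeological colimits are computed on underlying sets by Rem.~\ref{rem:propDiffSpaces}\ref{enu:limitsColimitsDiff}, we obtain $|\colim(\FGM\circ G)|=\colim|G|$ in $\Set$. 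Writing $\mu_j:G(j)\ra M$ for the legs of the universal cocone in $\Man$ and unravelling the definition of $\eta$, one sees that $|\eta|$ sends the class of $y\in|G(j)|$ to $\mu_j(y)\in|M|$, so surjectivity of $|\eta|$ is equivalent to the assertion $A:=\bigcup_{j\in\mathcal{J}}\mu_j(|G(j)|)=|M|$.

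To prove $A=|M|$, I argue by contradiction. Suppose there exists $x\in|M|\setminus A$. Since $M$ is a Hausdorff smooth manifold, $\{x\}$ is closed and $N:=M\setminus\{x\}$ is an open submanifold of $M$ in $\Man$. By the choice of $x$, every $\mu_j$ factors uniquely as $\iota\circ\nu_j$, where $\iota:N\hookrightarrow M$ is the inclusion and $\nu_j\in\Coo(G(j),N)$; the injectivity of $\iota$ immediately lifts the cocone identities for $\mu$ to the family $\{\nu_j\}$, so $\{\nu_j\}$ is a cocone over $G$ with target $N$ in $\Man$. The universal property of $M$ then produces a unique smooth $f:M\ra N$ with $f\mu_j=\nu_j$ for every $j$. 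Composing with $\iota$ gives $\iota f:M\ra M$ satisfying $(\iota f)\mu_j=\iota\nu_j=\mu_j$, so the uniqueness clause of the universal property applied to the cocone $\mu$ itself forces $\iota f=1_M$. This is absurd since $(\iota f)(x)\in N$ cannot equal $x\notin N$. Hence $A=|M|$ and $|\eta|$ is surjective.

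The only delicate point is aligning two applications of the universal property of $M$ in Step~two — first to the cocone $\{\nu_j\}$ with target $N$, producing $f$, and then to the cocone $\mu$ with target $M$, producing $\iota f=1_M$ by uniqueness — so that the two uniqueness clauses interlock to yield the contradiction. Everything else is formal: the set-theoretic description of $|\colim(\FGM\circ G)|$ follows from the adjunction $\FGD\dashv\DFG$ and the way colimits are formed in $\Diff$, while the factorization $\mu_j=\iota\circ\nu_j$ and the verification that $\{\nu_j\}$ is a cocone use only the point-set hypothesis $x\notin A$ together with the openness of $N$ in $\Man$.
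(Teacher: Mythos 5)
Your proof is correct and follows essentially the same route as the paper's: identify $|\colim(\FGM\circ G)|$ with the set-theoretic colimit so that the image of $|\eta|$ is $\bigcup_j\mu_j(|G(j)|)$, then argue by contradiction that a missed point $x$ would let the cocone factor through the open submanifold $\colim G\setminus\{x\}$, forcing the identity of $\colim G$ to factor through it by the universal property, which is impossible. Your write-up merely spells out the two applications of the universal property (existence of $f$ and uniqueness giving $\iota f=1_{\colim G}$) that the paper leaves implicit.
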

\begin{proof}
The canonical smooth map $\eta:\colim(\FGM\circ G)\ra\FGM(\colim G)$
comes from the universal property of colimits in $\Man$ and $\Fgen$.

Assume that $|\eta|$ is not surjective. Say $y\in|\FGM(\colim G)|$
is not in the image. Then $A:=\colim G\setminus\{y\}$ is a smooth
manifold, and $G(j)\ra\colim G$ factors through $A\hookrightarrow\colim G$
for each $j\in J$ since $|\colim(\FGM\circ G)|=\colim|\FGM\circ G|$.
Hence, by the universal property of colimit in $\Man$, the identity
map $\colim G\ra\colim G$ must factor through $A\hookrightarrow\colim G$,
which is impossible. Therefore, $|\eta|$ is surjective.\end{proof}
\begin{thm}
\cite{ncatlab}\label{thm:preservecolimits} Let $G:\mathcal{J}\ra\Man$
be a functor such that $\colim G$ exists in $\Man$. If $\colim(\FGM\circ G)$
is a Frölicher space and its $D$-topology is Hausdorff, then $\colim(\FGM\circ G)\simeq\FGM(\colim G)$. \end{thm}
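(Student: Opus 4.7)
The plan is to upgrade the canonical surjection $|\eta|:|\colim(\FGM\circ G)|\ra|\FGM(\colim G)|$ of Thm.~\ref{thm:surjective} to a diffeomorphism in $\Fgen$. Write $H:=\colim(\FGM\circ G)$ and $M:=\colim G$, and let $\lambda_j:G(j)\ra H$ and $\rho_j:G(j)\ra M$ be the two canonical cocones, so that $\eta\circ\lambda_j=\rho_j$.

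The central tool is a factorisation lemma: every smooth functional $l\in\Coo(H,\R)$ has the form $l=\bar l\circ\eta$ for a unique smooth $\bar l:M\ra\R$. To see this, observe that $\{l\circ\lambda_j\}_j$ is a compatible family of smooth functions on the manifolds $G(j)$; the universal property of $M=\colim G$ in $\Man$ produces a unique smooth $\bar l:M\ra\R$ with $\bar l\circ\rho_j=l\circ\lambda_j$ for every $j$. Then $\bar l\circ\eta$ and $l$ agree after precomposition with every $\lambda_j$, so the universal property of $H=\colim(\FGM\circ G)$ in $\Fgen$ forces $l=\bar l\circ\eta$.

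With this lemma in hand, the smoothness of a set-theoretic inverse to $|\eta|$ follows almost formally. Since $H$ is a Fr\"olicher space and $\FGF:\Fr\ra\Fgen$ is fully faithful (see \ref{enu:functorsFrDiff} of Rem.~\ref{rem:propFrSpaces}), smoothness of $\eta^{-1}:\FGM(M)\ra H$ in $\Fgen$ is equivalent to checking that $l\circ\eta^{-1}$ is smooth for every $l\in\Coo(H,\R)$; and the lemma yields $l\circ\eta^{-1}=\bar l$, which is smooth on the manifold $M$.

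The crux of the argument is therefore the injectivity of $|\eta|$. By the lemma, $\eta(x)=\eta(x')$ implies $l(x)=l(x')$ for every smooth $l:H\ra\R$, so it suffices to show that smooth real-valued functionals separate the points of $H$. I expect this to be the main obstacle, since point-separation by smooth functionals is not automatic for a general Hausdorff Fr\"olicher space. My strategy is to combine two ingredients: first, the Hausdorff manifold $M$ admits smooth bump functions whose pullbacks along $\eta$ separate any two points of $H$ with distinct $\eta$-images; second, a pair $x\neq x'$ with $\eta(x)=\eta(x')$ would correspond to a ``double point'' configuration of the kind in Ex.~\ref{eg:nonfgen}, where lifts $q_1\in G(j_1)$, $q_2\in G(j_2)$ satisfy $\rho_{j_1}(q_1)=\rho_{j_2}(q_2)$ but $\lambda_{j_1}(q_1)=x\neq x'=\lambda_{j_2}(q_2)$. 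The Hausdorff $D$-topology on $H$ should then be shown to rule out this configuration: any two disjoint $D$-open neighbourhoods of $x$ and $x'$ would pull back, via the cocone maps and a chart of $M$ at $\rho_{j_1}(q_1)$, to disjoint neighbourhoods of $q_1$ and $q_2$ whose images in $M$ both cover a neighbourhood of the common point, contradicting the local structure of the manifold colimit. Once this dichotomy is established, injectivity follows and the proof is complete.
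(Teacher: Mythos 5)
Your factorisation lemma, your reduction of injectivity to point-separation by smooth functionals, and your argument for the smoothness of $\eta^{-1}$ all coincide with the paper's proof. The genuine gap is the injectivity step itself. Having made the correct reduction, you set it aside as ``not automatic'' and replace it by a purely topological sketch that does not close. Its key assertion is that the images under the cocone maps $\rho_{j_1},\rho_{j_2}$ of small neighbourhoods of the lifts $q_1,q_2$ cover a neighbourhood of the common point of $\colim G$; but cocone maps of a colimit in $\Man$ need not be open or locally surjective (they can even be constant, e.g.\ for the coequalizer of $\mathrm{id}_{\R}$ and the zero map, whose colimit in $\Man$ is $\R^{0}$), so this is unjustified. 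Worse, even granting it, no contradiction is actually produced: the disjointness you start from lives in $H:=\colim(\FGM\circ G)$, while the two images in $\colim G$ both contain the common point and so are never disjoint \textendash{} excluding that is precisely the injectivity being proved, so the sketch is circular. Note also that this route never invokes the Fr\"olicher hypothesis, which is exactly what is needed at this point.

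Contrary to your expectation, point-separation \emph{is} automatic here, and this is the paper's key observation: for a diffeological space which is Fr\"olicher, a Hausdorff $D$-topology forces $\Coo(H,\R)$ to separate points. Indeed, suppose $x\neq x'$ in $H$ satisfy $l(x)=l(x')$ for all $l\in\Coo(H,\R)$, and define $c:\R\ra H$ by $c(t)=x$ for $t\le0$ and $c(t)=x'$ for $t>0$. Every composite $l\circ c$ is constant, hence smooth, so the Fr\"olicher property (plots are exactly the maps detected by the smooth functionals) makes $c$ a plot; plots are $D$-continuous, and a continuous map from the connected space $\R$ to a Hausdorff space with finite image is constant (the preimages of the two points are closed, disjoint, nonempty and cover $\R$, hence also open), a contradiction. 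Thus smooth functionals separate the points of $H$, your factorisation lemma then gives injectivity, and the remainder of your argument goes through as written.
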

\begin{proof}
Let $X$ be a diffeological space. It is direct to show that
\begin{enumerate}
\item If $\Coo(X,\R)$ separates points, that is, 
\[
\forall x\neq x'\in X,\exists l\in\Coo(X,\R):l(x)\neq l(x'),
\]
 then the $D$-topology $\tau_{X}$ is Hausdorff.

\item If the $D$-topology $\tau_{X}$ is Hausdorff, then any plot $\R\ra X$
with finite image must be constant.

\item If any plot $\R\ra X$ with finite image must be constant and $X$
is Frölicher, then $\Coo(X,\R)$ separates points.
\end{enumerate} 

By Thm.~\ref{thm:surjective}, the canonical smooth map 
\[
\eta:\colim(\FGM\circ G)\ra\FGM(\colim G)
\]
 induces a surjective map 
\[
|\eta|:|\colim(\FGM\circ G)|\ra|\FGM(\colim G)|.
\]

For any $l\in\Coo(\colim(\FGM\circ G),\R)$, by the universal property
of colimits in $\Man$ and $\Fgen$, there exists a unique smooth
map $f:\colim G\ra\R$ such that $\FGM(f)\circ\eta=l$.

Since $\colim(\FGM\circ G)$ is Frölicher and its $D$-topology is
Hausdorff, from the above we know that $\Coo(\colim(\FGM\circ G),\R)$
separates points. Then the equality $\FGM(f)\circ\eta=l$ implies
that $|\eta|$ is injective.

For any $d\in_{U}\FGM(\colim G)$, $l\circ\eta^{-1}\circ d=\FGM(f)\circ d$
is smooth for any $l\in\Coo(\colim(\FGM\circ G),\R)$. Since $\colim(\FGM\circ G)$
is Frölicher, $\eta^{-1}\circ d\in_{U}\colim(\FGM\circ G)$. Hence,
$\eta^{-1}$ is smooth.

Therefore, $\colim(\FGM\circ G)\simeq\FGM(\colim G)$.
\end{proof}
Here is an immediate application:
\begin{example}
Recall from Ex.~\ref{exa:pushout} that the pushout of 
\[
\xymatrix{\R & \R^{0}\ar[l]_{0}\ar[r]^{0} & \R}
\]
 in $\Fgen$ is the union of the two axes in $\R^{2}$ with the sub-diffeology,
which is a Frölicher space with Hausdorff $D$-topology, but clearly
not a smooth manifold. By Thm.~\ref{thm:preservecolimits}, the pushout
of the above diagram does not exist in $\Man$.
\end{example}

\subsection{\label{sub:Categorical-frameworks}Categorical frameworks for generalized
functions}

We start by asking the following question: What is a ``good'' category
to frame spaces like $\D(\Omega)$, $\D'(\Omega)$, $\mathcal{A}_{q}(\Omega)$,
$U(\Omega)$, $\gs(\Omega)$ and $\gf(\Omega)$?

The following list of remarks permits to restrict the range of choices:
\begin{rem}
\label{rem:catFrameworksGF}\ 
\begin{enumerate}[leftmargin={*},label=(\roman*),align=left]
\item Schwartz distribution theory is classically framed using locally
convex topological vector spaces (LCTVS), so it is natural to search
for a category which contains the category $\LCS$ of LCTVS and continuous
linear maps as a subcategory. 
\item \label{enu:A_0Affine}The space $\mathcal{A}_{0}(\Omega)$ is an affine
space and is usually identified with its underlying vector space (see
e.g.\ \cite{GKOS}). However, it seems that the necessity of this
identification is only due to the choice of a category like $\LCS$,
which is not closed with respect to arbitrary subspaces. It would
be better to choose a complete category. 
\item $U(\Omega)$ and $\mathcal{A}_{0}(\Omega)$ can be viewed as manifolds
modelled in convenient vector spaces (CVS, \cite{FK,KM}). However,
the category of this type of manifolds is not Cartesian closed (\cite{KM}),
whereas Cartesian closedness is a basic choice preferred by several
mathematicians working with infinite dimensional spaces (see e.g.\ \cite{Gio10c}
and references therein). 
\item The candidate category shall contain the category $\ConInf$ of convenient
vector spaces and generic smooth maps (\cite{FK,KM}) as a (full)
subcategory because the differential calculus of these spaces is used
in the study of Colombeau algebras (\cite{GKOS}). Note that we have
embeddings $\Con\subseteq\LCS\subseteq\ConInf$, where $\Con$ is
the category of CVS and continuous linear maps. 
\item The candidate category must be closed with respect to arbitrary quotient
spaces, so as to contain the quotient algebras $\gs(\Omega)$ and
$\gf(\Omega)$. Of course, a better choice would be to consider a
cocomplete category. Since, generally speaking, CVS are not closed
with respect to quotient spaces (see \cite[page 22]{KM}), the candidate
category cannot be $\ConInf$. 
\item The candidate category must also contain nonlinear maps like the product
of GF, e.g. 
\[
(u,v)\in\gs(\Omega)\times\gs(\Omega)\mapsto u\cdot v\in\gs(\Omega)
\]
 or, more generally, any nonlinear smooth operation $f\in\Coo(\R^{n},\R)$
which can be extended to an operation of our algebras of GF, e.g.
\[
(u_{1},\ldots,u_{n})\in(\gs(\Omega))^{n}\mapsto\left[f(u_{1\eps}(-),\ldots,u_{n\eps}(-))\right]\in\gs(\Omega).
\]
 Another feature of the candidate category we are looking for is to
contain as arrows the maps between infinite dimensional spaces like
convolutions, derivatives and integrals of GF.
\end{enumerate}
\end{rem}
In the literature, there are only two categories satisfying all these
requirements: the category $\Diff$ of diffeological spaces, and the
category $\Fr$ of Frölicher spaces. In the present work, we will
also introduce the category $\Fgen$ of functionally generated spaces
as another framework for GF, trying to take the best ideas and properties
of both $\Diff$ and $\Fr$. We will see that all the spaces $\D_{K}(\Omega)$,
$\D(\Omega)$, $\D'(\Omega)$, $\mathcal{A}_{q}(\Omega)$, $U(\Omega)$,
$\gs(\Omega)$ and $\gf(\Omega)$ are objects of these categories,
and in this paper, we in particular study them as functionally generated
spaces.

\section{Topologies for spaces of generalized functions}

\cite[page 2]{KM} declared that ``locally convex topology is not
appropriate for non-linear questions in infinite dimensions''. Indirectly,
this is also confirmed by the fact that topology plays a less important
role in categories like $\Diff$ or $\Fr$. The main aim of this section
is to highlight some relationship between Cartesian closedness and
locally convex topology.

\subsection{Locally convex vector spaces and Cartesian closed categories}

The problems that arise in relating locally convex topology and Cartesian
closedness can be expressed as follows:
\begin{thm}
\label{thm:CartClosAndLCTop}Let $F\in\LCS$, and let $(\mathcal{T},U)$
be a Cartesian closed concrete category over $\Top$, with exponential
objects given by the hom-functor $\mathcal{T}(-,-)$ and the forgetful
functor $U:\mathcal{T}\ra\Top$ acting as identity on arrows. Assume
that $R$, $\bar{F}\in\mathcal{T}$, $U(R)=\R$ and $U(\bar{F})=F$.
Set $F':=\LCS(F,\R)$ for the continuous dual of $F$ and assume that
\begin{equation}
F'\subseteq\left|U(\mathcal{T}(\bar{F},R))\right|,\label{eq:FPrimeInTau}
\end{equation}
 
\[
\left|U(\bar{F}\times\mathcal{T}(\bar{F},R))\right|=\left|U(\bar{F})\times U(\mathcal{T}(\bar{F},R))\right|
\]
 and the topology of the space $U(\bar{F}\times\mathcal{T}(\bar{F},R))$
is coarser than the product topology of $U(\bar{F})\times U(\mathcal{T}(\bar{F},R))$.
Finally assume that for all $g\in F'$ the map $(\lambda\in\R\mapsto\lambda\cdot g\in F')$
is continuous with respect to the topology induced on $F'$ by \eqref{eq:FPrimeInTau}.
Then the locally convex topology on the space $F$ is normable.\end{thm}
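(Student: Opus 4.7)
\emph{Approach.} The plan is to produce a bounded neighborhood of $0$ in $F$ and then invoke Kolmogorov's normability criterion. The essential intermediate step is to establish joint continuity (with respect to the product topology) of the evaluation pairing $F \times F' \to \R$, $(x,g) \mapsto g(x)$, after which boundedness of a suitable neighborhood of $0$ in $F$ will follow via polars and Mackey's theorem.

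\emph{From Cartesian closedness to joint continuity of evaluation.} By Cartesian closedness of $\mathcal{T}$, the evaluation morphism $\mathrm{ev}\colon \bar F \times \mathcal{T}(\bar F, R) \to R$ is an arrow of $\mathcal{T}$. Since $U$ acts as identity on arrows and takes values in $\Top$, the underlying map $U(\mathrm{ev})$ is continuous on the space $U(\bar F \times \mathcal{T}(\bar F, R))$. The hypothesis that this topology is coarser than the product topology on $F \times U(\mathcal{T}(\bar F, R))$ (which has the same underlying set) means that the identity from the finer product topology to the coarser one is continuous; composing with $U(\mathrm{ev})$ shows that evaluation is continuous from $F \times U(\mathcal{T}(\bar F, R))$, with the genuine product topology, into $\R$. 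Restricting the second factor along $F' \subseteq U(\mathcal{T}(\bar F, R))$ from \eqref{eq:FPrimeInTau}, endowed with the induced subspace topology, yields joint continuity of $(x,g) \mapsto g(x)$ on $F \times F'$.

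\emph{From joint continuity to normability.} Continuity at $(0,0)$ provides a neighborhood $V$ of $0$ in $F$ and a neighborhood $W$ of $0$ in $F'$ such that $|g(x)| \le 1$ whenever $x \in V$ and $g \in W$. The hypothesis that $\lambda \mapsto \lambda \cdot g$ is continuous $\R \to F'$ for every $g \in F'$ says exactly that $W$ is absorbing in $F'$: for each $g$, some $\lambda_g g$ lies in $W$. Consequently the polar
\[
V^{\circ} = \{\, g \in F' : \sup_{x \in V} |g(x)| \le 1 \,\}
\]
contains $W$ and is therefore absorbing in $F'$, which is equivalent to $V$ being $\sigma(F,F')$-bounded. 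By Mackey's theorem, weakly bounded sets in a LCVS are bounded, so $V$ is a bounded neighborhood of $0$ in $F$. Shrinking $V$ to a convex balanced subneighborhood (available because $F$ is locally convex) and applying Kolmogorov's normability criterion yields that the topology of $F$ is normable.

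\emph{Main obstacle.} The delicate step is the transfer from the abstract Cartesian closedness of $\mathcal{T}$ to genuine joint topological continuity of the evaluation pairing on $F \times F'$. It requires the three ingredients to cooperate precisely: the exponential structure (so that $\mathrm{ev}$ is a morphism of $\mathcal{T}$), the concrete lifting $U$ to $\Top$, and the coarser-topology hypothesis that replaces the possibly weaker categorical product topology by the product topology inherited from $\Top$. Once joint continuity is secured, the remainder (polar duality, Mackey's theorem and Kolmogorov) is routine locally convex analysis.
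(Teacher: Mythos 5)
Your proposal is correct and follows essentially the same route as the paper: Cartesian closedness makes the evaluation a morphism, the coarser-topology hypothesis upgrades this to joint continuity on $F\times F'$ with the product topology, and the scalar-continuity hypothesis then shows every $g\in F'$ is bounded on the neighbourhood of zero, yielding a bounded neighbourhood and normability. Your explicit detour through polars, Mackey's theorem and Kolmogorov's criterion merely spells out the standard locally convex facts the paper delegates to its reference [KN].
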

\begin{proof}
The idea for the proof is only a reformulation of the corresponding
result in \cite[page~2]{KM}\foreignlanguage{english}{. Since $\mathbf{\mathcal{T}}$
is Cartesian closed, every evaluation 
\[
\text{ev}_{XY}(x,f):=f(x)\quad\forall x\in X\:\forall f\in\mathbf{\mathcal{T}}(X,Y)
\]
 is an arrow of $\mathbf{\mathcal{T}}$ (this is a general result
in every Cartesian closed category, see e.g. \cite{AHS}). Thus, $U(\text{ev}_{XY})=\text{ev}_{XY}$
is a continuous function. In particular, $\text{ev}_{\bar{F}R}:U(\bar{F}\times\mathcal{T}(\bar{F},R))\ra U(R)=\R$
is continuous. By assumption, also $\text{ev}_{\bar{F}R}:F\times U(\mathcal{T}(\bar{F},R))\ra\R$
is continuous. Therefore, also its restriction to the subspace $F'=\LCS(F,\R)\subseteq\left|U(\mathbf{\mathcal{T}}(\bar{F},R))\right|$
is (jointly) continuous: 
\[
\varepsilon:=\text{ev}_{\bar{F}R}|_{F\times F'}:F\times F'\ra\R.
\]
 Hence, we can find neighborhoods $U\subseteq F$ and $V\subseteq F'$
of zero such that $\varepsilon(U\times V)\subseteq[-1,1]$, that is
\[
U\subseteq\left\{ u\in F\mid\forall f\in V:\,\,|f(u)|\le1\right\} .
\]
 But then, because the map $(\lambda\in\R\mapsto\lambda\cdot g\in F')$
is continuous, taking a generic functional $g\in F'$, we can always
find $\lambda\in\R_{\ne0}$ such that $\lambda g\in V$, and hence
$|g(u)|\le1/\lambda$ for every $u\in U$. Any continuous functional
is thus bounded on $U$, so the neighborhood $U$ itself is bounded
(see e.g. \cite{KN}). Since the topology of any locally convex vector
space with a bounded neighborhood of zero is normable (see e.g. \cite{KN}),
we get the conclusion.}
\end{proof}

If, in this theorem, we take $F=\Coo(\R,\R)$ or $F=\D(\Omega)$ or
any other non-normable LCTVS, there are two possibilities to make
the space $F$ an object in a Cartesian closed category: 

\begin{enumerate}[leftmargin={*},label=(\alph*),align=left]
\item $F$ belongs to a Cartesian closed category $\mathcal{T}$, but $\mathcal{T}$
is not a concrete category over $\Top$. This is the solution used
in CVS theory which are embedded in the Cartesian closed category
$\ConInf$. Note that $\LCS$ is not a full subcategory of $\ConInf$
since not every arrow of $\ConInf$ is continuous. A typical example
of a $\ConInf$-smooth but not continuous map (with respect to the
given locally convex topology instead of the $D$-topology) is the
evaluation 
\[
\text{ev}:(x,g)\in F\times F'\mapsto g(x)\in\R.
\]

\item For the solution adapted by using diffeological spaces, we can take
$\mathcal{T}=\Diff$. But then several assumptions of Thm.~\ref{thm:CartClosAndLCTop}
fail: e.g.\ in general $\tau_{X\times Y}\supseteq\tau_{X}\times\tau_{Y}$,
but not the opposite as required; moreover, the $D$-topology on $\D(\Omega)$
is not normable since it is finer than the usual locally convex topology,
which is also not normable. On the contrary, there is no problem regarding
the continuity of the product by scalar, as stated in the following:\end{enumerate}
\begin{thm}
\label{thm:DtopAndTVS}Let $F$ be any one of the spaces $\Coo(\Omega,\R)$
or $\D(\Omega)$, and let $\tau_{F}$ be the $D$-topology on $F$.
Let 
\[
F'_{\text{\emph{s}}}:=\left(\left\{ l\in\Coo(F,\R)\mid l\text{ is linear}\right\} \prec\Coo(F,\R)\right)
\]
 be the smooth dual of $F$, and let $\tau_{F'_{\text{\emph{s}}}}$
be the $D$-topology on $F'_{\text{\emph{s}}}$. Then, with respect
to pointwise operations, both spaces $(F,\tau_{F})$ and $(F'_{\text{\emph{s}}},\tau_{F'_{\text{\emph{s}}}})$
are topological vector spaces.\end{thm}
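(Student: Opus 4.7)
The plan is to verify the two standard topological vector space axioms — joint continuity of addition $+$ and of scalar multiplication $\cdot$ — by first establishing that these operations are smooth morphisms in $\Diff$, and then upgrading smoothness to joint continuity with respect to the product of $D$-topologies.

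\textbf{Step 1: smoothness of the operations.} With the functional diffeology on $F=\Coo(\Omega,\R)$, a map $d:U\to F$ is a plot iff $d^\vee:U\times\Omega\to\R$ is smooth. For the addition map $s:F\times F\to F$, $s(f,g)=f+g$, the exponential adjoint $s^\vee((f,g),x)=\mathrm{ev}(f,x)+\mathrm{ev}(g,x)$ is smooth by Cartesian closedness of $\Diff$ (smoothness of $\mathrm{ev}$) together with smoothness of addition on $\R$; hence $s$ is smooth. Scalar multiplication is handled identically. For $F=\D(\Omega)$ carrying the sub-diffeology from $\Coo(\Omega,\R)$, closure of $F$ under the pointwise operations allows the same maps to restrict to smooth maps on $F\times F$ and $\R\times F$. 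For $F'_{\mathrm{s}}\subseteq\Coo(F,\R)$ with the sub-diffeology, pointwise $+$ and scalar $\cdot$ are smooth as restrictions of the corresponding smooth operations on $\Coo(F,\R)$, and they preserve linearity pointwise so they land in $F'_{\mathrm{s}}$.

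\textbf{Step 2: upgrading to joint continuity.} Smoothness gives continuity $s:(F\times F,\tau_{F\times F})\to(F,\tau_F)$, and analogously for scalar multiplication. The remaining gap is that a priori $\tau_{F\times F}\supseteq\tau_F\times\tau_F$ strictly — this is precisely the phenomenon flagged in the remarks surrounding Thm.~\ref{thm:CartClosAndLCTop}. To close it, I would identify $\tau_F$ with the classical locally convex topology on $F$: for $\Coo(\Omega,\R)$ this is the Fréchet topology of uniform convergence of all derivatives on compact sets, and for $\D(\Omega)$ it is the strict LF topology. The identification would proceed by checking that smooth plots into $F$ under the functional diffeology exactly reproduce the classical smooth curves, invoking Boman's theorem together with the convenient-calculus identifications from~\cite{KM,FK}. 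Once $\tau_F$ is shown to coincide with this classical topology, the standard fact that Fréchet and LF spaces are topological vector spaces yields joint continuity of $+$ and $\cdot$ in the product of $D$-topologies. For $F'_{\mathrm{s}}$ the argument is analogous after identifying $\tau_{F'_{\mathrm{s}}}$ with an appropriate dual topology.

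\textbf{Main obstacle.} The nontrivial content is Step 2: bridging functional-diffeology smoothness and joint continuity in the product $D$-topology, since Thm.~\ref{thm:CartClosAndLCTop} explicitly warns that $\Diff$ and $\LCS$ do not reconcile naively on this point. The strategy above sidesteps the generic failure by identifying $\tau_F$ with a classical locally convex topology for which the TVS axioms are already known. The most delicate case is $\D(\Omega)$: verifying that the inductive limit structure is correctly captured by the sub-diffeology from $\Coo(\Omega,\R)$ — equivalently, that plots into $\D(\Omega)$ locally factor through some $\D_K(\Omega)$ with $K\Subset\Omega$ — requires the usual care with supports and compactness, and will be where the genuine work lies.
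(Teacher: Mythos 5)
Your Step~1 is, in substance, the paper's entire proof: there the sum and the product by scalars are written as composites and pairings of smooth maps (evaluations, post-compositions, and the smooth maps $s_{\R}$, $p_{\R}$) using Cartesian closedness, smoothness is concluded, and continuity is then read off \emph{for the $D$-topology}; the smooth dual $F'_{\text{s}}$ is treated the same way via the operator $(-\prec-)$. One secondary inaccuracy in your Step~1: in this paper $\D(\Omega)$ does not carry the sub-diffeology of $\Coo(\Omega,\R)$ (that space is denoted $\Dsmooth(\Omega)$ and is a priori larger), but the canonical diffeology of Def.~\ref{def:canonicalDiffeolTVS}, whose plots are characterized in Thm.~\ref{thm:2.3KR} as the maps $d$ with $d^{\vee}$ smooth \emph{and} locally of uniformly bounded support; your restriction argument still goes through, but you must note that the support condition is preserved by $+$ and by scalar multiplication.

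The genuine gap is Step~2. To obtain joint continuity for $\tau_{F}\times\tau_{F}$ you propose to identify $\tau_{F}$ with the classical locally convex topology and then quote the Fr\'echet/LF theory. For $F=\D(\Omega)$ this identification is exactly what is \emph{not} available: Cor.~\ref{cor:D-TopFinerLCTop} gives only that the $D$-topology is finer than the LF topology, and the reverse comparison is explicitly listed among the open problems of the paper, so it cannot be ``checked by invoking Boman and convenient calculus''. Worse, it is very likely false: by Thm.~\ref{thm:2.3KR}, Thm.~\ref{thm:AnalogOf2.3KRForD_K} and Lem.~\ref{lem:D_KIsD_KPrecD}, every plot of $\D(\Omega)$ locally factors through some $\D_{K}(\Omega)$ as a plot of $\D_{K}(\Omega)$, so $\tau_{\D(\Omega)}$ is the final topology in $\Top$ determined by the Fr\'echet spaces $\D_{K}(\Omega)$, i.e.\ the $c^{\infty}$-topology of convenient calculus (cf.\ \cite{KM}), which is classically expected to be strictly finer than the LF topology; so ``LF spaces are topological vector spaces'' cannot be imported along your proposed identification, and the case of $F'_{\text{s}}$, disposed of via an unspecified ``appropriate dual topology'', is even further out of reach. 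Note also that the paper itself does not attempt your Step~2: its proof stops where your Step~1 stops, yielding continuity of the operations with respect to the $D$-topology of the product diffeological space $F\times F$ (a priori finer than $\tau_{F}\times\tau_{F}$), and makes no comparison with the locally convex topologies. So the subtlety you flag is real, but your proposed fix rests on an identification that is an open problem in this paper (and presumably false for $\D(\Omega)$); you should either read the statement as the paper's proof does, or supply a direct argument about the product of $D$-topologies, which neither you nor the paper provides.
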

\begin{proof}
We proceed for the case $F=\Coo(\Omega,\R)$ since the other one is
very similar. For simplicity set $Y^{X}:=\Coo(X,Y)$, and 
\[
\langle-,-\rangle:(u,v)\in F\times F\mapsto\left(r\in\Omega\mapsto\left(u(r),v(r)\right)\in\R^{2}\right)\in\left(\R^{2}\right)^{\Omega}
\]
 
\[
\gamma_{1}:(u,v)\in\left(\R^{2}\right)^{\Omega}\times\R^{\R^{2}}\mapsto v\circ u\in F
\]
 
\[
\gamma_{2}:(u,v)\in\R^{\R}\times F\mapsto u\circ v\in F
\]
 
\[
s_{\R}:(r,s)\in\R^{2}\mapsto r+s\in\R
\]
 
\[
p_{\R}:(r,s)\in\R^{2}\mapsto r\cdot s\in\R
\]
 
\begin{multline*}
(-,-):(u,v)\in F^{\R\times F}\times F^{\R\times F}\mapsto\\
\left((\lambda,f)\in\R\times F\mapsto\left(u(\lambda,f),v(\lambda,f)\right)\in F\times F\right)\in\left(F\times F\right)^{\R\times F}.
\end{multline*}
 It is easy to prove that the pointwise sum and pointwise product
by scalars are given by $(-)+(-)=\gamma_{1}(-,s_{\R})\circ\langle-,-\rangle$
and $(-)\cdot(-)=\gamma_{2}\circ\left(p_{\R}^{\wedge}\circ q_{1},q_{2}\right)$,
where $q_{1}:\R\times F\ra\R$ and $q_{2}:\R\times F\ra F$ are the
projections. Therefore, both sum and product in $F$ are composition
or pairing of smooth functions, and hence they are smooth and continuous
in the $D$-topology. Analogously, we can proceed with the smooth
dual $F'_{\text{s}}$ by considering the properties of the operator
$(-\prec-)$.
\end{proof}

\section{Spaces of compactly supported functions as functionally generated
spaces}

It is very easy to see that the spaces $\D(\Omega)=\{f\in\Coo(\Omega,\R)\mid\supp(f)\Subset\Omega\}$
and $\D_{K}(\Omega)=\{f\in\D(\Omega)\mid\supp(f)\Subset K\}$ with
$K\Subset\Omega$ are functionally generated spaces.
Recall that $\D_{K}(\Omega)$ is a LCTVS whose topology is induced
by the family of norms (Fréchet structure) 
\begin{equation}
\|\phi\|_{K,m}:=\max_{|\alpha|\le m}\max_{x\in K}\|\partial^{\alpha}\phi(x)\|\qquad\forall\phi\in\D_{K}(\Omega)\ \forall m\in\N.\label{eq:FrechetOnD_K}
\end{equation}
 Also the space $\D(\Omega)$ is a LCTVS obtained as the inductive
limit (colimit) of $\D_{K}(\Omega)$ for $K\Subset\Omega$, i.e.:
\begin{equation}
A\text{ is open in }\D(\Omega)\iff\forall K\Subset\Omega:\ i_{K}^{-1}(A)=A\cap\D_{K}(\Omega)\text{ is open in }\D_{K}(\Omega),\label{eq:LCTopOnD}
\end{equation}
 where $i_{K}:\D_{K}(\Omega)\hookrightarrow\D(\Omega)$. Recall that
on the space $\D'(\Omega)$ of distributions (i.e., linear maps $l:\left|\D(\Omega)\right|\ra\R$
which are continuous with respect to the locally convex topology)
there is a topology called \emph{weak$^{*}$ topology}, i.e., the
coarsest topology such that each evaluation $\text{ev}_{\phi}:u\in\D'(\Omega)\mapsto\langle u,\phi\rangle\in\R$
is continuous. With respect to this topology, $\D'(\Omega)$ is a
LCTVS.

The so-called \emph{canonical diffeology} on these spaces is a particular
case of the following:
\begin{defn}
\label{def:canonicalDiffeolTVS}Let $V$ be a topological vector space.
The \emph{canonical diffeology} $\mathbb{D}(V)=\bigcup_{U\in\OR}\mathbb{D}_{U}(V)$,
is given by the sets $\mathbb{D}_{U}(V)$ of all maps $d:U\ra V$
which are smooth when tested by continuous linear functionals, i.e.,
\[
\forall l:V\ra\R\text{ continuous linear: }l\circ d\in\Coo(U,\R).
\]
 Therefore (see Def. \ref{def:DF} and Rem. \ref{rem:propDF}), $(V,\mathbb{D}(V))\in\Fgen$
and, since the functionals are globally defined, this is also a Frölicher
space, i.e., 
\[
\DF(\FD(V,\mathbb{D}(V)))=(V,\mathbb{D}(V)).
\]

\end{defn}
We will continue to denote our spaces by $\D(\Omega)$ and $\D_{K}(\Omega)$
even when we think of them as diffeological spaces with the canonical
diffeology. When we want to underscore that we are considering them
only as LCTVS with the topology given by \eqref{eq:LCTopOnD} and
\eqref{eq:FrechetOnD_K}, we will use the notations $\Dlc(\Omega)$
and $\Dlc_{K}(\Omega)$.

\subsection{Plots of $\D_{K}(\Omega)$, $\D(\Omega)$ and Cartesian closedness}

It is also interesting to reformulate the property of being a plot
$d\In{U}\D(\Omega)$, or $d\In{U}\D_{K}(\Omega)$, using Cartesian
closedness. This permits to compare better the canonical diffeology
on these spaces as LCTVS with the diffeology induced on them as subspaces
of $\Coo(\Omega,\R)$. We will denote with $\Dsmooth(\Omega)$ this
diffeological space, so that $d\In{U}\Dsmooth(\Omega)$ if and only
if $d:U\ra|\D(\Omega)|$ and $i\circ d\In{U}\Coo(\Omega,\R)$, where
$i:\D(\Omega)\hookrightarrow\Coo(\Omega,\R)$ is the inclusion. Recall
that $i\circ d\In{U}\Coo(\Omega,\R)$ holds if and only if $\left(i\circ d\right)^{\vee}\in\Coo(U\times\Omega,\R)$.
Analogously, we define $\Dsmooth_{K}(\Omega)$. In performing this
comparison, we will use Lem.~2.1, Lem.~2.2 and Thm.~2.3 of \cite{KR06}
which are cited here for reader's convenience. In this subsection,
without confusion we use the same notation for morphisms in different
categories when the functions for the underlying sets are the same.
\begin{lem}[2.1 of \cite{KR06}]
\label{lem:2.1KR}If $U\in\OR$ and $f\in\Coo(U,\D(\Omega))$, then
$f:U\ra\Dlc(\Omega)$ is continuous.
\end{lem}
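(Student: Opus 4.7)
The plan is to decouple the statement into a support--bounding step and a Fr\'echet--continuity step: locally around any $u_0\in U$, I will first show that $f$ takes values in a single Fr\'echet step $\Dlc_K(\Omega)$, and then verify continuity with respect to the seminorms~\eqref{eq:FrechetOnD_K} of that step. Since the inclusion $\Dlc_K(\Omega)\hookrightarrow\Dlc(\Omega)$ is a topological embedding onto a closed subspace (a standard property of the strict inductive limit~\eqref{eq:LCTopOnD}), continuity of $f$ into $\Dlc_K(\Omega)$ automatically upgrades to continuity into $\Dlc(\Omega)$.

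For the support--bounding step, fix a compact neighbourhood $C$ of $u_0$ in $U$. For every continuous linear $l:\Dlc(\Omega)\ra\R$ the hypothesis gives $l\circ f\in\Coo(U,\R)$, hence $(l\circ f)(C)$ is bounded in $\R$. Thus $f(C)$ is weakly bounded in $\Dlc(\Omega)$, which is a strict LF--space and therefore barrelled, so by Banach--Steinhaus $f(C)$ is (strongly) bounded; by the Dieudonn\'e--Schwartz theorem on strict inductive limits, every bounded subset of $\Dlc(\Omega)$ is contained in some $\Dlc_K(\Omega)$ with $K\Subset\Omega$. This gives $f(C)\subseteq\Dlc_K(\Omega)$, as required.

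For the Fr\'echet--continuity step, observe that the inclusion $\Dlc(\Omega)\hookrightarrow\Coo(\Omega,\R)$ is continuous, so every continuous linear functional on the Fr\'echet space $\Coo(\Omega,\R)$ (i.e.\ every compactly supported distribution) restricts to a continuous linear functional on $\Dlc(\Omega)$. Therefore $f$, viewed as a map into $\Coo(\Omega,\R)$, is smooth in the canonical diffeology of a Fr\'echet space, and by the classical identification of canonical with classical smoothness for Fr\'echet targets (\cite{KM}) one obtains the joint smoothness $f^{\vee}\in\Coo(U\times\Omega,\R)$. Letting $V$ be the interior of $C$, continuity of $f|_V:V\ra\Dlc_K(\Omega)$ at $u_0$ amounts to
\[
\max_{|\alpha|\le m}\max_{x\in K}\left|\partial^{\alpha}_x f^{\vee}(u,x)-\partial^{\alpha}_x f^{\vee}(u_0,x)\right|\ra 0\quad\text{as }u\ra u_0
\]
for every $m\in\N$, and each $\partial^{\alpha}_x f^{\vee}$, being jointly continuous on the compact set $\overline{V}\times K$, is uniformly continuous there. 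Only finitely many multi--indices with $|\alpha|\le m$ are involved, so the required uniform convergence follows.

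The main obstacle is the support--bounding step: the hypothesis tests smoothness of $f$ only along continuous linear functionals, yet we must derive a \emph{uniform} bound on the supports of $f(u)$ over a whole neighbourhood of $u_0$. This is where the analytic content of $\Dlc(\Omega)$ being a strict LF--space (barrelledness together with Dieudonn\'e--Schwartz) enters; without such a bound the Fr\'echet--continuity estimate in the final step could not even be formulated.
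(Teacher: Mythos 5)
Your proposal is correct in substance, but note that the paper does not actually prove this statement: Lem.~\ref{lem:2.1KR} is quoted verbatim from \cite{KR06} ``for reader's convenience'', so there is no internal proof to compare with, and what you have written is a self-contained replacement. Your route is the natural functional-analytic one: (1) from the hypothesis that $l\circ f\in\Coo(U,\R)$ for every $l\in\D'(\Omega)$ you get that $f(C)$ is weakly bounded for a compact neighbourhood $C$, hence bounded, hence contained in a single step $\Dlc_K(\Omega)$ by the Dieudonn\'e--Schwartz characterization of bounded sets in a strict LF-space; (2) testing against the functionals $\partial^{\alpha}\delta_x\in\mathcal{E}'(\Omega)\subseteq\D'(\Omega)$ and invoking the convenient-calculus results of \cite{KM} (Mackey-completeness of the Fr\'echet space $\Coo(\Omega,\R)$, so scalarwise smooth $\Rightarrow$ smooth, plus the exponential law) gives $f^{\vee}\in\Coo(U\times\Omega,\R)$, and then uniform continuity of finitely many $\partial^{\alpha}_x f^{\vee}$ on $\overline{V}\times K$ yields convergence in the norms \eqref{eq:FrechetOnD_K}, whence continuity into $\Dlc_K(\Omega)$ and, by continuity of the inclusion \eqref{eq:LCTopOnD}, into $\Dlc(\Omega)$. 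In effect you re-derive the harder half of Thm.~\ref{thm:2.3KR} (joint smoothness plus local uniform boundedness of supports) and then read off continuity, which is more information than the lemma asks for but perfectly valid. Two small repairs: the passage ``weakly bounded $\Rightarrow$ bounded'' is Mackey's theorem and holds in every locally convex space; Banach--Steinhaus on the barrelled space $\Dlc(\Omega)$ does not literally apply, since $f(C)$ is a set of vectors, not of linear maps defined on $\Dlc(\Omega)$ (if you insist on Banach--Steinhaus, apply it on the barrelled strong dual $\D'(\Omega)$ and use reflexivity). Likewise, for the final upgrade you only need continuity of the inclusion $\Dlc_K(\Omega)\hookrightarrow\Dlc(\Omega)$, which is immediate from \eqref{eq:LCTopOnD}; the stronger claim that it is a closed topological embedding is true but superfluous. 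Neither point is a gap in the argument.
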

To state the other cited results of \cite{KR06}, we need the following:
\begin{defn}
\label{def:UnifBoundedSupp}Let $U\in\OR$ and let $f:U\times\Omega\ra\R$
be a map. We say that 
\[
f\text{ is \emph{of uniformly bounded support} (with respect to }U\text{)}
\]
 if 
\[
\exists K\Subset\Omega\,\forall u\in U:\ \supp(f(u,-))\subseteq K.
\]
 We say that 
\[
f\text{ is \emph{locally of uniformly bounded support}}
\]
 if 
\[
\forall u\in U\,\exists V\text{open neigh. of }u\text{ in }U:\ f|_{V\times\Omega}\text{ is of uniformly bounded support}.
\]
 Finally we say that 
\[
f\text{ is \emph{pointwise of bounded support}}
\]
 if 
\[
\forall u\in U\,\exists K\Subset\Omega:\ \supp(f(u,-))\subseteq K.
\]

\end{defn}
Using this definition, we can state
\begin{lem}[2.2 of \cite{KR06}]
\label{lem:2.2KR}Let $U\in\OR$ and assume that $f\in\Coo(U\times\Omega,\R)$
is pointwise of bounded support. Then the following are equivalent 
\begin{enumerate}[leftmargin={*},label=(\roman*),align=left]
\item $f$ is locally of uniformly bounded support; 
\item $f^{\wedge}:U\ra\Dlc(\Omega)$ is continuous. 
\end{enumerate}
\end{lem}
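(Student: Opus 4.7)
The plan is to prove the two implications separately, since (i) $\Rightarrow$ (ii) is essentially a uniform-continuity argument on compacta while (ii) $\Rightarrow$ (i) relies on classical facts about the inductive limit topology of $\Dlc(\Omega)$. In both steps I would exploit the fact that the topology of $\Dlc(\Omega)$ is the finest locally convex topology making all inclusions $i_{K}:\Dlc_{K}(\Omega)\hookrightarrow\Dlc(\Omega)$ continuous (see \eqref{eq:LCTopOnD}), so that continuity of a map into $\Dlc(\Omega)$ can locally be tested against the Fréchet seminorms \eqref{eq:FrechetOnD_K} on some fixed $\Dlc_{K}(\Omega)$.

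For (i) $\Rightarrow$ (ii), I would fix $u_{0}\in U$ and use the hypothesis to pick an open neighbourhood $V$ of $u_{0}$ in $U$ and a compact $K\Subset\Omega$ with $\supp(f(v,-))\subseteq K$ for all $v\in V$. Then $f^{\wedge}|_{V}$ factors through $i_{K}$, so it suffices to prove continuity $V\to\Dlc_{K}(\Omega)$ with respect to the Fréchet seminorms $\|\cdot\|_{K,m}$. Shrinking $V$ to a relatively compact neighbourhood $V'\Subset V$, each partial derivative $\partial_{x}^{\alpha}f$ is continuous on $U\times\Omega$ and hence uniformly continuous on the compactum $\overline{V'}\times K$. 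This immediately gives $\|f(v,-)-f(u_{0},-)\|_{K,m}\to 0$ as $v\to u_{0}$ for every $m\in\N$, which is precisely the required continuity.

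For (ii) $\Rightarrow$ (i), I would argue by contradiction or, more cleanly, by a compactness/boundedness argument. Pick $u_{0}\in U$ and let $V$ be a compact neighbourhood of $u_{0}$ in $U$. Since $f^{\wedge}:U\to\Dlc(\Omega)$ is continuous, $f^{\wedge}(V)$ is compact, hence bounded, in $\Dlc(\Omega)$. The key classical fact I would invoke is that every bounded (in particular, every compact) subset of the LF-space $\Dlc(\Omega)$ is contained in some step $\Dlc_{K}(\Omega)$ with $K\Subset\Omega$ (a standard consequence of the strict inductive limit structure). Consequently there exists $K\Subset\Omega$ with $\supp(f(v,-))\subseteq K$ for all $v\in V$, which is the definition of $f$ being of uniformly bounded support on $V$; since $u_{0}$ was arbitrary this yields local uniformly bounded support. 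Note that the pointwise bounded support hypothesis is what guarantees $f^{\wedge}$ actually takes values in $\D(\Omega)$ so that this argument even makes sense.

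The main obstacle is the second implication, and in particular the appeal to the structural fact that bounded sets in the strict LF-topology of $\Dlc(\Omega)$ are confined to a single step $\Dlc_{K}(\Omega)$. Without this, one could imagine $\supp(f(v,-))$ drifting out to the boundary of $\Omega$ as $v$ varies in arbitrarily small neighbourhoods of $u_{0}$, in which case $f^{\wedge}$ could still be continuous relative to a weaker topology but not to the LF-topology; thus the whole content of (ii) $\Rightarrow$ (i) is that the LF-topology is fine enough to detect such drift, and this is exactly the point where the classical LF-structure theorem must be used.
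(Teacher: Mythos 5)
Your proof is correct. Note that the paper does not actually prove this lemma: it is quoted verbatim from Lemma~2.2 of \cite{KR06} ``for the reader's convenience'', so there is no in-paper argument to compare against; judged on its own, your two implications are both sound and are the standard ones. For (i)~$\Rightarrow$~(ii) the reduction to a single step $\Dlc_{K}(\Omega)$ and the uniform continuity of each $\partial_{x}^{\alpha}f$ on a compactum $\overline{V'}\times K$ do give $\|f(v,-)-f(u_{0},-)\|_{K,m}\to0$, and composing with the continuous inclusion $i_{K}$ yields continuity into $\Dlc(\Omega)$; the pointwise bounded support hypothesis is indeed exactly what makes $f^{\wedge}$ land in $\left|\D(\Omega)\right|$ in the first place. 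For (ii)~$\Rightarrow$~(i) the appeal to the Dieudonn\'e--Schwartz regularity of the strict LF-limit (every bounded, in particular every compact, subset of $\Dlc(\Omega)$ is contained in some $\Dlc_{K}(\Omega)$) is legitimate and correctly identified as the crux; the continuous image of a compact neighbourhood of $u_{0}$ is compact, hence bounded, hence confined to one step, which is precisely uniform boundedness of the supports near $u_{0}$. One cosmetic remark: the paper's formula \eqref{eq:LCTopOnD} literally describes the final (colimit) topology rather than the locally convex inductive limit topology you invoke, but this is immaterial here, since the final topology is finer, the inclusions $i_{K}$ are continuous for both, and bounded sets behave the same way, so both directions of your argument go through under either reading.
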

\begin{thm}[2.3 of \cite{KR06}]
\label{thm:2.3KR}Let $U\in\OR$. Then the following are equivalent: 
\begin{enumerate}[leftmargin={*},label=(\roman*),align=left]
\item $f\in\Coo(U,\D(\Omega))$; 
\item $f^{\vee}\in\Coo(U\times\Omega,\R)$ and $f^{\vee}$ is locally of
uniformly bounded support.
\end{enumerate}
\end{thm}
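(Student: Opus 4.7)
The equivalence expresses the interplay between the canonical diffeology on $\D(\Omega)$ (which, by Def.~\ref{def:canonicalDiffeolTVS}, tests smoothness against distributions, i.e., continuous linear functionals on $\Dlc(\Omega)$) and the classical LF-structure $\Dlc(\Omega)=\varinjlim_{K\Subset\Omega}\Dlc_{K}(\Omega)$. Two standard ingredients drive the argument: (a) the classical Dieudonn\'e--Schwartz fact that every compact subset of the strict LF-space $\Dlc(\Omega)$ is contained in some Fr\'echet step $\Dlc_{K}(\Omega)$; and (b) on each such Fr\'echet step, scalar-wise smoothness coincides with ordinary Fr\'echet-valued smoothness, and the latter is equivalent to joint smoothness of the associated map of two variables.

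\textbf{(i)$\Rightarrow$(ii).} Assuming $f\in\Coo(U,\D(\Omega))$, Lem.~\ref{lem:2.1KR} already gives continuity $f:U\to\Dlc(\Omega)$. Since each $f(u)$ lies in $\D(\Omega)$, $f^{\vee}$ is pointwise of bounded support. To upgrade this to \emph{local} uniform boundedness of support, fix $u_{0}\in U$, take a compact neighborhood $\bar V\subseteq U$, and observe that $f(\bar V)$ is compact in $\Dlc(\Omega)$; by (a) it lies inside some $\Dlc_{K}(\Omega)$, so $f^{\vee}|_{V\times\Omega}$ has support in $V\times K$. To get $f^{\vee}\in\Coo(U\times\Omega,\R)$, I would feed $f$ the distributions $l_{x,\alpha}(\phi):=\partial^{\alpha}\phi(x)$ to conclude that every mixed partial $\partial_{u}^{\gamma}\partial_{x}^{\alpha}f^{\vee}$ exists and is continuous on $V\times\Omega$; combined with the confinement into the Fr\'echet space $\Dlc_{K}(\Omega)$ and fact~(b), $f|_{V}:V\to\Dlc_{K}(\Omega)$ is Fr\'echet-smooth, which is equivalent to joint smoothness of $f^{\vee}$ on $V\times\Omega$. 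Since $u_{0}$ is arbitrary, Lem.~\ref{lem:2.2KR} delivers (ii).

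\textbf{(ii)$\Rightarrow$(i).} Given (ii), fix a continuous linear functional $l:\Dlc(\Omega)\to\R$. I would show $l\circ f$ is smooth locally at any $u_{0}\in U$: choose a neighborhood $V$ of $u_{0}$ on which $f^{\vee}$ has support in $V\times K$ for some $K\Subset\Omega$, so that $f(V)\subseteq\Dlc_{K}(\Omega)$. The restriction $l|_{\Dlc_{K}(\Omega)}$ has finite order $m$, so $|l(\phi)|\le C\,\|\phi\|_{K,m}$ for some constant $C$. Smoothness of $f^{\vee}$ with uniform support in $K$ implies that the difference quotients $\tfrac{1}{h}(f(u+he_{i})-f(u))$, their derivatives, and all iterated quotients converge in every seminorm $\|\cdot\|_{K,j}$; equivalently, $f|_{V}:V\to\Dlc_{K}(\Omega)$ is Fr\'echet-smooth, so $l\circ f|_{V}=l\circ(f|_{V})$ is smooth by continuity of $l$ on $\Dlc_{K}(\Omega)$. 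Since $u_{0}$ was arbitrary, $l\circ f\in\Coo(U,\R)$; as $l$ was arbitrary, $f$ is a plot of the canonical diffeology.

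\textbf{Main obstacle.} The technical heart of both directions is the \emph{local factorization} of $f$ through a single Fr\'echet step $\Dlc_{K}(\Omega)$. In the forward direction this requires turning the purely scalar-smooth information carried by (i) into continuity into $\Dlc(\Omega)$ (handled by Lem.~\ref{lem:2.1KR}) and then invoking (a); in the backward direction one must extract a uniform Fr\'echet-topology estimate from the smooth-support condition on $f^{\vee}$. Once this localization is in place, standard Fr\'echet calculus (fact (b) together with the finite-order property of distributions on $\Dlc_{K}(\Omega)$) closes both implications routinely.
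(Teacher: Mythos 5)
The paper contains no proof of this statement: it is imported verbatim from Kock and Reyes \cite{KR06} (their Thm.~2.3) and used as a black box, so there is no internal argument to compare yours against. Your sketch is essentially the standard proof of that result and is sound in outline: Lem.~\ref{lem:2.1KR} plus the Dieudonn\'e--Schwartz property of the strict LF-space $\Dlc(\Omega)$ traps $f$ locally in a single Fr\'echet step $\Dlc_{K}(\Omega)$, and Fr\'echet calculus (scalarwise smooth $\Rightarrow$ smooth, plus the exponential law $\Coo(V,\Coo(\Omega,\R))\cong\Coo(V\times\Omega,\R)$) closes both implications. Two points deserve to be made explicit. First, in (i)$\Rightarrow$(ii) the step of feeding $f$ the functionals $l_{x,\alpha}$ only yields, for each \emph{fixed} $x$, smoothness of $u\mapsto\partial_{x}^{\alpha}f^{\vee}(u,x)$; joint continuity of the mixed partials does not follow from that alone, so this detour is redundant rather than load-bearing --- the work is done by the scalarwise-smooth-implies-Fr\'echet-smooth fact once the confinement in $\Dlc_{K}(\Omega)$ is available. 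Second, to invoke that fact you need every continuous linear functional on $\Dlc_{K}(\Omega)$ to be the restriction of a distribution, since hypothesis (i) only gives smoothness of $l\circ f$ for $l\in\D'(\Omega)$; this is supplied by Hahn--Banach (the step carries the subspace topology) or by the structure theorem recorded in the paper as Thm.~\ref{thm:localFormDPrime_K}, which is exactly how the paper handles the analogous issue for $\D_{K}(\Omega)$ in Lem.~\ref{lem:D_KIsD_KPrecD}. With these two points filled in, your forward direction works as stated (note also that, once joint smoothness and continuity into $\Dlc(\Omega)$ are known, Lem.~\ref{lem:2.2KR} would give the local uniform boundedness of supports directly), and your converse direction --- uniform convergence on $K$ of the difference quotients and all their $x$-derivatives, followed by continuity (finite order) of $l|_{\Dlc_{K}(\Omega)}$ --- is the standard argument.
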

In other words, Thm~\ref{thm:2.3KR} says that $d\In{U}\D(\Omega)$
if and only if $d\In{U}\Dsmooth(\Omega)$ and $d^{\vee}$ is locally
of uniformly bounded support, and hence we have $\D(\Omega)\subseteq\Dsmooth(\Omega)$.

From these results we can also solve the same problem for the spaces
$\D_{K}(\Omega)$ and $\Dsmooth_{K}(\Omega)$. The following lemma
is analogous to Lem.~\ref{lem:2.1KR} for $\D_{K}(\Omega)$.
\begin{lem}
\label{lem:AnalogOf2.1KRForD_K} If $f\in\Coo(U,\D_{K}(\Omega))$
with $U\in\OR$ and $K\Subset\Omega$, then $f:U\ra\Dlc_{K}(\Omega)$
is continuous.\end{lem}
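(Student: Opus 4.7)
The plan is to reduce the statement to Thm.~\ref{thm:2.3KR} by enlarging the codomain of $f$ to $\D(\Omega)$, and then verify continuity directly against the Fréchet seminorms \eqref{eq:FrechetOnD_K}.

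First I would note that the inclusion $i_K:\D_K(\Omega)\hookrightarrow\D(\Omega)$ is smooth with respect to the canonical diffeologies: it is a continuous linear map between LCTVS, so composition with any continuous linear functional on $\D(\Omega)$ yields a continuous linear functional on $\D_K(\Omega)$, and this preserves canonical plots (Def.~\ref{def:canonicalDiffeolTVS}). Hence $i_K\circ f\in\Coo(U,\D(\Omega))$, and Thm.~\ref{thm:2.3KR} provides that the associated map $f^{\vee}:U\times\Omega\ra\R$, $f^{\vee}(u,x):=f(u)(x)$, is smooth. Since by hypothesis $f(u)\in\D_K(\Omega)$ for every $u\in U$, we have $\supp(f^{\vee}(u,-))\subseteq K$ throughout, so $f^{\vee}$ is actually of uniformly bounded support on all of $U$ with the single compact bound $K$ (stronger than the conclusion furnished by Thm.~\ref{thm:2.3KR}).

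To prove continuity into $\Dlc_K(\Omega)$, it then suffices to show that for every $u_{0}\in U$ and every $m\in\N$ we have
\[
\|f(u)-f(u_{0})\|_{K,m}=\max_{|\alpha|\le m}\max_{x\in K}\bigl|\partial^{\alpha}_{x}f^{\vee}(u,x)-\partial^{\alpha}_{x}f^{\vee}(u_{0},x)\bigr|\longrightarrow 0\quad\text{as }u\to u_{0}.
\]
Fix a relatively compact open neighborhood $V$ of $u_{0}$ in $U$. For each multi-index $\alpha$ with $|\alpha|\le m$, the function $(u,x)\mapsto\partial^{\alpha}_{x}f^{\vee}(u,x)$ is smooth, hence continuous, on $U\times\Omega$, and therefore uniformly continuous on the compact product $\overline{V}\times K$. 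This gives $\max_{x\in K}\bigl|\partial^{\alpha}_{x}f^{\vee}(u,x)-\partial^{\alpha}_{x}f^{\vee}(u_{0},x)\bigr|\to 0$ as $u\to u_{0}$, and since there are only finitely many multi-indices with $|\alpha|\le m$, taking the maximum over them preserves the convergence.

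The only delicate point is the identification (used in the first step) of the canonical diffeology of $\D_K(\Omega)$ as an LCTVS with the sub-diffeology inherited from $\D(\Omega)$; this is what legitimizes invoking Thm.~\ref{thm:2.3KR}. Once this essentially formal identification is in place, the remainder of the argument is a routine uniform-continuity estimate on compacta and poses no real obstacle.
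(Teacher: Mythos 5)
Your proof is correct, and its first half is essentially the paper's: both arguments establish $i_{K}\circ f\in\Coo(U,\D(\Omega))$ from the observation that post-composition with $i_{K}$ turns continuous linear functionals on $\Dlc(\Omega)$ into continuous linear functionals on $\Dlc_{K}(\Omega)$, so $i_{K}$ preserves canonical plots (your direct appeal to Def.~\ref{def:canonicalDiffeolTVS} is even a little cleaner than the paper's detour through Thm.~\ref{thm:codomFgenTestFunctionals}). Where you genuinely diverge is in extracting continuity into $\Dlc_{K}(\Omega)$: the paper applies Lem.~\ref{lem:2.1KR} to get continuity of $i_{K}\circ f$ into $\Dlc(\Omega)$ and then uses the fact that the Fr\'echet topology of $\Dlc_{K}(\Omega)$ is the initial (subspace) topology induced by $i_{K}$, a standard property of the strict inductive limit; you instead invoke Thm.~\ref{thm:2.3KR} to get joint smoothness of $f^{\vee}$ on $U\times\Omega$ (with all supports inside the single compact set $K$) and then verify convergence in each seminorm of \eqref{eq:FrechetOnD_K} directly, via uniform continuity of the finitely many derivatives $\partial_{x}^{\alpha}f^{\vee}$, $|\alpha|\le m$, on a compact product. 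Your route is more self-contained (no inductive-limit topology fact, no Lem.~\ref{lem:2.1KR}) at the cost of an explicit but routine estimate. Two minor points: choose the neighbourhood $V$ of $u_{0}$ so that $\overline{V}$ is compact \emph{and} $\overline{V}\subseteq U$ (a small closed ball suffices), so that $\overline{V}\times K\subseteq U\times\Omega$ and uniform continuity applies; and the ``delicate point'' you flag at the end is not actually needed in full --- you only use the easy direction (smoothness of $i_{K}$), which your first paragraph already proves, whereas the full identification of the canonical diffeology of $\D_{K}(\Omega)$ with the sub-diffeology of $\D(\Omega)$ is the later and harder Lem.~\ref{lem:D_KIsD_KPrecD}, requiring the extension of functionals via Thm.~\ref{thm:localFormDPrime_K}.
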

\begin{proof}
Since the inclusion map $i_{K}:\Dlc_{K}(\Omega)\hookrightarrow\Dlc(\Omega)$
is continuous linear, by post-composition it also takes continuous
linear functionals $l:\Dlc(\Omega)\ra\R$ into continuous linear functionals
$l\circ i_{K}:\Dlc_{K}(\Omega)\ra\R$. From Thm.~\ref{thm:codomFgenTestFunctionals}
it follows that $i_{K}\in\Coo(\D_{K}(\Omega),\D(\Omega))$ and hence
$i_{K}\circ f\in\Coo(U,\D(\Omega))$. Therefore, Lem.~\ref{lem:2.1KR}
implies that $i_{K}\circ f$ is continuous and hence the conclusion
since the topology on $\Dlc_{K}(\Omega)$ coincides with the initial
topology induced by $i_{K}$.
\end{proof}
The following lemma is analogous to Lem.~\ref{lem:2.2KR} for $\D_{K}(\Omega)$.
\begin{lem}
\label{lem:AnalogOf2.2KRForD_K}Let $U\in\OR$ and let $f\in\Coo(U\times\Omega,\R)$.
If there exists $K\Subset\Omega$ such that 
\begin{equation}
\forall u\in U:\ \supp(f(u,-))\subseteq K\label{eq:AnalogOf2.2KR_suppCond}
\end{equation}
 then $f^{\wedge}:U\ra\Dlc_{K}(\Omega)$ is continuous.\end{lem}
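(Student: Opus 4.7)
The plan is to verify continuity of $f^\wedge: U \to \Dlc_K(\Omega)$ directly against the Fréchet seminorms \eqref{eq:FrechetOnD_K}. First, the hypothesis \eqref{eq:AnalogOf2.2KR_suppCond} immediately gives $f^\wedge(u) = f(u,-) \in \D_K(\Omega)$ for every $u \in U$, so the map is well-defined set-theoretically; what remains is topological continuity at each $u_0 \in U$.

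The strategy is standard uniform continuity on a compact set. Fix $u_0 \in U$, $m \in \N$ and $\eta > 0$; we need to produce a neighborhood $W$ of $u_0$ in $U$ such that $\|f^\wedge(u) - f^\wedge(u_0)\|_{K,m} < \eta$ for every $u \in W$. Choose a relatively compact open neighborhood $V$ of $u_0$ with $\bar V \subseteq U$; then $\bar V \times K$ is a compact subset of $U \times \Omega$. For each multi-index $\alpha \in \N^n$ with $|\alpha| \le m$, the partial derivative $\partial^\alpha_x f$ lies in $\Coo(U \times \Omega, \R)$ and is therefore uniformly continuous on $\bar V \times K$. This yields $\delta_\alpha > 0$ such that $|\partial^\alpha_x f(u,x) - \partial^\alpha_x f(u_0,x)| < \eta$ whenever $u \in \bar V$ satisfies $|u - u_0| < \delta_\alpha$ and $x \in K$. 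Taking $\delta := \min_{|\alpha| \le m}\delta_\alpha$, which exists because only finitely many $\alpha$ intervene, the open set $W := V \cap \{u \in U \mid |u - u_0| < \delta\}$ does the job.

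There is essentially no obstacle: the whole argument reduces to finitely many uniform-continuity estimates at each seminorm level. As an alternative route, one could instead invoke Lem.~\ref{lem:2.2KR}: the condition \eqref{eq:AnalogOf2.2KR_suppCond} forces $f$ to be uniformly, hence locally, of uniformly bounded support as well as pointwise of bounded support, giving continuity of $f^\wedge: U \to \Dlc(\Omega)$; one then uses that the Fréchet topology on $\Dlc_K(\Omega)$ coincides with the subspace topology induced by $i_K : \Dlc_K(\Omega) \hookrightarrow \Dlc(\Omega)$, a standard feature of the strict LF structure of $\Dlc(\Omega)$. The direct seminorm computation above is preferable because it does not rely on this embedding fact and is a straightforward analogue of the argument behind Lem.~\ref{lem:2.2KR}.
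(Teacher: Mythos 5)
Your direct argument is correct, and it is genuinely different from the paper's proof. The paper proves this lemma in two lines by reduction: condition \eqref{eq:AnalogOf2.2KR_suppCond} trivially implies that $f$ is (pointwise and locally uniformly) of bounded support, so Lem.~\ref{lem:2.2KR} gives continuity of $i_{K}\circ f^{\wedge}:U\ra\Dlc(\Omega)$, and one concludes because $\Dlc_{K}(\Omega)$ carries the initial topology induced by $i_{K}:\Dlc_{K}(\Omega)\hookrightarrow\Dlc(\Omega)$ --- i.e.\ exactly the ``alternative route'' you sketch at the end (the subspace-topology fact you attribute to the strict LF structure is precisely what the paper invokes, without proof, there and in Lem.~\ref{lem:AnalogOf2.1KRForD_K}). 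Your main proof instead tests continuity directly against the norms \eqref{eq:FrechetOnD_K}: for $u_{0}\in U$, $m\in\N$, $\eta>0$, uniform continuity of each $\partial_{x}^{\alpha}f$ ($|\alpha|\le m$) on the compact set $\bar{V}\times K$ yields a neighbourhood of $u_{0}$ on which $\|f^{\wedge}(u)-f^{\wedge}(u_{0})\|_{K,m}<\eta$; since both functions are supported in $K$, these finitely many estimates are all that is needed. What your route buys is self-containedness: it uses neither the citation of Lem.~\ref{lem:2.2KR} (from \cite{KR06}) nor the LF/initial-topology fact, and it makes transparent why the fixed compact $K$ makes this case strictly easier than the genuinely local statement of Lem.~\ref{lem:2.2KR}. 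What the paper's route buys is brevity and structural parallelism with the neighbouring results, where the same initial-topology observation is reused.
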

\begin{proof}
Clearly $f$ is also locally of uniformly bounded support. Apply Lem.~\ref{lem:2.2KR},
we know that $i_{K}\circ f^{\wedge}:U\ra\Dlc(\Omega)$ is continuous.
Since $\Dlc_{K}(\Omega)$ has the initial topology from $i_{K}:\Dlc_{K}(\Omega)\hookrightarrow\Dlc(\Omega)$,
$f^{\wedge}:U\ra\Dlc_{K}(\Omega)$ is continuous.
\end{proof}
Finally, the following theorem is analogous to Thm.~\ref{thm:2.3KR}
for $\D_{K}(\Omega)$.
\begin{thm}
\label{thm:AnalogOf2.3KRForD_K}Let $U\in\OR$ and let $K\Subset\Omega$.
Then the following are equivalent: 
\begin{enumerate}[leftmargin={*},label=(\roman*),align=left]
\item \label{enu:AnalogOf2.3KR_DiffSmooth}$f\in\Coo(U,\D_{K}(\Omega))$; 
\item \label{enu:AnalogOf2.3KR_OrdSmooth}$f^{\vee}\in\Coo(U\times\Omega,\R)$
and $\supp(f^{\vee}(u,-))\subseteq K$ for all $u\in U$. 
\end{enumerate}
\end{thm}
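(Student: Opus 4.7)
The plan is to reduce this to Thm.~\ref{thm:2.3KR} (i.e.~Thm.~2.3 of \cite{KR06}) via the inclusion $i_{K}:\D_{K}(\Omega)\hookrightarrow\D(\Omega)$, exploiting the fact that the canonical diffeology on $\D_{K}(\Omega)$ coincides with the sub-diffeology from $\D(\Omega)$. First I would record the easy fact that $i_{K}$ is a smooth map between the canonical diffeologies: indeed $i_{K}$ is continuous linear between the LCTVS, so any $\lambda\in\Coo(\D(\Omega),\R)$ that is linear (continuous on $\Dlc(\Omega)$) composes with $i_{K}$ to give a continuous linear functional on $\Dlc_{K}(\Omega)$, hence $i_{K}$ sends canonical plots to canonical plots.

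For the implication \ref{enu:AnalogOf2.3KR_DiffSmooth} $\Rightarrow$ \ref{enu:AnalogOf2.3KR_OrdSmooth}, post-composition with $i_{K}$ yields $i_{K}\circ f\in\Coo(U,\D(\Omega))$, so Thm.~\ref{thm:2.3KR} gives $f^{\vee}=(i_{K}\circ f)^{\vee}\in\Coo(U\times\Omega,\R)$. The support condition $\supp(f^{\vee}(u,-))\subseteq K$ is immediate because $f(u)\in|\D_{K}(\Omega)|$ for every $u\in U$.

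For the converse \ref{enu:AnalogOf2.3KR_OrdSmooth} $\Rightarrow$ \ref{enu:AnalogOf2.3KR_DiffSmooth}, the support condition ensures that $f^{\vee}$ is pointwise of bounded support, hence trivially locally of uniformly bounded support (with the same $K$ on all of $U$). Thm.~\ref{thm:2.3KR} then yields $i_{K}\circ f\in\Coo(U,\D(\Omega))$. To promote this to smoothness into $\D_{K}(\Omega)$, I would verify that the canonical diffeology on $\D_{K}(\Omega)$ agrees with the sub-diffeology inherited from $\D(\Omega)$: by Hahn--Banach every continuous linear $l:\Dlc_{K}(\Omega)\to\R$ extends to some continuous linear $\lambda:\Dlc(\Omega)\to\R$ with $\lambda\circ i_{K}=l$, so that for any $d:U\to|\D_{K}(\Omega)|$ the condition ``$\lambda\circ(i_{K}\circ d)\in\Coo(U,\R)$ for all $\lambda$'' is equivalent to ``$l\circ d\in\Coo(U,\R)$ for all $l$''. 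Applying this to $d=f$ gives $f\in\Coo(U,\D_{K}(\Omega))$.

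The main obstacle is precisely the Hahn--Banach step in the converse direction: one must make sure that the LCTVS-defined canonical diffeology on the subspace $\D_{K}(\Omega)$ really coincides with the diffeological subspace structure, rather than being only a coarser diffeology. Once this identification is in place, the proof is a routine translation between smoothness into the target and smoothness of the adjoint, handled uniformly by Thm.~\ref{thm:2.3KR}. No new computation with the Fr\'echet seminorms \eqref{eq:FrechetOnD_K} is needed because the support restriction $K$ already trivializes the ``local uniform boundedness'' hypothesis of the referenced theorem.
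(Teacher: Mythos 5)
Your proof is correct, and its skeleton is the same as the paper's: both directions are reduced to Thm.~\ref{thm:2.3KR} through the inclusion $i_{K}:\D_{K}(\Omega)\hookrightarrow\D(\Omega)$ (whose smoothness you justify exactly as in Lem.~\ref{lem:AnalogOf2.1KRForD_K}), and the whole content of the converse is that the canonical diffeology on $\D_{K}(\Omega)$ coincides with the sub-diffeology inherited from $\D(\Omega)$, i.e.\ Lem.~\ref{lem:D_KIsD_KPrecD}, which in turn amounts to extending an arbitrary continuous linear $l:\Dlc_{K}(\Omega)\ra\R$ to a distribution $\lambda\in\D'(\Omega)$ with $\lambda\circ i_{K}=l$. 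The only real divergence is the tool used for this extension step: the paper invokes the local structure theorem (Thm.~\ref{thm:localFormDPrime_K}), writing $l=\langle\partial^{\alpha}g,-\rangle$ with $g\in\mC^{0}(\Omega,\R)$, so the extension is exhibited explicitly; you instead invoke Hahn--Banach. Your route is softer and shorter, but note that Hahn--Banach extends functionals that are continuous for the topology \emph{induced} by $\Dlc(\Omega)$ on the subspace, so your step tacitly uses the standard fact that the LF-topology of $\Dlc(\Omega)$ restricts on $\D_{K}(\Omega)$ to its Fr\'echet topology \eqref{eq:FrechetOnD_K} (a property of strict inductive limits); with that fact made explicit your argument is complete, while the paper's structure-theorem argument bypasses it at the cost of proving (or citing) the representation of elements of $\Dlc_{K}(\Omega)'$ as finite-order derivatives of continuous functions.
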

\begin{proof}
\ref{enu:AnalogOf2.3KR_DiffSmooth} $\Rightarrow$ \ref{enu:AnalogOf2.3KR_OrdSmooth}.
We already proved in Lem.~\ref{lem:AnalogOf2.1KRForD_K} that the
inclusion map $i_{K}\in\Coo(\D_{K}(\Omega),\D(\Omega))$, so $i_{K}\circ f\in\Coo(U,\D(\Omega))$.
By Thm.~\ref{thm:2.3KR} we have $\left(i_{K}\circ f\right)^{\vee}=f^{\vee}\in\Coo(U\times\Omega,\R)$.
The second part of the conclusion follows from the codomain $\D_{K}(\Omega)$
of $f$ in \ref{enu:AnalogOf2.3KR_DiffSmooth}.\\
 \ref{enu:AnalogOf2.3KR_OrdSmooth} $\Rightarrow$ \ref{enu:AnalogOf2.3KR_DiffSmooth}.
Assumption \ref{enu:AnalogOf2.3KR_OrdSmooth} implies that $f$ is
locally of uniformly bounded support. From Thm.~\ref{thm:2.3KR}
we thus obtain that $f\in\Coo(U,\D(\Omega))$. But our assumption
implies that $f(U)\subseteq\left|\D_{K}(\Omega)\right|$. So the conclusion
follows from the following Lem.~\ref{lem:D_KIsD_KPrecD}.
\begin{lem}
\label{lem:D_KIsD_KPrecD}If $K\Subset\Omega$, then $\left(\left|\D_{K}(\Omega)\right|\prec\D(\Omega)\right)=\D_{K}(\Omega)$.\end{lem}
\begin{proof}
We have to prove that figures of both spaces are equal.\\
$\left(\left|\D_{K}(\Omega)\right|\prec\D(\Omega)\right)\supseteq\D_{K}(\Omega)$:
This follows directly from the fact that the inclusion map $i_{K}\in\Coo(\D_{K}(\Omega),\D(\Omega))$.\\
$\left(\left|\D_{K}(\Omega)\right|\prec\D(\Omega)\right)\subseteq\D_{K}(\Omega)$:
Assume that $d:U\ra\D_{K}(\Omega)$ is a map such that $i_{K}\circ d\In{U}\D(\Omega)$,
i.e., $\lambda\circ i_{K}\circ d\in\Coo(U,\R)$ for all $\lambda\in\D'(\Omega)$.
We need to prove that $l\circ d\in\Coo(U,\R)$ for all continuous
linear maps $l:\Dlc_{K}(\Omega)\ra\R$. So the problem is to extend
any such given $l$ to some $\lambda\in\D'(\Omega)$. To this end,
we can repeat the usual proof of the local form of distributions as
derivatives of continuous functions to obtain the following:\end{proof}
\begin{thm}
\label{thm:localFormDPrime_K}For any continuous linear map $l:\Dlc_{K}(\Omega)\ra\R$
there exist $g\in\mC^{0}(\Omega,\R)$ and $\alpha\in\N^{n}$ such
that 
\[
l(\phi)=\langle\partial^{\alpha}g,\phi\rangle\qquad\forall\phi\in\D_{K}(\Omega).
\]
 Therefore, the continuous functional $\langle\partial^{\alpha}g,-\rangle:\Dlc(\Omega)\ra\R$
extends the functional $l$.
\end{thm}

The conclusion follows by applying this theorem.
\end{proof}
\begin{cor}
\label{cor:D_KEqD_Ks}If $K\Subset\Omega$, then $\D_{K}(\Omega)=\Dsmooth_{K}(\Omega)$.\end{cor}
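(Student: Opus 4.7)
The plan is to observe that the corollary is essentially an immediate repackaging of the preceding Thm.~\ref{thm:AnalogOf2.3KRForD_K}, so the work is mainly unfolding definitions and matching them up; no further analytic content is required. I will show that the plots of $\D_{K}(\Omega)$ (the canonical diffeology inherited from the Fr\'echet LCTVS structure) and the plots of $\Dsmooth_{K}(\Omega)$ (the sub-diffeology inherited from $\Coo(\Omega,\R)$) coincide as subsets of $\Set(U,|\D_{K}(\Omega)|)$ for every $U\in\OR$.

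First I unfold the left-hand side. By Def.~\ref{def:canonicalDiffeolTVS}, a map $d:U\ra|\D_{K}(\Omega)|$ satisfies $d\In{U}\D_{K}(\Omega)$ iff $l\circ d\in\Coo(U,\R)$ for every continuous linear functional $l:\Dlc_{K}(\Omega)\ra\R$, which is exactly the statement that $d\in\Coo(U,\D_{K}(\Omega))$ in the sense used in Thm.~\ref{thm:AnalogOf2.3KRForD_K}. By that theorem, this is equivalent to the conjunction of $d^{\vee}\in\Coo(U\times\Omega,\R)$ and $\supp(d^{\vee}(u,-))\subseteq K$ for every $u\in U$.

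Next I unfold the right-hand side. By definition, $\Dsmooth_{K}(\Omega)=(|\D_{K}(\Omega)|\prec\Dsmooth(\Omega))$ and $\Dsmooth(\Omega)=(|\D(\Omega)|\prec\Coo(\Omega,\R))$, so the sub-diffeology description gives $d\In{U}\Dsmooth_{K}(\Omega)$ iff $d(U)\subseteq|\D_{K}(\Omega)|$ and $d^{\vee}\in\Coo(U\times\Omega,\R)$. The inclusion $d(U)\subseteq|\D_{K}(\Omega)|$ is precisely the condition $\supp(d^{\vee}(u,-))\subseteq K$ for every $u\in U$. Thus the two characterizations coincide literally, yielding $\D_{K}(\Omega)=\Dsmooth_{K}(\Omega)$.

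There is essentially no obstacle: the deep content has already been absorbed by Thm.~\ref{thm:AnalogOf2.3KRForD_K}, whose proof relied on Lem.~\ref{lem:D_KIsD_KPrecD} and the local representation of compactly supported distributions as derivatives of continuous functions (Thm.~\ref{thm:localFormDPrime_K}). The only care needed here is to cite Thm.~\ref{thm:AnalogOf2.3KRForD_K} correctly and to verify that the sub-diffeology of $\Coo(\Omega,\R)$ restricted to functions with support in $K$ produces precisely the ``pointwise'' support condition appearing in that theorem, which is immediate.
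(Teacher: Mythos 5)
Your proposal is correct and matches the paper's own argument: the paper proves the corollary by exactly the same observation, namely that Thm.~\ref{thm:AnalogOf2.3KRForD_K} characterizes $d\In{U}\D_{K}(\Omega)$ by the two conditions ($d^{\vee}\in\Coo(U\times\Omega,\R)$ and the support condition) which together are precisely the definition of $d\In{U}\Dsmooth_{K}(\Omega)$. Your extra unfolding of both diffeologies is just a more explicit write-up of the same one-line deduction.
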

\begin{fact*}
Indeed Thm.~\ref{thm:AnalogOf2.3KRForD_K} says that $d\In{U}\D_{K}(\Omega)$
if and only if $d\In{U}\Dsmooth_{K}(\Omega)$.
\end{fact*}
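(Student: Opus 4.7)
The plan is to observe that the corollary follows essentially directly from Thm.~\ref{thm:AnalogOf2.3KRForD_K}, together with Cartesian closedness of $\Diff$ applied to the subspace $\Dsmooth_{K}(\Omega) \subseteq \Coo(\Omega,\R)$. The goal is to show that the diffeology $\D_K(\Omega)$ (generated in $\Fgen$ by the continuous linear functionals on $\Dlc_K(\Omega)$) coincides with the sub-diffeology $\Dsmooth_K(\Omega)$ inherited from $\Coo(\Omega,\R)$, by checking that their $U$-figures agree for every $U \in \OR$.

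First I would unwind the definition of the sub-diffeology: a map $d \colon U \to |\D_K(\Omega)|$ satisfies $d \In{U} \Dsmooth_K(\Omega)$ exactly when (a) $d(u) \in \D_K(\Omega)$ for every $u \in U$, i.e.\ $\supp(d(u)) \subseteq K$, and (b) the composition $i \circ d \In{U} \Coo(\Omega,\R)$, where $i \colon \D_K(\Omega) \hookrightarrow \Coo(\Omega,\R)$ is the inclusion. By Cartesian closedness of $\Diff$, condition (b) is equivalent to $(i \circ d)^{\vee} = d^{\vee} \in \Coo(U \times \Omega, \R)$. Thus $d \In{U} \Dsmooth_K(\Omega)$ if and only if $d^{\vee} \in \Coo(U \times \Omega,\R)$ and $\supp(d^{\vee}(u,-)) \subseteq K$ for all $u \in U$.

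Next I would compare this to the characterization of $\D_K(\Omega)$-plots: Thm.~\ref{thm:AnalogOf2.3KRForD_K} states that $d \In{U} \D_K(\Omega)$ iff $d^{\vee} \in \Coo(U \times \Omega, \R)$ and $\supp(d^{\vee}(u,-)) \subseteq K$ for all $u \in U$. Since the two equivalent characterizations match verbatim, one concludes $\D_K(\Omega) = \Dsmooth_K(\Omega)$.

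There is no serious obstacle to overcome here, since all the heavy lifting has already been done in Thm.~\ref{thm:AnalogOf2.3KRForD_K} (in particular in Lem.~\ref{lem:D_KIsD_KPrecD} and Thm.~\ref{thm:localFormDPrime_K}, which provide the extension of continuous linear functionals from $\Dlc_K(\Omega)$ to $\Dlc(\Omega)$). The only subtle point worth stating explicitly in the proof is the invocation of Cartesian closedness of $\Diff$ to turn the subspace condition $i \circ d \In{U} \Coo(\Omega,\R)$ into the smoothness of $d^{\vee}$ on $U \times \Omega$; everything else is purely definitional.
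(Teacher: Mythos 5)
Your proposal is correct and matches the paper's argument: the paper likewise treats this as an immediate unwinding of the definition of $\Dsmooth_{K}(\Omega)$ (where $i\circ d\In{U}\Coo(\Omega,\R)$ is already identified with $d^{\vee}\in\Coo(U\times\Omega,\R)$ via Cartesian closedness) against the characterization of plots of $\D_{K}(\Omega)$ in Thm.~\ref{thm:AnalogOf2.3KRForD_K}. No gaps; the two characterizations coincide verbatim, exactly as you say.
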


\subsection{The locally convex topology and the $D$-topology on $\D_{K}(\Omega)$
and $\D(\Omega)$}

In this section we present some results about functionals on the spaces
$\D_{K}(\Omega)$ and $\D(\Omega)$ which are continuous with respect
to the locally convex topology and the $D$-topology. The first result
follows at once from Lem.~\ref{lem:2.1KR} and Lem.~\ref{lem:AnalogOf2.1KRForD_K}:
\begin{cor}
\label{cor:D-TopFinerLCTop}On the spaces $\D_{K}(\Omega)$ and $\D(\Omega)$,
the $D$-topology is finer than the locally convex topology.
\end{cor}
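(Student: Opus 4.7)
The plan is to reduce the assertion to the two cited lemmas by using the universal property of the $D$-topology. Saying that the $D$-topology $\tau_X$ is finer than the locally convex topology $\tau_X^{\text{LC}}$ on $X\in\{\D_K(\Omega),\D(\Omega)\}$ is equivalent to asserting that the identity set map $\mathrm{id}:(|X|,\tau_X)\to(|X|,\tau_X^{\text{LC}})$ is continuous, i.e.\ that every LC-open subset is $D$-open.

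By the very definition of the $D$-topology as the final topology induced by all plots $d\In{U}X$, the map $\mathrm{id}$ is $\tau_X$-to-$\tau_X^{\text{LC}}$ continuous if and only if for every $U\in\OR$ and every $d\in\Coo(U,X)$ the composite $\mathrm{id}\circ d = d:U\to(|X|,\tau_X^{\text{LC}})$ is continuous in the ordinary sense. So it suffices to verify, for each plot $d$ of the canonical diffeology, continuity of $d$ into the locally convex space $\Dlc(\Omega)$ (respectively $\Dlc_K(\Omega)$).

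For $X=\D(\Omega)$, this is exactly the content of Lem.~\ref{lem:2.1KR}: any smooth map $U\to\D(\Omega)$ is automatically continuous into $\Dlc(\Omega)$. For $X=\D_K(\Omega)$, the same role is played by Lem.~\ref{lem:AnalogOf2.1KRForD_K}. Putting the two cases together yields the corollary.

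There is essentially no obstacle: the corollary is packaging work, and the two cited lemmas do all the real analytic heavy lifting (controlling the smooth plots of the canonical diffeology in terms of locally uniformly bounded supports and continuity into the Fréchet/LF structure). The only point worth stating explicitly in the write-up is the reformulation of ``finer topology'' via continuity of the identity, together with the final-topology characterization of $D$-continuity; after that, one simply invokes Lem.~\ref{lem:2.1KR} and Lem.~\ref{lem:AnalogOf2.1KRForD_K}.
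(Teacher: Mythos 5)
Your proof is correct and matches the paper's own argument, which likewise deduces the corollary immediately from Lem.~\ref{lem:2.1KR} and Lem.~\ref{lem:AnalogOf2.1KRForD_K}; you have merely made explicit the (standard) final-topology reformulation that the paper leaves implicit.
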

It remains an open problem whether the $D$-topology is strictly finer
than the locally convex topology or not. We first study the behaviour
of maps of the form $\lambda:\D(\Omega)\ra\D(\Omega')$, where henceforth
we always assume that $\Omega'\subseteq\R^{d}$ is open.
\begin{thm}
\label{thm:DIsCVS}\ 
\begin{enumerate}[leftmargin={*},label=(\roman*),align=left]
\item $\D(\Omega)$ is a CVS. 
\item If $T\in\Coo(\D(\Omega),\R)$ is linear, then $T:\Dlc(\Omega)\ra\R$
is continuous. 
\end{enumerate}
\end{thm}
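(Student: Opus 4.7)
The plan is to reduce both parts to standard results from convenient calculus (as developed in \cite{FK,KM}), using the fact that the canonical diffeology on a LCTVS is set up precisely so that plots of type $\R$ are scalarwise smooth curves.

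For part (i), recall that a CVS is, by definition, a locally convex vector space which is Mackey complete (equivalently, $c^\infty$-complete). I would simply observe that $\D(\Omega)$ is the strict inductive limit of the Fr\'echet spaces $\D_{K}(\Omega)$ as $K\Subset\Omega$, hence is an LF-space, and every LF-space is complete; completeness implies Mackey completeness, so $\D(\Omega)$ is a CVS.

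For part (ii), the first step is to connect diffeological smoothness with convenient smoothness. By Def.~\ref{def:canonicalDiffeolTVS}, a map $d:\R\to\D(\Omega)$ is a plot iff $\lambda\circ d\in\Coo(\R,\R)$ for every continuous linear $\lambda\in\D'(\Omega)$. Since $\D(\Omega)$ is $c^\infty$-complete by part (i), the theorem of Kriegl--Michor identifies such scalarwise-smooth curves with the smooth curves of $\D(\Omega)$ in the convenient sense. Hence, if $T\in\Coo(\D(\Omega),\R)$ is smooth in the diffeological sense, then $T\circ c\in\Coo(\R,\R)$ for every convenient-smooth curve $c:\R\to\D(\Omega)$.

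Now I would invoke the standard principle of convenient calculus: a linear map between CVS which takes smooth curves to smooth curves is automatically bounded (i.e., maps bounded sets to bounded sets). Applied to our $T$, this yields that $T:\Dlc(\Omega)\to\R$ is bounded. To close the argument, I would use that $\D(\Omega)$ is bornological (as a countable strict inductive limit of Fr\'echet, hence bornological, spaces); on a bornological locally convex space every bounded linear functional is continuous, which gives continuity of $T:\Dlc(\Omega)\to\R$. The main obstacle, if any, is to verify cleanly that the plots of type $\R$ of the canonical diffeology coincide with the smooth curves of the underlying CVS in the Kriegl--Michor sense; this is where $c^\infty$-completeness from part (i) is essential, and once it is granted the rest is a direct appeal to bornologicality of the LF-space $\D(\Omega)$.
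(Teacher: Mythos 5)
Your argument is correct, but it is worth noting that the paper does not actually prove Thm.~\ref{thm:DIsCVS}: it simply cites \cite[pages 5, 6, 9]{KR06} and \cite[Lem.~6.2]{KM}, so in effect it defers to the same convenient-calculus machinery that you assemble explicitly. Your route -- (i) $\D(\Omega)$ is a strict LF-space, hence complete, hence Mackey complete; (ii) convenient-smooth curves are scalarwise smooth, hence plots of the canonical diffeology of Def.~\ref{def:canonicalDiffeolTVS}, so a diffeologically smooth linear $T$ sends smooth curves to smooth curves, is therefore bounded by the standard characterization of bounded linear maps in \cite{KM}, and is then continuous because the LF-space $\D(\Omega)$ is bornological -- is a perfectly sound reconstruction of what those references contain, and it makes the logical dependencies visible where the paper leaves them implicit. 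One small simplification: for (ii) you only need the easy inclusion ``convenient-smooth curve $\Rightarrow$ plot of type $\R$'' (composition with a continuous linear functional preserves smoothness in any locally convex space), not the full coincidence of plots with convenient-smooth curves; so the appeal to Mackey completeness, and hence to part (i), is not actually required there -- it is needed only if you want the reverse identification, or for (i) itself.
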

The same results hold for $\D_{K}(\Omega)$.
\begin{proof}
See \cite[page 5, 6, 9]{KR06} or \cite[Lem.~6.2, page 67]{KM}.
\end{proof}
The following lemma is a trivial consequence of Thm.~\ref{thm:codomFgenTestFunctionals},
but we prefer to state it for completeness.
\begin{lem}
\label{lem:linearContImpliesDiffsmooth}If $\lambda:\Dlc(\Omega)\ra\Dlc(\Omega')$
is continuous linear, then $\lambda\in\Coo(\D(\Omega),\D(\Omega'))$.
\end{lem}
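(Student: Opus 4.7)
My plan is to invoke Thm.~\ref{thm:codomFgenTestFunctionals} with $X=\D(\Omega)$ and $Y=\D(\Omega')$. By Def.~\ref{def:canonicalDiffeolTVS}, the canonical diffeology on $\D(\Omega')$ is functionally generated by the globally defined family $\F_{\D(\Omega')}:=\D'(\Omega')$ of all continuous linear functionals (with $\F_{A}=\emptyset$ for every proper $D$-open $A\subsetneq\D(\Omega')$). So to prove $\lambda\in\Coo(\D(\Omega),\D(\Omega'))$ I must verify: (a) $\lambda:\TD(\D(\Omega))\ra\TD(\D(\Omega'))$ is continuous, and (b) for every $l\in\D'(\Omega')$, $l\circ\lambda\in\Coo(\D(\Omega),\R)$.

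Condition (b) is immediate: the composite $l\circ\lambda:\Dlc(\Omega)\ra\R$ is continuous linear, hence belongs to $\D'(\Omega)$. By the very definition of the canonical diffeology on $\D(\Omega)$, any element of $\D'(\Omega)$ composes smoothly with every plot, so $l\circ\lambda$ belongs to $\Coo(\D(\Omega),\R)$.

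Condition (a) is the only mildly delicate point, because a priori the $D$-topology on $\D(\Omega')$ could be strictly finer than the locally convex topology (Cor.~\ref{cor:D-TopFinerLCTop}), so the $LC$-continuity of $\lambda$ is not automatically enough. I would bypass this by a plot-level argument: given any $d\In{U}\D(\Omega)$, part (b) yields that $(l\circ\lambda)\circ d=l\circ(\lambda\circ d)$ is smooth on $U$ for every $l\in\D'(\Omega')$, so by Def.~\ref{def:canonicalDiffeolTVS} the composition $\lambda\circ d$ is a plot of $\D(\Omega')$. Plots are automatically continuous into the $D$-topology of their target, so for every $D$-open $V\subseteq\D(\Omega')$ the preimage $d^{-1}(\lambda^{-1}(V))=(\lambda\circ d)^{-1}(V)$ is open in $U$. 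Varying $d$ and using the definition of the $D$-topology as the final topology with respect to all plots, $\lambda^{-1}(V)$ is $D$-open in $\D(\Omega)$, establishing (a).

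The main (very small) obstacle is exactly this ``$D$-topology vs $LC$-topology'' subtlety in (a); once one reroutes through plot preservation as above, the whole statement becomes, as the author announces, a direct unpacking of Thm.~\ref{thm:codomFgenTestFunctionals}.
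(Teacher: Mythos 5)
Your proof is correct and follows essentially the route the paper intends, namely reducing the lemma to Thm.~\ref{thm:codomFgenTestFunctionals} applied to the canonical (functionally generated) diffeology of $\D(\Omega')$, with the only substantive verification being that $l\circ\lambda\in\D'(\Omega)$ for every $l\in\D'(\Omega')$; the paper itself records the lemma as a ``trivial consequence'' of that theorem without further detail. Note only that your plot-level argument for condition (a) is already a complete proof on its own: once $\lambda\circ d$ is shown to be a plot of $\D(\Omega')$ for every $d\In{U}\D(\Omega)$ (immediate from Def.~\ref{def:canonicalDiffeolTVS}, which imposes no continuity requirement on plots), smoothness of $\lambda$ follows directly from Def.~\ref{def:diffeoMorph}, so the final appeal to Thm.~\ref{thm:codomFgenTestFunctionals} --- and hence the $D$-continuity discussion --- is harmless but redundant.
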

In the following results, we show that if a linear map $|\D(\Omega)|\ra\R$
is continuous with respect to the $D$-topology, then it is a distribution:
\begin{thm}
\label{thm:DContImpliesDistr}If $l:T_{D}(\D(\Omega))\ra\R$ is continuous
linear, then 
\[
l\in\Coo(\D(\Omega),\R)\cap\D'(\Omega).
\]

\end{thm}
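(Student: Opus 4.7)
The plan is to first prove $l \in \Coo(\D(\Omega), \R)$; once this is established, the assertion $l \in \D'(\Omega)$ follows at once from Thm.~\ref{thm:DIsCVS}(ii). Smoothness of $l$ amounts to $l \circ d \in \Coo(U, \R)$ for every plot $d \In{U} \D(\Omega)$, and by Boman's theorem it suffices to verify this when $U = \R$: given a general plot $d$ and a smooth curve $\gamma : \R \to U$, the composition $d \circ \gamma$ is a smooth curve in $\D(\Omega)$, so smoothness of $l \circ c$ for every such curve $c : \R \to \D(\Omega)$ will force $l \circ d$ to be smooth.

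Fix such a $c$. By Thm.~\ref{thm:2.3KR}, $c^{\vee} : \R \times \Omega \to \R$ is smooth and locally of uniformly bounded support. The crucial step is to upgrade the ordinary difference quotient to a plot of $\D(\Omega)$ via Hadamard's lemma: set
\[
F(h, u, x) := \int_{0}^{1} \partial_{u} c^{\vee}(u + t h, x) \diff{t} \qquad \forall (h, u, x) \in \R \times \R \times \Omega,
\]
so that $F(h, u, x) = h^{-1}\bigl(c^{\vee}(u + h, x) - c^{\vee}(u, x)\bigr)$ for $h \neq 0$ and $F(0, u, x) = c'(u)(x)$, where $c'(u) := \partial_{u} c^{\vee}(u, -) \in \D(\Omega)$. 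Then $F$ is smooth on $\R^{2} \times \Omega$, and it is locally of uniformly bounded support in $x$: given $(h_{0}, u_{0})$, choose an open neighborhood $V$ of $u_{0}$ and $K \Subset \Omega$ with $\supp c^{\vee}(u, -) \subseteq K$ for all $u \in V$; shrinking the neighborhood of $(h_{0}, u_{0})$ so that $u + t h \in V$ for every $t \in [0, 1]$ forces $\supp F(h, u, -) \subseteq K$. Hence $\tilde{c} := F^{\wedge} : \R^{2} \to \D(\Omega)$ is a plot by Thm.~\ref{thm:2.3KR}.

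Since plots are $D$-continuous and $l$ is $D$-continuous by hypothesis, $l \circ \tilde{c} : \R^{2} \to \R$ is continuous, and linearity of $l$ yields
\[
l(\tilde{c}(h, u)) = \frac{l(c(u + h)) - l(c(u))}{h} \qquad \forall h \neq 0.
\]
Passing to the limit $h \to 0$ gives $(l \circ c)'(u) = l(\tilde{c}(0, u)) = l(c'(u))$, so $l \circ c$ is of class $C^{1}$. The map $c'$ is itself a plot of $\D(\Omega)$ because $\partial_{u} c^{\vee}$ inherits both the smoothness and the local uniform support bound from $c^{\vee}$, so the same construction applied to $c'$ shows $l \circ c'$ is $C^{1}$. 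Iterating, $l \circ c \in \Coo(\R, \R)$, which proves $l \in \Coo(\D(\Omega), \R)$; combining with Thm.~\ref{thm:DIsCVS}(ii) completes the proof.

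The main obstacle is the construction of the plot $\tilde{c}$: Hadamard's lemma handles the smoothness of $F$ across $h = 0$, but one must simultaneously propagate the locally uniform support condition through the auxiliary variable $h$. Without producing a genuine plot of $\D(\Omega)$ one cannot exploit the hypothesis that $l$ is $D$-continuous (which is strictly weaker than locally convex continuity, cf.\ Cor.~\ref{cor:D-TopFinerLCTop}), and this Hadamard-type construction is precisely what converts $D$-continuity of $l$ into differentiability of $l \circ c$.
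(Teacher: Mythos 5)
Your proof is correct, and it takes a genuinely lighter route than the paper's. The paper works with an arbitrary plot $d\In{U}\D(\Omega)$ and proves that the limit of the incremental ratios $\lim_{h\to0}\frac{d(u+he_{i})-d(u)}{h}$ exists in $T_{D}(\D(\Omega))$; this forces it to introduce the auxiliary spaces $\R^{V\times(\Omega)}$ and $\D(\Omega)\uparrow V$, to prove smoothness (hence $D$-continuity) of the adjoint map $(-)^{\wedge}$ and of the evaluations $\text{ev}_{v}$, and to invoke Lem.~\ref{lem:expSpacesHaus} (Hausdorffness of $T_{D}(\D(\Omega))$) so that the limit is unambiguous and can be commuted with $l$. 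You avoid all of this: after reducing to curve plots via Boman's theorem \cite{Bom} (legitimate, since $d\circ\gamma\In{\R}\D(\Omega)$ by the presheaf axiom), you package the Hadamard difference quotient as a single two-parameter plot $\tilde{c}=F^{\wedge}\In{\R^{2}}\D(\Omega)$ and take limits only in $\R$, using nothing beyond the continuity of $l\circ\tilde{c}$ and linearity of $l$; no limit in $\D(\Omega)$ is ever formed, so Hausdorffness and the auxiliary function spaces never enter. The iteration step ($c'$ is again a plot, giving $C^{\infty}$ by induction) matches the paper's inductive use of $\left(\frac{\partial d^{\vee}}{\partial e_{i}}\right)^{\wedge}\In{U}\D(\Omega)$, and the final passage to $l\in\D'(\Omega)$ via Thm.~\ref{thm:DIsCVS} is identical. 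What the paper's heavier formulation buys is a by-product stated afterwards (each plot is smooth in the usual topological-vector-space sense, with $\partial^{\alpha}d$ again a plot), which your argument does not directly deliver; what yours buys is brevity and independence from Lem.~\ref{lem:expSpacesHaus}.

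One small repair: your verification that $F$ is locally of uniformly bounded support only works as written at points $(h_{0},u_{0})$ with $h_{0}$ small, since for large $h_{0}$ no shrinking of a neighbourhood of $(h_{0},u_{0})$ keeps the whole segment $\{u+th\mid t\in[0,1]\}$ inside a neighbourhood $V$ of $u_{0}$ alone. Either cover the compact segment from $u_{0}$ to $u_{0}+h_{0}$ by finitely many neighbourhoods on which the support is uniformly bounded and take the union of the corresponding compact sets, or simply note that you only ever use $\tilde{c}$ (and the continuity of $l\circ\tilde{c}$) on a neighbourhood of $(0,u_{0})$, where your shrinking argument is already complete --- exactly as the paper restricts to $H\times V=(-a,a)\times V$.
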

The schema to prove this theorem is the following: we need to prove
that $l\circ d\in\Coo(U,\R)$ whenever $d\In{U}\D(\Omega)$, i.e.,
by Thm.~\ref{thm:2.3KR}, if $d^{\vee}\in\Coo(U\times\Omega,\R)$
and $d^{\vee}$ is locally of uniformly bounded support. We are going
to prove that: 
\begin{enumerate}[leftmargin={*},label=(\roman*),align=left]
\item \label{enu:limitIdea} For any $u\in U$ the limit 
\begin{equation}
\lim_{h\to0}\frac{d(u+he_{i})-d(u)}{h}\label{eq:limIdeaProof}
\end{equation}
 exists in $T_{D}(\D(\Omega))$, where 
\[
e_{i}=(0,\ptind^{i-1},0,1,0,\ldots,0)\in\R^{n}\supseteq U.
\]
 In fact, this limit is $\left(\frac{\partial d^{\vee}}{\partial e_{i}}\right)^{\wedge}$,
which is again a figure of type $U$ of $\D(\Omega)$.
\item Since $l:T_{D}(\D(\Omega))\ra\R$ is continuous linear, we can apply
\ref{enu:limitIdea} and commute $l$ with the limit and the incremental
ratio to prove that $\frac{\partial}{\partial e_{i}}(l\circ d)$ exists
and is of the form $l\circ p$ with $p\in_{U}\D(\Omega)$. The conclusion
then follows by induction. 
\end{enumerate}
Before proving \ref{enu:limitIdea}, it is indispensable to have the
following:
\begin{lem}
\label{lem:expSpacesHaus}Let $V$ be an open set in $\R^{n}$. Then
the spaces $T_{D}(\Coo(V,\R))$ and $T_{D}(\D(V))$ are Hausdorff.\end{lem}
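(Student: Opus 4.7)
The plan is to exhibit, for each of the two spaces $X \in \{\Coo(V,\R), \D(V)\}$, a family of $\TD$-continuous maps $X \to \R$ which separates the points of $X$. Since every smooth map between diffeological spaces is $\TD$-continuous (the functor $\TD$ from \cite{SYH} acts as the identity on arrows), and $\R$ is Hausdorff, this immediately yields the Hausdorff property for $\TD(X)$: given distinct $f, g \in X$, some functional in the family sends them to distinct real numbers, and the preimages of disjoint open neighbourhoods in $\R$ supply disjoint $D$-open neighbourhoods of $f$ and $g$ in $X$.

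In both cases the natural candidates are the point evaluations $\mathrm{ev}_x : h \mapsto h(x)$ with $x \in V$, which obviously separate points: if $f \neq g$ as elements of either space, choose $x_{0} \in V$ with $f(x_{0}) \neq g(x_{0})$, and then $\mathrm{ev}_{x_{0}}(f) \neq \mathrm{ev}_{x_{0}}(g)$. The smoothness of $\mathrm{ev}_{x} : \Coo(V,\R) \to \R$ follows from Cartesian closedness of $\Diff$, since $\mathrm{ev}_{x}$ factors as the smooth insertion $h \mapsto (h, x)$ followed by the smooth counit $\mathrm{ev} : \Coo(V,\R) \times V \to \R$. Equivalently one checks directly that for any plot $d \In{U} \Coo(V,\R)$ one has $d^{\vee} \in \Coo(U \times V, \R)$, so that $\mathrm{ev}_{x} \circ d = d^{\vee}(-, x) \in \Coo(U, \R)$.

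For $X = \D(V)$ the same plot computation goes through, provided we know that a plot $d \In{U} \D(V)$ has $d^{\vee} \in \Coo(U \times V, \R)$; but this is exactly the content of Thm.~\ref{thm:2.3KR}. Thus $\mathrm{ev}_{x} \in \Coo(\D(V), \R)$ for every $x \in V$, and the separation-of-points argument above gives the Hausdorff property of $\TD(\D(V))$. No genuine obstacle is present: the proof collapses to the observation that pointwise evaluations are smooth functionals which separate points, combined with the general principle that smoothness implies $\TD$-continuity.
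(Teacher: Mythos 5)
Your proposal is correct and follows essentially the same route as the paper: both use the point evaluations $\mathrm{ev}_x$, observe that they are smooth (hence $\TD$-continuous) and separate points, and conclude that the $D$-topology is Hausdorff. The only cosmetic difference is that the paper phrases the last step by saying the functional topology is Hausdorff and invoking Thm.~\ref{thm:functionaltopology} ($\tau_{\F}\subseteq\tau_{X}$), whereas you pull back disjoint neighbourhoods of $\R$ directly, which is the same argument unpacked.
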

\begin{proof}
Note that for any $v\in V$, the evaluation maps $l_{v}:h\in\Coo(V,\R)\mapsto h(v)\in\R$
and $\bar{l}_{v}:h\in\D(V)\mapsto h(v)\in\R$ are smooth, and hence
the maps $T_{D}(l_{v}):T_{D}(\Coo(V,\R))\ra\R$ and $T_{D}(\bar{l}_{v}):T_{D}(\D(V))\ra\R$
are both continuous. Therefore, the functional topology on $\Coo(V,\R)$
and $\D(V)$ are Hausdorff. The conclusion then follows by Thm.~\ref{thm:functionaltopology}.
\end{proof}
We now prove Thm.~\ref{thm:DContImpliesDistr}:
\begin{proof}
To prove the existence of the limit in \ref{enu:limitIdea}, we first
fix $d\In{U}\D(\Omega)$, $u\in U$, $e_{i}=(0,\ptind^{i-1},0,1,0,\ldots,0)\in\R^{n}\supseteq U$
and $r\in\R_{>0}$ such that $B_{r}(u)\subseteq U$. Then there exist
an open neighbourhood $V$ of $u$ in $U$ and $a\in\R_{>0}$ such
that $v+he_{i}\in B_{r}(u)$ for all $v\in V$ and $h\in(-a,a)$.
Set $H:=(-a,a)$, and for any $h\in H$, define 
\[
\delta(h):=\left((v,x)\in V\times\Omega\mapsto\int_{0}^{1}\frac{\partial d^{\vee}}{\partial e_{i}}(v+she_{i},x)\diff{s}\in\R\right).
\]
 Clearly $\delta(0)=\frac{\partial d^{\vee}}{\partial e_{i}}|_{V\times\Omega}$.
Thm.~\ref{thm:2.3KR} implies $d^{\vee}\in\Coo(U\times\Omega,\R)$,
so $\delta^{\vee}\in\Coo(H\times V\times\Omega,\R)$ and hence $\delta\In{H}\Coo(V\times\Omega,\R)=:\R^{V\times\Omega}$.
Also note that for any non-zero $h\in H$ and for any $(v,x)\in V\times\Omega$,
by the fundamental theorem of calculus, we have 
\begin{equation}
\delta^{\vee}(h,v,x)=\frac{d^{\vee}(v+he_{i},x)-d^{\vee}(v,x)}{h}.\label{eq:deltaAsIncrRatio}
\end{equation}

We prove below that $\lim_{h\to0}\delta(h)=\frac{\partial d^{\vee}}{\partial e_{i}}|_{V\times\Omega}$
in the space $\R^{V\times(\Omega)}$ which has the underlying set
\[
\left|\R^{V\times(\Omega)}\right|:=\left\{ \phi\in\R^{V\times\Omega}\mid\phi^{\wedge}\In{V}\D(\Omega)\right\} ,
\]
 and figures defined by $p\In{W}\R^{V\times(\Omega)}$ iff $p^{\vee}:W\times V\times\Omega\ra\R$
is smooth and locally of uniformly bounded support with respect to
$W\times V$.

Since $d^{\vee}$ is locally of uniformly bounded support (Thm.~\ref{thm:2.3KR}),
we may assume that $V$ and $H$ are sufficiently small so that $\delta^{\vee}:H\times V\times\Omega\ra\R$
is of uniformly bounded support with respect to $H\times V$. Thus
\begin{equation}
\delta\In{H}\R^{V\times(\Omega)}.\label{eq:deltaInRVOm}
\end{equation}
 To prove the above mentioned limit equality, let $A$ be a $D$-open
subset of $\R^{V\times(\Omega)}$ such that $\frac{\partial d^{\vee}}{\partial e_{i}}|_{V\times\Omega}\in A$.
From \eqref{eq:deltaInRVOm} we know that $\delta^{-1}(A)=:B$ is
open in $H$. Moreover, $\delta(0)=\frac{\partial d^{\vee}}{\partial e_{i}}|_{V\times\Omega}\in A$
so $0\in B$. This proves that $\lim_{h\to0}\delta(h)=\frac{\partial d^{\vee}}{\partial e_{i}}|_{V\times\Omega}$
in $\R^{V\times(\Omega)}$.

Now, we apply this limit to the adjoint map 
\begin{equation}
(-)^{\wedge}:\phi\in\left|\R^{V\times(\Omega)}\right|\mapsto\phi^{\wedge}\in\left|\D(\Omega)^{V}\right|,\label{eq:adjointMap}
\end{equation}
 where the domain is the diffeological space $\R^{V\times(\Omega)}$,
and the codomain is the space $\D(\Omega)\uparrow V$ with $\left|\D(\Omega)^{V}\right|=|\Coo(V,\D(\Omega))|$
the underlying set and figures defined by $q\In{\tilde{W}}\D(\Omega)\uparrow V$
iff $\left(q^{\vee}\right)^{\vee}:\tilde{W}\times V\times\Omega\ra\R$
is smooth and locally of uniformly bounded support. We claim that
the adjoint map \eqref{eq:adjointMap} is also smooth with respect
to these diffeological structures on its domain and codomain. In fact,
if $p\In{W}\R^{V\times(\Omega)}$, then $\left(\left((-)^{\wedge}\circ p\right)^{\vee}\right)^{\vee}=p^{\vee}$
which is locally of uniformly bounded support by the definition of
the diffeology on $\R^{V\times(\Omega)}$. Therefore $(-)^{\wedge}:\R^{V\times(\Omega)}\ra\D(\Omega)\uparrow V$
is smooth and hence it is also $D$-continuous: 
\[
\left(\frac{\partial d^{\vee}}{\partial e_{i}}|_{V\times\Omega}\right)^{\wedge}=\left(\lim_{h\to0}\delta(h)\right)^{\wedge}=\lim_{h\to0}\delta(h)^{\wedge}\qquad\text{ in }\D(\Omega)\uparrow V.
\]

Now consider the evaluation at $v\in V\subseteq U$: 
\[
\text{ev}_{v}:\phi\in\left|\D(\Omega)\uparrow V\right|=\left|\D(\Omega)^{V}\right|\mapsto\phi(v)\in\left|\D(\Omega)\right|.
\]
We claim that $\text{ev}_{v}:\D(\Omega)\uparrow V\ra\D(\Omega)$ is
smooth. In fact, for any $q\In{\tilde{W}}\D(\Omega)\uparrow V$, i.e.,
\begin{equation}
\left(q^{\vee}\right)^{\vee}:\tilde{W}\times V\times\Omega\ra\R\text{ is smooth and locally of uniformly bounded support}.\label{eq:dAdjAdjIsLUBS}
\end{equation}
 We need to prove that $\left(\text{ev}_{v}\circ q\right)^{\vee}:\tilde{W}\times\Omega\ra\R$
is also smooth and locally of uniformly bounded support. Take $w\in\tilde{W}$,
and from \eqref{eq:dAdjAdjIsLUBS} we have open neighbourhoods $C$
of $w$ and $D$ of $v$ so that $\left(q^{\vee}\right)^{\vee}|_{C\times D\times\Omega}$
is of uniformly bounded support. We may assume that $\supp\left[\left(q^{\vee}\right)^{\vee}(w',v',-)\right]\subseteq K\Subset\Omega$
for all $(w',v')\in C\times D$. But $\left(q^{\vee}\right)^{\vee}(w',v',-)=q(w')(v')=\text{ev}_{v'}(q(w'))$.
Therefore, for all $w'\in C$ we have $\supp\left[\left(\text{ev}_{v}\circ q\right)^{\vee}(w',-)\right]=\supp\left[q(w')(v)\right]\subseteq K$.
By Cartesian closedness, $\text{ev}_{v}\circ q$ is smooth and hence
it is a figure of $\D(\Omega)$. This proves that $\text{ev}_{v}:\D(\Omega)\uparrow V\ra\D(\Omega)$
is smooth and hence it is also $D$-continuous. So we have: 
\begin{align*}
\frac{\partial d^{\vee}}{\partial e_{i}}(v,-) & =\text{ev}_{v}\left[\left(\frac{\partial d^{\vee}}{\partial e_{i}}|_{V\times\Omega}\right)^{\wedge}\right]=\text{ev}_{v}\left[\lim_{h\to0}\delta(h)^{\wedge}\right]\\
 & =\lim_{h\to0}\delta(h)^{\wedge}(v)=\lim_{h\to0}\frac{d^{\vee}(v+he_{i},-)-d^{\vee}(v,-)}{h}\\
 & =\lim_{h\to0}\frac{d(v+he_{i})-d(v)}{h}\qquad\forall v\in V.
\end{align*}
 Therefore, this limit exists in $\D(\Omega)$. By assumption, $l:\left|\D(\Omega)\right|\ra\R$
is $D$-continuous and linear, so 
\begin{align*}
l\left(\frac{\partial d^{\vee}}{\partial e_{i}}(v,-)\right) & =l\left[\lim_{h\to0}\frac{d(v+he_{i})-d(v)}{h}\right]\\
 & =\lim_{h\to0}\frac{l(d(v+he_{i}))-l(d(v))}{h}\\
 & =\frac{\partial\left(l\circ d\right)}{\partial e_{i}}(v).
\end{align*}
 This proves that the first partial derivatives of $l\circ d$ exist
and are continuous because both $l$ and $\frac{\partial d^{\vee}}{\partial e_{i}}$
are $D$-continuous. We can now apply the same procedure to the figure
\[
\left(\frac{\partial d^{\vee}}{\partial e_{i}}\right)^{\wedge}\In{U}\D(\Omega)
\]
 obtaining that also the second partial derivatives of $l\circ d$
exist and are continuous. By applying inductively this process, we
get the conclusion $l\circ d\in\Coo(U,\R)$. Finally, from Thm. \ref{thm:DIsCVS}
we have $l\in\D'(\Omega)$.
\end{proof}
We also have the following
\begin{cor}
\label{cor:equivDistr}Let $l:\left|\D(\Omega)\right|\ra\R$ be a
linear map. Then the following are equivalent: 
\begin{enumerate}[leftmargin={*},label=(\roman*),align=left]
\item \label{enu:equivDistr1}$l$ is continuous in the locally convex
topology, i.e., it is a distribution. 
\item \label{enu:equivDistr2}$l$ is continuous in the $D$-topology on
$\D(\Omega)$. 
\item \label{enu:equivDistr3}$l\in\Coo(\D(\Omega),\R)$
\end{enumerate}
\end{cor}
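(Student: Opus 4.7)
The plan is to close a three-way cycle of implications, each of which is an almost immediate consequence of a result already proved in the paper. Concretely, I would establish \ref{enu:equivDistr1} $\Rightarrow$ \ref{enu:equivDistr2} $\Rightarrow$ \ref{enu:equivDistr3} $\Rightarrow$ \ref{enu:equivDistr1}.

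For \ref{enu:equivDistr1} $\Rightarrow$ \ref{enu:equivDistr2}, I would just invoke Cor.~\ref{cor:D-TopFinerLCTop}: the $D$-topology on $\D(\Omega)$ is finer than the locally convex topology, hence any map continuous with respect to the coarser locally convex topology is a fortiori continuous with respect to the $D$-topology. Here linearity of $l$ is not even used.

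For \ref{enu:equivDistr2} $\Rightarrow$ \ref{enu:equivDistr3}, I would directly apply Thm.~\ref{thm:DContImpliesDistr}, which asserts that every $D$-continuous linear functional $l:T_D(\D(\Omega))\to\R$ belongs to $\Coo(\D(\Omega),\R)\cap\D'(\Omega)$; this is precisely the smoothness conclusion we need (and in fact gives \ref{enu:equivDistr1} as a bonus). This is the substantive step of the corollary, and all the real work has already been done in the proof of Thm.~\ref{thm:DContImpliesDistr}, where existence of the incremental-ratio limit \eqref{eq:limIdeaProof} in $T_D(\D(\Omega))$ was the main obstacle.

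Finally, for \ref{enu:equivDistr3} $\Rightarrow$ \ref{enu:equivDistr1}, I would cite Thm.~\ref{thm:DIsCVS}(ii): any linear $T\in\Coo(\D(\Omega),\R)$ is continuous as a map $\Dlc(\Omega)\to\R$, i.e.\ a distribution. This closes the cycle and completes the proof. No additional lemmas or estimates are required; the corollary is purely a bookkeeping consequence of the three previously established results, and the only conceptual difficulty — showing that $D$-continuous linear functionals are smooth — was already surmounted in Thm.~\ref{thm:DContImpliesDistr}.
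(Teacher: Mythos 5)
Your proposal is correct and follows exactly the paper's own argument: the cycle \ref{enu:equivDistr1} $\Rightarrow$ \ref{enu:equivDistr2} via Cor.~\ref{cor:D-TopFinerLCTop}, \ref{enu:equivDistr2} $\Rightarrow$ \ref{enu:equivDistr3} via Thm.~\ref{thm:DContImpliesDistr}, and \ref{enu:equivDistr3} $\Rightarrow$ \ref{enu:equivDistr1} via Thm.~\ref{thm:DIsCVS}. Nothing further is needed.
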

\begin{proof}
\ref{enu:equivDistr1} $\Rightarrow$ \ref{enu:equivDistr2}: From
Cor.~\ref{cor:D-TopFinerLCTop}; \ref{enu:equivDistr2} $\Rightarrow$
\ref{enu:equivDistr3}: From Thm.~\ref{thm:DContImpliesDistr}; \ref{enu:equivDistr3}
$\Rightarrow$ \ref{enu:equivDistr1}: From Thm.~\ref{thm:DIsCVS}.
\end{proof}
From the proof of Thm. \ref{thm:DContImpliesDistr} we have
\begin{cor}
Let $U$ be an open set in $\R^{n}$ and let $d\In{U}\D(\Omega)$.
Then $d$ is smooth in the usual sense, i.e., for all $\alpha\in\N^{n}$
the partial derivative $\partial^{\alpha}d:U\ra\left|\D(\Omega)\right|$
exists as the limit of a suitable incremental ratio in the topological
vector space $\left|\D(\Omega)\right|$ with the $D$-topology. Moreover,
$\partial^{\alpha}d\In{U}\D(\Omega)$.
\end{cor}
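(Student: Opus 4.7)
The plan is to bootstrap directly from the proof of Theorem~\ref{thm:DContImpliesDistr}, which already established the key limit equality
\[
\lim_{h\to 0}\frac{d(v+he_{i})-d(v)}{h}=\left(\frac{\partial d^{\vee}}{\partial e_{i}}\right)^{\wedge}(v)
\]
in the $D$-topology on $|\D(\Omega)|$ for every $v$ in a neighbourhood $V$ of a given point $u\in U$. This already gives the existence of every first-order partial derivative as a limit of incremental ratios in the topological vector space $(|\D(\Omega)|,\tau_{\D(\Omega)})$, which by Thm.~\ref{thm:DtopAndTVS} is genuinely a TVS so the limit is uniquely defined. Thus the base case $|\alpha|=1$ of the statement is immediate.

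Next, I would verify that $\partial d/\partial e_{i}\In{U}\D(\Omega)$, so that the argument can be iterated. By Thm.~\ref{thm:2.3KR} (reproduced in the excerpt) I need to check that the map
\[
\left(\tfrac{\partial d^{\vee}}{\partial e_{i}}\right)^{\wedge\vee}=\tfrac{\partial d^{\vee}}{\partial e_{i}}:U\times\Omega\longrightarrow\R
\]
is both smooth and locally of uniformly bounded support. Smoothness is immediate from $d^{\vee}\in\Coo(U\times\Omega,\R)$, which itself holds because $d\In{U}\D(\Omega)$ by Thm.~\ref{thm:2.3KR}. For the support condition, if on an open neighbourhood $W\times\Omega$ of a point we have $\supp d^{\vee}(w,-)\subseteq K$ uniformly in $w\in W$, then differentiation in the $U$-direction does not enlarge the support in $\Omega$, so $\supp\bigl(\partial d^{\vee}/\partial e_{i}\bigr)(w,-)\subseteq K$ still holds uniformly on $W$. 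This gives $\partial d/\partial e_{i}\In{U}\D(\Omega)$.

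Finally, I would conclude by induction on $|\alpha|$. Having established that first-order partial derivatives exist in the TVS sense and remain figures of $\D(\Omega)$, the same argument can be applied to $\partial d/\partial e_{i}$ in place of $d$, producing all higher partial derivatives $\partial^{\alpha}d$ as iterated incremental-ratio limits in the $D$-topology, each of them again a plot of $\D(\Omega)$. I expect no serious obstacle: the main technical content (the existence of the limit in the $D$-topology, the smoothness of the adjoint map to $\D(\Omega)\uparrow V$, and the smoothness of evaluation $\text{ev}_{v}$) has already been done inside the proof of Thm.~\ref{thm:DContImpliesDistr}; the only small point to check carefully is that the uniform-support property is preserved under differentiation in the parameter, which is essentially trivial from the inclusion of supports.
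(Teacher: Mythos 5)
Your proposal is correct and takes essentially the same route as the paper, which obtains the corollary directly from the proof of Thm.~\ref{thm:DContImpliesDistr}: that proof establishes exactly the limit equality you quote, identifies the limit with $\left(\frac{\partial d^{\vee}}{\partial e_{i}}\right)^{\wedge}$, notes it is again a figure of $\D(\Omega)$ (your verification via Thm.~\ref{thm:2.3KR}, including that the locally-uniformly-bounded-support condition is preserved under differentiation in the parameter, makes explicit what the paper leaves implicit), and then iterates inductively on $|\alpha|$. One small correction: uniqueness of the limit follows from the Hausdorffness of $\TD(\D(\Omega))$ (Lem.~\ref{lem:expSpacesHaus}), not merely from the topological vector space structure of Thm.~\ref{thm:DtopAndTVS}, since a TVS need not be Hausdorff.
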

By applying this result to a curve $d\In{\R}\D(\Omega)$, and knowing
that the $D$-topology is finer than the usual locally convex topology,
we get an independent proof that $\D(\Omega)$ is a CVS.

We close this section with the following result, which underscores
the difference between $\D(\Omega)$ and its counterpart $\Dsmooth(\Omega)$;
in its statement, if $F\in\Diff$ is also a vector space, then we
set 
\[
F'_{\text{{\rm s}}}:=\left(\left\{ l\in\Coo(F,\R)\mid l\text{ is linear}\right\} \prec\Coo(F,\R)\right)
\]
 for its smooth dual space (this notation has been used for the special
cases in Thm.~\ref{thm:DtopAndTVS}).
\begin{cor}
\label{cor:DandDsmooth}\ 
\begin{enumerate}[leftmargin={*},label=(\roman*),align=left]
\item \label{enu:D'eqD'smooth}$\left|\D'(\Omega)\right|=\left|\D(\Omega)'_{\text{{\rm s}}}\right|$
and $\D'(\Omega)\supseteq\D(\Omega)'_{\text{{\rm s}}}$. 
\item \label{enu:D'notEqDsmooth'smooth}$\left|\D'(\Omega)\right|\supseteq\left|\Dsmooth(\Omega)'_{\text{{\rm s}}}\right|=\left\{ l\in\Coo(\Dsmooth(\Omega),\R)\mid l\text{ is linear}\right\} $. 
\end{enumerate}
\end{cor}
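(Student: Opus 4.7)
The plan is to derive both parts essentially from Corollary~\ref{cor:equivDistr} together with the inclusion of diffeologies $\D(\Omega)\subseteq\Dsmooth(\Omega)$ already noted after Thm.~\ref{thm:2.3KR}. Almost nothing new has to be proved; the issue is only to translate these ingredients into statements about underlying sets and about the diffeology on the dual space.

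For part~\ref{enu:D'eqD'smooth}, the underlying set of $\D(\Omega)'_{\text{s}}$ is, by definition, $\{l\in\Coo(\D(\Omega),\R)\mid l\text{ linear}\}$, and by the equivalence \ref{enu:equivDistr1}$\Leftrightarrow$\ref{enu:equivDistr3} of Cor.~\ref{cor:equivDistr} this is precisely $|\D'(\Omega)|$. For the diffeological inclusion $\D(\Omega)'_{\text{s}}\subseteq\D'(\Omega)$ (which is the meaning of $\D'(\Omega)\supseteq\D(\Omega)'_{\text{s}}$ via~(ii) of Rem.~\ref{rem:propDiffSpaces}), I would take $d\In{U}\D(\Omega)'_{\text{s}}$. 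By Thm.~\ref{thm:initial} and the definition of the sub- and functional diffeologies, this amounts to $d^{\vee}\in\Coo(U\times\D(\Omega),\R)$. To verify that $d$ is also a plot of $\D'(\Omega)$ with its canonical diffeology (Def.~\ref{def:canonicalDiffeolTVS}), I must check that $\lambda\circ d\in\Coo(U,\R)$ for every continuous linear $\lambda:\D'(\Omega)\ra\R$ in the weak${}^{*}$ topology. Since the weak${}^{*}$ dual of $\D'(\Omega)$ is exactly $\{\text{ev}_{\phi}\mid\phi\in\D(\Omega)\}$, we have $\lambda\circ d=d^{\vee}(-,\phi)$ for some $\phi$, and this is smooth because $d^{\vee}$ is jointly smooth. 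This gives the desired inclusion of diffeologies.

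For part~\ref{enu:D'notEqDsmooth'smooth}, the equality $|\Dsmooth(\Omega)'_{\text{s}}|=\{l\in\Coo(\Dsmooth(\Omega),\R)\mid l\text{ linear}\}$ is merely the definition of the smooth dual. For the inclusion $|\Dsmooth(\Omega)'_{\text{s}}|\subseteq|\D'(\Omega)|$, I would use that $\D(\Omega)\subseteq\Dsmooth(\Omega)$ (noted after Thm.~\ref{thm:2.3KR}), so the identity set map $\D(\Omega)\ra\Dsmooth(\Omega)$ is smooth. Hence, for any linear $l\in\Coo(\Dsmooth(\Omega),\R)$, the composition $l:\D(\Omega)\ra\R$ is also smooth and linear, and Cor.~\ref{cor:equivDistr} then yields $l\in\D'(\Omega)$.

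The only nontrivial input is the identification of the weak${}^{*}$-continuous dual of $\D'(\Omega)$ with $\D(\Omega)$, which is a classical fact in locally convex space theory and not an obstacle; everything else is bookkeeping against Cor.~\ref{cor:equivDistr} and the elementary diffeological inclusion $\D(\Omega)\subseteq\Dsmooth(\Omega)$.
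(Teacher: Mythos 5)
Your proposal is correct and follows essentially the same route as the paper: the set equality in (i) and the whole of (ii) come from Cor.~\ref{cor:equivDistr} together with $\D(\Omega)\subseteq\Dsmooth(\Omega)$, and the diffeological inclusion $\D'(\Omega)\supseteq\D(\Omega)'_{\text{s}}$ is obtained exactly as in the paper, by testing a plot $d$ with $d^{\vee}$ smooth against the weak$^{*}$-continuous functionals on $\D'(\Omega)$, all of which are evaluations $\text{ev}_{\phi}$, so that $l\circ d=d^{\vee}(-,\phi)$ is smooth.
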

\begin{proof}
\ref{enu:D'eqD'smooth}: We first prove that the underlying sets are
equal, i.e., $\left|\D'(\Omega)\right|=\left|\D(\Omega)'_{\text{{\rm s}}}\right|$.
In fact, this follows from the equivalence \ref{enu:equivDistr1} $\iff$
\ref{enu:equivDistr3} of Cor.~\ref{cor:equivDistr}. Now, if $d\In{U}\D(\Omega)'_{\text{{\rm s}}}$,
then $d^{\vee}:U\times\D(\Omega)\ra\R$ is smooth. The space $\D'(\Omega)$
is functionally generated by all linear functionals $l:\left|\D'(\Omega)\right|\ra\R$
which are continuous with respect to the weak$^{*}$ topology. Since
each one of these functionals is of the form $l=\text{ev}_{\phi}$
for some $\phi\in\D(\Omega)$, we only need to consider $\left(\text{ev}_{\phi}\circ d\right)(u)=\text{ev}_{\phi}\left[d(u)\right]=d(u)(\phi)=d^{\vee}(u,\phi)$
for each $u\in U$. Therefore, $l\circ d=\text{ev}_{\phi}\circ d=d^{\vee}(-,\phi)$
is smooth, which implies that $d\In{U}\D'(\Omega)$.\\
 \ref{enu:D'notEqDsmooth'smooth}: As a consequence of Thm.~\ref{thm:2.3KR},
we know that $\D(\Omega)\subseteq\Dsmooth(\Omega)$. Therefore, if
$l\in\Coo(\Dsmooth(\Omega),\R)$ is linear, then we also have $l\in\Coo(\D(\Omega),\R)$.
Now $l\in\left|\D'(\Omega)\right|$ follows from Cor. \ref{cor:equivDistr}.
\end{proof}

\section{Spaces for Colombeau generalized functions as diffeological spaces}

It is natural to view all the spaces used to define CGF as diffeological
spaces. We will start with $\Coo(\Omega)^{I}$, $\ems(\Omega)$, $\mathcal{A}_{q}(\Omega)$,
$U(\Omega)$, $\mathcal{E}^{e}(\Omega)$ and $\emf(\Omega)$, with
the aim to prove that also the quotient spaces $\gs(\Omega)$, $\gf(\Omega)$
are smooth differential algebras.

\subsection*{The space $\Coo(\Omega)^{I}$}

Elements $(u_{\eps})$ of $\Coo(\Omega)^{I}$ are arbitrary nets,
indexed in $\eps\in I$, of smooth functions on $\Omega$. There are
studies of Colombeau-like algebras with smooth or continuous $\eps$-dependence
(see \cite{BuKu12,GiKu13} and references therein). In \cite{GiKu14}
it has been proved that a very large class of equations have no solution
if we request continuous dependence with respect to $\eps\in I$.
For this reason, it is natural to think $I$ as a space with the discrete
diffeology (see \ref{enu:discreteDiff} of Rem.~\ref{rem:propDiffSpaces}),
i.e., where only locally constant maps $d:U\ra I$ are figures
$d\In{U}I$. With this structure, the space $I$ is functionally generated
by $\Set(I,\R)$. If we think of $\Coo(\Omega)$ as the space $\Coo(\Omega,\R)\in\Diff$,
then by Cartesian closedness (Thm.~\ref{thm:FgenIsCartClosed}) we
have $u\in\left|\Coo(\Omega)^{I}\right|$, i.e., $u\in\Coo(I,\Coo(\Omega,\R))$,
iff $u\in\Set(I,\Coo(\Omega))$. The space $\Coo(\Omega)^{I}$ with
this diffeological structure will be denoted by $\Coo(\Omega,\R)^{I}$.
Figures $d\In{U}\Coo(\Omega,\R)^{I}$ are maps $d:U\ra\Set(I,\Coo(\Omega))$
such that $\left(d^{\vee}\right)^{\vee}(-,\eps,-)\in\Coo(U\times\Omega,\R)$
for all $\eps\in I$.

\subsection*{The space $\ems(\Omega)$}

The natural diffeology on 
\begin{multline*}
\ems(\Omega)=\{(u_{\eps})\in\Coo(\Omega)^{I}\mid\forall K\Subset\Omega\,\forall\alpha\in\N^{n}\,\exists N\in\N:\\
\sup_{x\in K}|\partial^{\alpha}u_{\eps}(x)|=O(\eps^{-N})\}
\end{multline*}
 is the sub-diffeology of $\Coo(\Omega,\R)^{I}$: 
\[
\Ems(\Omega):=\left(\ems(\Omega)\prec\Coo(\Omega,\R)^{I}\right).
\]
Its figures $d\In{U}\Ems(\Omega)$ are maps $d:U\ra\ems(\Omega)$
such that $\left(d^{\vee}\right)^{\vee}(-,\eps,-)\in\Coo(U\times\Omega,\R)$
for all $\eps\in I$.

\subsection*{The space $\mathcal{A}_{q}(\Omega)$}

The set $\mathcal{A}_{0}(\Omega)=\left\{ \phi\in\left|\D(\Omega)\right|\mid\int\phi=1\right\} $
has a natural diffeology, the sub-diffeology of $\D(\Omega)$. So
\[
\boldsymbol{\mathcal{A}}_{0}(\Omega):=\left(\mathcal{A}_{0}(\Omega)\prec\D(\Omega)\right)\in\Diff.
\]
Analogously, the set 
\[
\mathcal{A}_{q}(\Omega)=\left\{ \phi\in\mathcal{A}_{0}(\Omega)\mid\forall\alpha\in\N^{n}:\ 1\le|\alpha|\le q\Rightarrow\int x^{\alpha}\phi(x)\diff{x}=0\right\} 
\]
has a natural diffeology, the sub-diffeology of $\boldsymbol{\mathcal{A}}_{0}(\Omega)$
So 
\[
\boldsymbol{\mathcal{A}}_{q}(\Omega):=\left(\mathcal{A}_{q}(\Omega)\prec\boldsymbol{\mathcal{A}}_{0}(\Omega)\right)=\left(\mathcal{A}_{q}(\Omega)\prec\D(\Omega)\right)\in\Diff,
\]
where we used the property $\left(S\prec\left(T\prec X\right)\right)=\left(S\prec X\right)$
if $S\subseteq T\subseteq|X|$ and $X\in\Diff$. Therefore, figures
$d\In{U}\boldsymbol{\mathcal{A}}_{q}(\Omega)$ are maps $d:U\ra\mathcal{A}_{q}(\Omega)$
such that $d^{\vee}\in\Coo(U\times\Omega,\R)$ and $d^{\vee}$ is
locally of uniformly bounded support (Thm.~\ref{thm:2.3KR}).

Note that $\int_{\Omega}:\D(\Omega)\ra\R$ is a linear and diffeologically
smooth map. Therefore, $\mathcal{A}_{0}(\Omega)$ is an affine space
which is closed in the locally convex topology of $\D(\Omega)$. An
isomorphism with the corresponding vector space $\mathcal{A}_{00}(\Omega):=\ker\left(\int_{\Omega}\right)$
is given by $\phi\in\mathcal{A}_{0}(\Omega)\mapsto\phi-\phi_{0}\in\mathcal{A}_{00}(\Omega)$,
where $\phi_{0}\in\mathcal{A}_{0}(\Omega)$ is any fixed element.
This isomorphism is clearly diffeologically smooth. This solves the
problem stated in \ref{enu:A_0Affine} of Rem. \ref{rem:catFrameworksGF}.

\subsection*{The space $U(\Omega)$}

In Def.\ \ref{def:fullCA} of the full Colombeau algebra, the set
$U(\Omega)$ serves as domain of the representatives $R:U(\Omega)\ra\R$
of CGF in $\gf(\Omega)$. These representatives are requested to be
smooth in the $\Omega$ slot (note that $U(\Omega)\subseteq\mathcal{A}_{0}\times\Omega)$
but with no particular regularity in the $\mathcal{A}_{0}$ slot (which
serves as an index set for the full Colombeau algebra, analogous to
the interval $I$ as an index set for the special one). This means
that we shall consider the discrete diffeology on $\mathcal{A}_{0}$
and the standard diffeology on $\Omega$. If we identify the set $\mathcal{A}_{0}$
with the corresponding diffeological space with the discrete diffeology,
then 
\begin{align*}
\boldsymbol{U}(\Omega): & =\left(\left\{ (\phi,x)\in\mathcal{A}_{0}\times\Omega\mid\text{supp}(\phi)\subseteq\Omega-x\right\} \prec\mathcal{A}_{0}\times\Omega\right)\\
 & =\left(U(\Omega)\prec\left|\D(\R^{n})\right|\times\R^{n}\right)\in\Diff,
\end{align*}
where we used the property $(A\prec D)\times(O\prec R)=(A\times O\prec D\times R)$
which holds in $\Diff$. Therefore, figures $d\In{V}\boldsymbol{U}(\Omega)$
are maps $d:V\ra U(\Omega)$ such that the two projections verify
$d_{1}\in\Set(V,\left|\D(\R^{n})\right|)$ and $d_{2}\in\Coo(V,\Omega)$.

\subsection*{The space $\mathcal{E}^{e}(\Omega)$}

The space $\mathcal{E}^{e}(\Omega)$ (see Def. \ref{def:fullCA})
inherits its diffeological structure from $\Coo(\boldsymbol{U}(\Omega),\R)\in\Diff$:
\[
\boldsymbol{\mathcal{E}}^{e}(\Omega):=\left(\mathcal{E}^{e}(\Omega)\prec\Coo(\boldsymbol{U}(\Omega),\R)\right).
\]
Figures $d\In{V}\boldsymbol{\mathcal{E}}^{e}(\Omega)$ are maps $d:V\ra\mathcal{E}^{e}(\Omega)$
such that $d^{\vee}\in\Coo(V\times\boldsymbol{U}(\Omega),\R)$. We
give an equivalent characterization of $\boldsymbol{\mathcal{E}}^{e}(\Omega)$
as follows: For $\phi\in\mathcal{A}_{0}$, set 
\[
\Omega_{\phi}:=\Omega\cap\left\{ x\in\R^{n}\mid\text{supp}(\phi)\subseteq\Omega-x\right\} .
\]
As a convention, when $\Omega_{\phi}=\emptyset$, we think of $\Coo(\Omega_{\phi},\R)$
as a set with a single element. Since $R\in\mathcal{E}^{e}(\Omega)$
iff $R^{\wedge}:\mathcal{A}_{0}\ra\bigcup_{\phi\in\mathcal{A}_{0}}\Coo(\Omega_{\phi},\R)$
and $R(\phi,-)\in\Coo(\Omega_{\phi},\R)$ for all $\phi\in\mathcal{A}_{0}$,
$R^{\wedge}\in\prod_{\phi\in\mathcal{A}_{0}}\Coo(\Omega_{\phi},\R)$.
By Cartesian closedness of $\Diff$: 
\[
\boldsymbol{\mathcal{E}}^{e}(\Omega)\simeq\prod_{\phi\in\mathcal{A}_{0}}\Coo(\Omega_{\phi},\R).
\]
 Therefore, up to smooth isomorphism, figures of $\boldsymbol{\mathcal{E}}^{e}(\Omega)$
can be described as maps $d:V\ra\prod_{\phi\in\mathcal{A}_{0}}\Coo(\Omega_{\phi},\R)$
such that $d(-)(\phi)^{\vee}\in\Coo(V\times\Omega_{\phi},\R)$ for
all $\phi\in\mathcal{A}_{0}$.

\subsection*{The space $\emf(\Omega)$}

The natural diffeology on the space of moderate functions $\emf(\Omega)$
is the sub-diffeology of $\boldsymbol{\mathcal{E}}^{e}(\Omega)$.
Hence, 
\[
\Emf(\Omega):=\left(\emf(\Omega)\prec\boldsymbol{\mathcal{E}}^{e}(\Omega)\right)\in\Diff.
\]
Figures $d\In{V}\Emf(\Omega)$ are maps $d:V\ra\emf(\Omega)$ such
that $d(-)(\phi,-)\in\Coo(V\times\Omega_{\phi},\R)$ for all $\phi\in\mathcal{A}_{0}$.

\subsection*{The special and full Colombeau algebras}

Since the category $\Diff$ of diffeological spaces is cocomplete,
both quotient algebras $\gs(\Omega)$ and $\gf(\Omega)$ can be viewed
as objects of $\Diff$: 
\begin{align*}
\Gs(\Omega): & =\Ems(\Omega)/\ns(\Omega)\\
\Gf(\Omega): & =\Emf(\Omega)/\nf(\Omega).
\end{align*}
Figures of these spaces can be described using the notion of quotient
diffeology. E.g.\ $d\In{U}\Gs(\Omega)$ iff $d:U\ra\gs(\Omega)$
and for any $u\in U$ we can find an open neighbourhood $V$ of $u$
in $U$ and a map $\delta:V\ra\Coo(\Omega)^{I}$ such that
\begin{enumerate}[leftmargin={*},label=(\roman*),align=left]
\item $\delta(v)$ is moderate for all $v\in V$ 
\item $\left(\delta^{\vee}\right)^{\vee}(-,\eps,-)\in\Coo(V\times\Omega,\R)$
for all $\eps\in I$ 
\item $d|_{V}=\pi\circ\delta$, where $\pi:(u_{\eps})\in\ems(\Omega)\mapsto[u_{\eps}]\in\gs(\Omega)$
is the projection on the quotient. 
\end{enumerate}
Analogously, we can describe figures of the full Colombeau algebra.

\bigskip{}

We can now state the following natural result:
\begin{thm}
\label{thm:CGASmooth}Both for the special and the full Colombeau
algebras $\mathcal{G}(\Omega)\in\left\{ \gs(\Omega),\gf(\Omega)\right\} $,
the sum, product and derivation maps 
\begin{align*}
+:\mathcal{G}(\Omega)\times\mathcal{G}(\Omega) & \ra\mathcal{G}(\Omega)\\
\cdot:\mathcal{G}(\Omega)\times\mathcal{G}(\Omega) & \ra\mathcal{G}(\Omega)\\
\partial^{\alpha}:\mathcal{G}(\Omega) & \ra\mathcal{G}(\Omega)\qquad\forall\alpha\in\N^{n}
\end{align*}
are smooth. Therefore, with respect to the $D$-topology, $\mathcal{G}(\Omega)$
is a topological algebra with continuous derivations.

Moreover, if $\left(\psi_{\eps}\right)\in\D(\Omega)^{I}$ is a net
verifying properties \ref{enu:emb_supp}, \ref{enu:emb_int1}, \ref{enu:emb_moderate},
\ref{enu:emb_moments}, \ref{enu:emb_intAbs} of Thm.~\ref{thm:magicMollifier},
and let $\iota_{\Omega}$ be defined as in \eqref{eq:embeddingSpecial}
and let $\sigma_{\Omega}(f):=[f]\in\mathcal{G}(\Omega)$ for all $f\in\Coo(\Omega)$,
then the embeddings 
\begin{align*}
\iota_{\Omega}:\left|\D'(\Omega)\right| & \ra\mathcal{G}(\Omega)\\
\sigma_{\Omega}:\Coo(\Omega) & \ra\mathcal{G}(\Omega)
\end{align*}
are smooth maps if we equip $\left|\D'(\Omega)\right|$ with the sub-diffeology
of $\Coo(\D(\Omega),\R)$.\end{thm}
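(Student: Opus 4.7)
The plan is to reduce every smoothness claim to a statement about representatives and then transport it across the quotient, using the explicit description of plots of $\Gs(\Omega)$ and $\Gf(\Omega)$ recorded just before the theorem. For the algebra operations on $\Gs(\Omega)$, given $(d_{1},d_{2})\In{U}\Gs(\Omega)\times\Gs(\Omega)$ and $u_{0}\in U$, one locally lifts to moderate nets $\delta_{i}\colon V\ra\Coo(\Omega)^{I}$. The pointwise sum, pointwise product and pointwise partial derivative of these nets are again moderate by standard Colombeau estimates (cf.~\cite{GKOS}), and smoothness of each slice $(\delta_{1}+\delta_{2})^{\vee\vee}(\cdot,\eps,\cdot)$, etc., in $(V\times\Omega\ra\R)$ is immediate from the corresponding smoothness of the slices of $\delta_{1}$ and $\delta_{2}$. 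Hence $\delta_{1}+\delta_{2}$, $\delta_{1}\cdot\delta_{2}$ and $\partial^{\alpha}\delta_{1}$ provide valid local lifts for the three operations applied to $(d_{1},d_{2})$, yielding the claimed smoothness. For $\Gf(\Omega)$ the same argument applies using the isomorphism $\boldsymbol{\mathcal{E}}^{e}(\Omega)\simeq\prod_{\phi\in\mathcal{A}_{0}}\Coo(\Omega_{\phi},\R)$: sum, product and $\partial^{\alpha}$ all act componentwise in $\phi$ and preserve moderateness. Continuity of these operations in the $D$-topology is then automatic upon applying the functor $\TD\colon\Diff\ra\Top$.

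For $\sigma_{\Omega}$: given $d\In{U}\Coo(\Omega,\R)$, the $\eps$-constant lift $(u,\eps,x)\mapsto d^{\vee}(u,x)$ (respectively, the $\phi$-independent lift in the full case) is manifestly smooth and moderate and projects onto $\sigma_{\Omega}\circ d$. For $\iota_{\Omega}$ into $\Gs(\Omega)$, let $d\In{U}(|\D'(\Omega)|\prec\Coo(\D(\Omega),\R))$, so $d^{\vee}\colon U\times\D(\Omega)\ra\R$ is smooth and linear in its second argument. Smoothness of $\iota_{\Omega}\circ d$ is a local property, so fix $u_{0}\in U$ and a relatively compact open $V_{0}\Subset\Omega$, then choose $\eps_{0}>0$ so small that $M_{\eps}(x):=(\eps\odot\psi_{\eps})(x-\cdot)|_{\Omega}$ has support contained in $\Omega$ for every $x\in V_{0}$ and $\eps\le\eps_{0}$. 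By Thm.~\ref{thm:2.3KR}, for each such $\eps$ the map $M_{\eps}\colon V_{0}\ra\D(\Omega)$ is a figure of $\D(\Omega)$. Define $\delta(u,\eps)(x):=d^{\vee}(u,M_{\eps}(x))$ for $\eps\le\eps_{0}$, and extend smoothly (say, by zero) for $\eps>\eps_{0}$; this modification is harmless modulo $\ns(\Omega)$. Smoothness of $\delta^{\vee\vee}$ on $U\times V_{0}$ for each $\eps\le\eps_{0}$ is composition of smooth maps, and the moderateness estimates are exactly Thm.~\ref{thm:magicMollifier}\ref{enu:emb_moderate} combined with linearity of $d^{\vee}$ in its $\D(\Omega)$-slot. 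This supplies the local lift demanded by the quotient diffeology. The full-algebra case is entirely parallel, with the family $M_{\eps}$ replaced by the $\boldsymbol{U}(\Omega)$-parametrised translation $(\phi,x)\mapsto\phi(\cdot-x)$, which is smooth on each fibre $\{\phi\}\times\Omega_{\phi}$.

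The main technical obstacle is bookkeeping the support condition in $\iota_{\Omega}$: one must verify that $M_{\eps}(x)$ genuinely lies in $\D(\Omega)$ (not merely in $\Coo(\Omega)$) and that $M_{\eps}$ is a $\D(\Omega)$-figure rather than a $\Coo(\Omega)$-figure, which forces the local restriction to a relatively compact $V_{0}$ with $\eps\le\eps_{0}$. Once this is arranged, the rest of the argument is a direct translation of standard Colombeau computations into the diffeological language via the local-lift characterisation of the quotient diffeology and the properties collected in Thm.~\ref{thm:magicMollifier}.
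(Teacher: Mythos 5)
Your overall strategy is the same as the paper's: the algebra operations and $\sigma_{\Omega}$ are handled by exactly the same local-lift argument through the quotient diffeology, and for $\iota_{\Omega}$ both you and the paper reduce to composing $d^{\vee}\in\Coo(U\times\D(\Omega),\R)$ with the smooth family of scaled, translated mollifiers and to Thm.~\ref{thm:2.3KR}; the paper merely packages that last step as Lem.~\ref{lem:S-T-smooth} (smoothness of $S(\eps,\phi)=\eps\odot\phi$ and $\widetilde{T}(x,\phi)=\phi(x-?)$), and, like you, quotes the literature for moderateness.

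The one step that does not work as written is your support bookkeeping for $\iota_{\Omega}$. A plot $d\In{U}\Gs(\Omega)$ must admit, locally \emph{in the $U$-variable}, a lift $\delta:V\ra\ems(\Omega)$, i.e.\ a net of smooth functions defined on \emph{all} of $\Omega$; the locality built into the quotient diffeology is in $U$ only, not in the $\Omega$-variable. Your $\delta(u,\eps)(x):=d^{\vee}(u,M_{\eps}(x))$ is defined only for $x$ in the relatively compact $V_{0}\Subset\Omega$ (that is where $M_{\eps}(x)\in\D(\Omega)$), so it is not a lift into $\Ems(\Omega)$, and the modification for $\eps>\eps_{0}$ (which is indeed harmless modulo $\ns(\Omega)$) does not repair the $x$-domain. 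What is needed is a representative of $\iota_{\Omega}(d(u))$ global in $x\in\Omega$: either one uses the formula $\langle d(u),\frac{1}{\eps^{n}}\psi_{\eps}\left(\frac{x-?}{\eps}\right)\rangle$ for all $x\in\Omega$, as the paper does (this presupposes, exactly as the definition \eqref{eq:embeddingSpecial} itself does, that the pairing is meaningful on all of $\Omega$), or, on a general $\Omega$, the cutoff-corrected embedding; in either case your smoothness computation then applies verbatim to that global representative. Two minor points: moderateness of the lift uses the finite-order seminorm estimate for the distribution $d(u)$ together with property \ref{enu:emb_moderate} of Thm.~\ref{thm:magicMollifier}, not just linearity of $d^{\vee}$ in its second slot; and the topological-algebra statement obtained by applying $\TD$ concerns the $D$-topology of the product diffeology, which in general is finer than the product of the $D$-topologies (the paper asserts this at the same level of detail).
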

\begin{proof}
We prove that the maps are smooth for the case $\mathcal{G}(\Omega)=\gs(\Omega)$,
since the proof is similar for the case $\mathcal{G}(\Omega)=\gf(\Omega)$.

Concerning the smoothness of the sum map, let $d\In{U}\gs(\Omega)\times\gs(\Omega)$,
i.e., $p_{i}\circ d\In{U}\gs(\Omega)$, where $p_{i}:\gs(\Omega)\times\gs(\Omega)\ra\gs(\Omega)$,
$i=1,2$, are the projections. Hence, by the definition of the quotient
diffeology on $\gs(\Omega)$, for any $u\in U$ we can write $\left(p_{i}\circ d\right)|_{V_{i}}=\pi\circ\delta_{i}$,
where $V_{i}\in\tau_{U}$, $u\in V_{1}\cap V_{2}$, $\delta_{i}\In{V_{i}}\Ems(\Omega)$.
Thus, we can write the composition 
\[
\left(+\circ d\right)|_{V_{1}\cap V_{2}}:v\mapsto\pi\left[\delta_{1}(v)\right]+\pi\left[\delta_{2}(v)\right]=\pi\left[\delta_{1}(v)+\delta_{2}(v)\right].
\]
 Since $\left(\delta_{1}+\delta_{2}\right)|_{V_{1}\cap V_{2}}\In{V_{1}\cap V_{2}}\Ems(\Omega)$,
the conclusion follows from the definition of the quotient diffeology.

Analogously, we can prove that the product map is smooth.

Concerning the smoothness of the partial derivative $\partial^{\alpha}$,
if $d\In{U}\gs(\Omega)$, then for any $u\in U$ we can write $d|_{V}=\pi\circ\delta$,
where $u\in V\in\tau_{U}$ and $\delta\In{V}\Ems(\Omega)$. Therefore,
we have 
\[
\left(\partial^{\alpha}\circ d\right)|_{V}:v\mapsto\partial^{\alpha}\left(d(v)\right)=\partial^{\alpha}\left(\pi(\delta(v))\right)=\pi\left[\partial^{\alpha}\delta(v)\right].
\]
But it is not difficult to show that $\partial^{\alpha}\in\Coo(\Ems(\Omega),\Ems(\Omega))$.
Hence, $\partial^{\alpha}\delta\In{V}\Ems(\Omega)$, and the conclusion
follows.

Concerning the smoothness of the embeddings, we only need to prove
that $\iota_{\Omega}$ is smooth, since the smoothness of $\sigma_{\Omega}$
follows directly from the definition of figures of a quotient diffeology.
Let $d\In{U}\left(\left|\D'(\Omega)\right|\prec\Coo(\D(\Omega),\R)\right)$,
i.e., $d^{\vee}\in\Coo(U\times\D(\Omega),\R)$. We can compute that
\begin{align*}
\left(\iota_{\Omega}\circ d\right)(u) & =\left[d(u)*(\eps\odot\psi_{\eps}|_{\Omega})\right]=\\
 & =\left[x\in\Omega\mapsto\langle d(u),\frac{1}{\eps^{n}}\psi_{\eps}\left(\frac{x-?}{\eps}\right)\rangle\right].
\end{align*}
For any fixed $\eps\in I$, we show below that the map 
\[
\delta_{\eps}:(u,x)\in U\times\Omega\mapsto\langle d(u),\frac{1}{\eps^{n}}\psi_{\eps}\left(\frac{x-?}{\eps}\right)\rangle=d^{\vee}\left[u,\frac{1}{\eps^{n}}\psi_{\eps}\left(\frac{x-?}{\eps}\right)\right]\in\R
\]
is smooth (for the moderateness property, see \cite{GKOS,SteVic09}).
Define the maps 
\begin{align}
S:(\eps,\phi)\in I\times\D(\R^{n}) & \mapsto\eps\odot\phi\in\D(\R^{n})\label{eq:S-Ttilde}\\
\widetilde{T}:(x,\phi)\in\R^{n}\times\D(\R^{n}) & \mapsto\phi(x-?)\in\D(\R^{n}).\label{eq:Ttilde}
\end{align}
Then 
\[
\delta_{\eps}(u,x)=d^{\vee}\left[u,S\left(\eps,\widetilde{T}(x,\psi_{\eps})\right)\right]\qquad\forall(u,x)\in U\times\Omega.
\]
Therefore, $\delta_{\eps}$ is smooth once we prove that both maps
$S$ and $\widetilde{T}$ are diffeologically smooth. This is done
in the following lemma:
\begin{lem}
\label{lem:S-T-smooth}The maps defined in \eqref{eq:S-Ttilde}, \eqref{eq:Ttilde}
and 
\begin{align*}
T:(x,\phi)\in\R^{n}\times\D(\R^{n}) & \mapsto\phi(?-x)\in\D(\R^{n})
\end{align*}
 are diffeologically smooth, i.e., $S\in\Coo(I\times\D(\R^{n}),\D(\R^{n}))$,
$\widetilde{T}\in\Coo(\R^{n}\times\D(\R^{n}),\D(\R^{n}))$, $T\in\Coo(\R^{n}\times\D(\R^{n}),\D(\R^{n}))$.\end{lem}
\begin{proof}
We only proceed for $S$, since the other two cases are similar. If
$\eps\in I$ and $p\In{U}\D(\R^{n})$, then $p^{\vee}\in\Coo(U\times\R^{n},\R)$
and $p^{\vee}$ is locally of uniformly bounded support with respect
to $U$ (Thm.~\ref{thm:2.3KR}). But $\left[S(\eps,-)\circ p\right]^{\vee}(u,x)=\frac{1}{\eps^{n}}p^{\vee}\left(u,\frac{x}{\eps}\right)$
for all $(u,x)\in U\times\Omega$, and this shows that $\left[S(\eps,-)\circ p\right]^{\vee}\in\Coo(U\times\R^{n},\R)$
and it is locally of uniformly bounded support with respect to $U$.
\end{proof}
Since $I$ has the discrete diffeology, all these $\delta_{\eps}$'s
together induce a smooth map $\delta:U\ra\Ems(\Omega)$ such that
$\pi\circ\delta=\iota_{\Omega}\circ d$. By the definition of the
quotient diffeology on $\gs(\Omega)$, the embedding $\iota_{\Omega}:|\D'(\Omega)|\ra\gs(\Omega)$
is smooth.
\end{proof}

\subsection{Colombeau ring of generalized numbers and evaluation of generalized
functions}

In this subsection we consider only the case of the special Colombeau
algebra $\gs(\Omega)$ since it is mostly studied in the literature. The
case of the full algebra can be treated analogously.

One of the main features of Colombeau theory is the possibility to
define a point evaluation of every CGF. Hence, it is natural to ask
whether this evaluation map 
\begin{equation}
\text{ev}:(u,x)\in\gs(\Omega)\times\otilc\mapsto u(x)\in\Rtil\label{eq:evalCGF}
\end{equation}
is a smooth map or not (see Section \ref{sub:Special-and-fullCA}
for the definitions of $\otilc$ and $\Rtil$). The diffeology we
consider on $\otilc$ and $\Rtil$ are the natural ones:
\begin{defn}
\label{def:diffStructureRtil}All the following are diffeological
spaces: 
\begin{enumerate}[leftmargin={*},label=(\roman*),align=left]
\item $\boldsymbol{R}_{M}:=\left(\R_{M}\prec\Coo(I,\R)\right)$ 
\item $\widetilde{\boldsymbol{R}}:=\boldsymbol{R}_{M}/\sim$ 
\item $\boldsymbol{\Omega}_{M}:=\left(\Omega_{M}\prec\Coo(I,\Omega)\right)$ 
\item $\widetilde{\boldsymbol{\Omega}}:=\boldsymbol{\Omega}_{M}/\sim$ 
\item $\widetilde{\boldsymbol{\Omega}}_{c}:=(\otilc\prec\widetilde{\boldsymbol{\Omega}})$ 
\end{enumerate}
\end{defn}
Note, e.g., that $d\In{U}\boldsymbol{\Omega}_{M}$ iff $d:U\ra\Omega_{M}$
and $d^{\vee}(-,\eps)\in\Coo(U,\Omega)$ for all $\eps\in I$.
\begin{thm}
\label{thm:evalCGF}The evaluation map \eqref{eq:evalCGF} is smooth.\end{thm}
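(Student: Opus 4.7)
The plan is to use the universal property of the quotient diffeology on $\widetilde{\boldsymbol{R}}$: given a plot $d=(d_1,d_2)\in_U \gs(\Omega)\times \widetilde{\boldsymbol{\Omega}}_c$, I show that $\mathrm{ev}\circ d$ is locally of the form $\pi\circ e$ for some smooth $e\colon V\to\boldsymbol{R}_M$. Fixing $u_0\in U$ and applying the definitions of the quotient diffeologies on $\gs(\Omega)$ and $\widetilde{\boldsymbol{\Omega}}$, on a common neighbourhood $V$ of $u_0$ I obtain lifts $\delta\in_V \Ems(\Omega)$ with $d_1|_V=\pi\circ\delta$ and $\xi\in_V \boldsymbol{\Omega}_M$ with $d_2|_V=\pi\circ\xi$. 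The candidate lift is
\[
e\colon V\to\R^I,\qquad e^\vee(v,\eps):=\delta^{\vee\vee}\bigl(v,\eps,\xi^\vee(v,\eps)\bigr),
\]
which is well-defined since $\xi^\vee(v,\eps)\in\Omega$ for every $v\in V$ and every $\eps\in I$.

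Smoothness of $e$ in $v$ reduces, because $I$ has the discrete diffeology, to checking that $e^\vee(-,\eps)\in\Coo(V,\R)$ for each fixed $\eps$; this is immediate from composing $\delta^{\vee\vee}(-,\eps,-)\in\Coo(V\times\Omega,\R)$ with $\xi^\vee(-,\eps)\in\Coo(V,\Omega)$. The identity $\pi\circ e=\mathrm{ev}\circ d|_V$ holds pointwise because $\pi(e(v))=[\delta(v)\circ\xi(v)]$ represents $d_1(v)(d_2(v))$ by the standard well-definedness of the evaluation of a CGF on a compactly supported generalized point. Thus, once $e$ is shown to factor through $\boldsymbol{R}_M$, the quotient diffeology on $\widetilde{\boldsymbol{R}}$ yields $\mathrm{ev}\circ d\in_U \widetilde{\boldsymbol{R}}$.

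The only delicate point is verifying that $(e^\vee(v,\eps))_\eps$ is moderate for every $v\in V$, which hinges on upgrading the lift $\xi$ from $\boldsymbol{\Omega}_M$-valued to $\Omega_c$-valued pointwise. For this I would observe that $\pi(\xi(v))=d_2(v)\in\otilc$ yields some $(y_\eps)\in\Omega_c$ with $(\xi^\vee(v,\eps))\sim(y_\eps)$; picking $K\Subset\Omega$ with $y_\eps\in K$ for $\eps$ small and $K'\Subset\Omega$ with $K\subset\mathrm{int}(K')$ and $d(K,\Omega\setminus K')>0$, the negligibility of $\xi^\vee(v,\eps)-y_\eps$ forces $\xi^\vee(v,\eps)\in K'$ eventually, so $\xi(v)\in\Omega_c$. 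The desired bound then follows from moderateness of $\delta(v)$:
\[
|e^\vee(v,\eps)|\le \sup_{x\in K'}|\delta^{\vee\vee}(v,\eps,x)|=O(\eps^{-N}).
\]

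The anticipated obstacle is precisely this fibrewise promotion of $\xi(v)$ into $\Omega_c$; once it is in hand, no uniformity in $v$ is required because $\boldsymbol{R}_M$ only asks for fibrewise moderateness together with fibrewise smoothness. Everything else amounts to bookkeeping with the quotient diffeologies on $\gs(\Omega)$, $\widetilde{\boldsymbol{\Omega}}$ and $\widetilde{\boldsymbol{R}}$, the discrete diffeology on $I$, and Cartesian closedness as exploited through the $(-)^\vee$-transpose.
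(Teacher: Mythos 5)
Your argument is correct and follows essentially the same route as the paper: locally lift both plots through the quotient diffeologies, form the composite lift $e^{\vee}(v,\eps)=\delta^{\vee\vee}\bigl(v,\eps,\xi^{\vee}(v,\eps)\bigr)$, verify smoothness in $v$ for each fixed $\eps$ (using the discrete diffeology on $I$), and conclude via the quotient diffeology on $\widetilde{\boldsymbol{R}}$. The only difference is that you make explicit the fibrewise moderateness check (promoting $\xi(v)$ to a net in $\Omega_{c}$), a point the paper's proof leaves implicit, relying on the standard well-definedness of point evaluation of CGF at compactly supported generalized points from \cite{GKOS}.
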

\begin{proof}
Let $a\In{U}\gs(\Omega)$ and let $b\In{U}\widetilde{\boldsymbol{\Omega}}_{c}$.
We need to prove that $\text{ev}\circ\langle a,b\rangle\In{U}\widetilde{\boldsymbol{R}}$.
For any fixed $u\in U$, by definition of the quotient diffeologies,
we can write $a|_{V}=\pi_{1}\circ\alpha$ and $i\circ b|_{V}=\pi_{2}\circ\beta$,
where $u\in V\in\tau_{U}$, $\alpha\In{V}\ems(\Omega)$, $\beta\In{V}\boldsymbol{\Omega}_{M}$,
$i:\otilc\hookrightarrow\widetilde{\Omega}$ is the inclusion, and
$\pi_{1}:\ems(\Omega)\ra\gs(\Omega)$, $\pi_{2}:\Omega_{M}\ra\widetilde{\Omega}$
are the projections. Hence, for any $v\in V$, we have $\left(\text{ev}\circ\langle a,b\rangle\right)(v)=\text{ev}\left(a(v),b(v)\right)=\text{ev}\left(\pi_{1}(\alpha(v)),\pi_{2}(\beta(v))\right)=\text{ev}\left(\left[\left(\alpha^{\vee}\right)^{\vee}(v,\eps,-)\right],\left[\beta^{\vee}(v,\eps)\right]\right)=\left[\left(\alpha^{\vee}\right)^{\vee}(v,\eps,\beta^{\vee}(v,\eps))\right]$.
Note that for any $\eps\in I$, $\left(\alpha^{\vee}\right)^{\vee}(-,\eps,-)\in\Coo(V\times\Omega,\R)$
and $\beta^{\vee}(-,\eps)\in\Coo(V,\Omega)$, so the restriction $\left(\text{ev}\circ\langle a,b\rangle\right)|_{V}$
can be written as an ordinary smooth function defined on $V$ composed
with the projection $\pi:\R_{M}\ra\Rtil$. Therefore, $\text{ev}\circ\langle a,b\rangle\in_{U}\widetilde{\boldsymbol{R}}$.
\end{proof}

\section{Conclusions and open problems}

We explore why the categories $\Fr$ of Frölicher spaces, $\Diff$
of diffeological spaces and $\Fgen$ of functionally generated (diffeological)
spaces work as good frameworks both for the classical spaces of functional
analysis and for the Colombeau algebras. On the one hand, there seem
to be few differences between $\Fgen$ and $\Fr$: we can say that
the former seems better than the latter because in $\Fgen$ we don't
have the problem of extending to the whole space locally defined functionals;
but in the latter, it is easier to work directly with globally defined
functionals when the $D$-topology of the space is unknown. On the
other hand, the usual counter-examples about locally Cartesian closedness
of $\Fr$ do not seem to work in $\Fgen$. Moreover, if compared to
$\Diff$, functionally generated spaces seem to be closer to spaces
used in functional analysis, where testing smoothness using functionals
is customary. On the other hand, Thm.~\ref{thm:CGASmooth} and Thm.~\ref{thm:evalCGF},
show that $\Diff$ can be considered a promising categorical framework
for Colombeau algebras. Some open problems underscored by the present
work are the following: 
\begin{itemize}
\item A clear and useful example of functionally generated space which is
not Frölicher and where locally defined functionals cannot be extended
to the whole space is missing.
\item The problem to show that $\Diff$ gives also a sufficiently simple
infinite dimensional calculus for the diffeomorphism invariant Colombeau
algebra (see \cite{GKOS}) remains open. In particular, we note that
the differentiable uniform boundedness principle (\cite[Thm.~2.2.7]{GKOS})
is used in \cite{GKOS} only to prove the analogy of Lem.~\ref{lem:S-T-smooth},
whereas the other results of \cite[Section~2.2.1]{GKOS} seem repeatable
in $\Diff$ without the need to know the calculus on convenient vector
spaces. 
\item The relationship between the locally convex topology and the $D$-topology
on $\D(\Omega)$ is only partially solved (see Cor. \ref{cor:D-TopFinerLCTop}).
\item The relationship between the space of Schwartz distributions $\D'(\Omega)$
and the smooth dual $\D(\Omega)'_{\text{s}}$ is only partially solved
(see Cor. \ref{cor:DandDsmooth}). 
\item The preservation of colimits from the category of smooth manifolds to $\Fgen$
is only partially solved.\end{itemize}

\end{document}